\documentclass[a4paper,12pt,oneside]{article}
\usepackage[utf8]{inputenc}
\usepackage[english]{babel}
\usepackage{amsthm}
\usepackage{amssymb}
\usepackage{amsmath}
\usepackage{hyperref}
\usepackage{mathrsfs}
\usepackage{tikz}
 \usepackage{xcolor}
\usepackage[color, notref, notcite]{showkeys}     
\definecolor{refkey}{gray}{.5}   
\definecolor{labelkey}{gray}{.5} 
\usepackage{pstricks}
\usepackage[nottoc]{tocbibind}

 \usepackage{tikz}

\usepackage{graphicx}
\graphicspath{{images/}}

\usepackage{longtable,geometry}
\usepackage{color}

\usepackage{mathtools} 
\newenvironment{brsm}{
  \bigl( \begin{smallmatrix} }{%
  \end{smallmatrix} \bigr)}

\usepackage{hyperref}

\geometry{a4paper, margin=1. in}
\newtheoremstyle{note}{12pt}{12pt}{}{}{\bfseries}{.}{.5em}{}
\title{\LARGE\textbf{Coexistence phenomena in the Hénon family}}
\author{Michael Benedicks and Liviana Palmisano}

\makeatletter
\newtheorem{theo}[equation]{Theorem}
\newtheorem{prop}[equation]{Proposition}

\newtheorem{defin}[equation]{Definition}

\newtheorem{rem}[equation]{Remark}
\newtheorem{cor}[equation]{Corollary}

\numberwithin{equation}{section}
\newtheorem{lem}[equation]{Lemma}

\usepackage{babel}

\newcommand{\Z}{{\mathbb Z}}
\newcommand{\R}{{\mathbb R}}

\newcommand{\Cd}{{{ C}^2}}

\newcommand{\Cuno}{{{\mathcal C}^1}}

\newcommand\xqed[1]{%
  \leavevmode\unskip\penalty9999 \hbox{}\nobreak\hfill
  \quad\hbox{#1}}
\newcommand\demo{\xqed{$\square$}}

\begin{document}
\maketitle
\author
\textcolor{blue}{}\global\long\def\sbr#1{\left[#1\right] }
\textcolor{blue}{}\global\long\def\cbr#1{\left\{  #1\right\}  }
\textcolor{blue}{}\global\long\def\rbr#1{\left(#1\right)}
\textcolor{blue}{}\global\long\def\ev#1{\mathbb{E}{#1}}
\textcolor{blue}{}\global\long\def\R{\mathbb{R}}
\textcolor{blue}{}\global\long\def\E{\mathbb{E}}
\textcolor{blue}{}\global\long\def\norm#1#2#3{\Vert#1\Vert_{#2}^{#3}}
\textcolor{blue}{}\global\long\def\pr#1{\mathbb{P}\rbr{#1}}
\textcolor{blue}{}\global\long\def\qq{\mathbb{Q}}
\textcolor{blue}{}\global\long\def\aa{\mathbb{A}}
\textcolor{blue}{}\global\long\def\ind#1{1_{#1}}
\textcolor{blue}{}\global\long\def\pp{\mathbb{P}}
\textcolor{blue}{}\global\long\def\cleq{\lesssim}
\textcolor{blue}{}\global\long\def\ceq{\eqsim}
\textcolor{blue}{}\global\long\def\Var#1{\text{Var}(#1)}
\textcolor{blue}{}\global\long\def\TDD#1{{\color{red}To\, Do(#1)}}
\textcolor{blue}{}\global\long\def\Cg#1{{\color{blue} #1}}
\textcolor{blue}{}\global\long\def\dd#1{\textnormal{d}#1}
\textcolor{blue}{}\global\long\def\eqdef{:=}
\textcolor{blue}{}\global\long\def\ddp#1#2{\left\langle #1,#2\right\rangle }
\textcolor{blue}{}\global\long\def\En{\mathcal{E}_{n}}
\textcolor{blue}{}\global\long\def\Z{\mathbb{Z}}
\textcolor{blue}{{} }

\textcolor{blue}{}\global\long\def\nC#1{\newconstant{#1}}
\textcolor{blue}{}\global\long\def\C#1{\useconstant{#1}}
\textcolor{blue}{}\global\long\def\nC#1{\newconstant{#1}\text{nC}_{#1}}
\textcolor{blue}{}\global\long\def\C#1{C_{#1}}
\textcolor{blue}{}\global\long\def\meas{\mathcal{M}}
\textcolor{blue}{}\global\long\def\cSpace{\mathcal{C}}
\textcolor{blue}{}\global\long\def\pspace{\mathcal{P}}

\begin{abstract} We study the classical Hénon family $f_{a,b}:(x,y)\mapsto
  (1-ax^2+y,bx)$, $0<a<2$, $0<b<1$, and prove that given an integer
  $k\geq 1$, there is a set of parameters $E_k$ of positive
  two-dimensional Lebesgue measure so that $f_{a,b}$, for $(a,b)\in
  E_k$, has at least $k$ attractive periodic orbits and one strange
  attractor of the type studied in \cite{BC2}. A corresponding
  statement also holds for the H\'enon-like families of
  \cite{MoraViana}, and we use the techniques of  \cite{MoraViana}
  to study homoclinic unfoldings  also in the case of the original
  H\'enon maps. The final main result of the paper is the existence, within the classical Hénon
  family, of a positive Lebesgue measure set of parameters whose
  corresponding maps have two coexisting strange attractors. 

\end{abstract}

\section{Introduction}

\subsection{History}
In $1976$, the French astronomer and applied mathematician M. H\'enon made a famous computer experiment where he numerically detected but did not rigorously prove the existence of a non-trivial attractor for a two-dimensional perturbation of the one-dimensional quadratic map, $f_{a,b}:\R^2\to\R^2$ defined by 
$$
f_{a,b}\left(\begin{matrix}
x\\
y
\end{matrix}\right)=\left(\begin{matrix}
1-ax^2+y\\
bx
\end{matrix}\right)
$$
with $a=1.4$ and $b=0.3$, see \cite{Henon}. Since then, several
studies, both numerical and theoretical, have been conducted with the
aim of understanding this family of maps which is now known as
\emph{Hénon family}. The complete understanding of Hénon maps is still
quite far from being achieved. 

In his experiments Hénon also verified that attractive periodic orbits do indeed occur for
other parameter values from the same family. In view of this and of the result of S. Newhouse, \cite{Newhouse1}, stating that periodic attractors are generic, there were no reason, at the time, to eliminate the possibility that the attractor observed by Hénon was just a periodic orbit with a very high period. 

However in $1991$,  L. Carleson and the first author  proved the
existence of the attractor observed by Hénon for a positive Lebesgue
measure set of parameter values near $a=2$ and $b=0$, see \cite{BC2}. More
precisely, in the paper it was shown that if $b>0$ is small enough, then for a
positive measure set of $a$-values near $a=2$, the corresponding maps
$f_{a,b}$ exhibit a strange attractor.

\medskip
To define what we mean by a {\it strange attractor} 
we first recall that a {\it trapping region} for a map $f$ is an open
set $U$ such that

$$
\overline{f(U)}\subset U.
$$

An {\it attractor} in the sense of Conley for a map $f$ which has a trapping region   is the set
$$
\Lambda=\bigcap_{j=0}^\infty {f^j(U)}=\bigcap_{j=0}^\infty \overline{f^j(U)}.
$$

The attractor is {\it topologically transitive} if there is a
point with a dense orbit. In \cite{BC2} it was proved for a positive
two-dimensional Lebesgue measure set of  parameters  ${\mathcal
  A}$ in the $(a,b)$ space, that there is a
point $z_0(a,b)$ such that $z_1=f_{a,b}(z_0)$ satisfies the Collet-Eckmann
condition\footnote{A quadratic map $q_a(x)=1-ax^2$ satisfies the Collet-Eckman
  condition if $|(q_a^j)'(1)|\geq Ce^{\kappa j}$ for all $j\geq0$ and
  some positive constants $\kappa$ and $C$.}, i.e. that there is a constant $\kappa>0$ such that 
$$
\left|Df^n(z_1)\begin{brsm}1\\0\end{brsm}\right|\geq e^{\kappa n},\qquad\text{for all}\ n\geq 0. 
$$
It is fairly easy to see that the attractor $\Lambda$ for this
set of parameters can be identified as $\overline{W^u(\hat{z})}$,
where  $\hat{z}$ is the unique fixed point of $f_{a,b}$ in the first
quadrant, \cite{BenedicksViana}. Moreover, the fact that the
Collet-Eckmann conditions are satisfied leads to topological
transitivity, see \cite{BC2}, and the combination of
$\Lambda=\overline{W^u(\hat{z})}$ and topological transitivity
makes it appropriate to call the attractor {\em strange}.

\medskip
The techniques used in \cite{BC2} are a non trivial generalizations of
the ones presented in \cite{BC1} by the same authors for the
one-dimensional quadratic family. Those techniques  opened the way for
the understanding of a new class of non-hyperbolic dynamical systems.  

Further results have been achieved for Hénon maps by using and
developing the techniques in \cite{BC2}. In \cite{MoraViana} the
results of \cite{BC2} are obtained for a general perturbation of the
family of quadratic maps on the real line, called Hénon-like
family. The  statistical properties, the existence of a
Sinai-Ruelle-Bowen (SRB) measure, exponential decay of correlation and
a central limit theorem were
studied in \cite{BenedicksYoung1} and \cite{BenedicksYoung2}.
Furthermore the metric properties of the basin of attraction of the strange attractor
was studied in \cite{BenedicksViana}. In that paper it was proven that
Lebesgue almost all points in the topological basin for the attractor
$$
B=\bigcup_{j=0}^\infty f^{-j}(U),
$$
are generic for the SRB measure. Here $U$ is the trapping region as above.

Other more recent approaches to generalizations of this class of
dissipative attractors were given by Wang and Young in
\cite{WangYoung1}, \cite{WangYoung2} and by Berger in \cite{Berger1}.

In the present paper in Theorem \ref{B}, we show that coexistence of periodic attractors
and strange attractors occur in the H\'enon family for a positive
Lebesgue measure set of parameters. Our proof is mainly based on the
techniques in \cite{BC2}. However the construction of the periodic
attractors is inspired by \cite{Th}, where H. Thunberg proved the
existence of attractive periodic orbits for one-dimensional quadratic
maps for parameters that accumulate on the ones corresponding to the
quadratic maps with absolutely continuous invariants measures of
\cite{BC1} and \cite{BC2}. A result similar to that of \cite{Th} has
been obtained for H\'enon maps in \cite{Ures}.

After the completion of this paper it was pointed out to the
authors by P.~Berger that there is an alternative approach to Theorem \ref{A}
using the method of Newhouse \cite{Newhouse1},\cite{Newhouse2}, in
particular the version of the Newhouse theory for one-dimensional
families of map presented in \cite{Ro}. The present approach is however
gives a different, more constructive, approach to the phenomena of
Newhouse. In particular Baire Category arguments are avoided. 

Furthermore this constructive method allows us to  we prove the existence of a
positive two-dimensional Lebesgue measure set of parameters in the
H\'enon family for which there exist two coexisting strange
attractors. This result is stated as Theorem \ref{D} and is the main
result of the present paper,


\medskip
The next section contains more details about our main results.

\subsection{Statement of the results}

We now present our main results. We first give the definition of
H\'enon-like families as in \cite{MoraViana}.

\begin{defin}\label{henonlike} An $a$-dependent
  one-dimensional parameter family of maps $F_a$ is called a  {\it
    H\'enon-like} family 
if

$$
F_a(x,y;b)=\begin{pmatrix}1-ax^2\\0
\end{pmatrix}+\psi(a,x,y;b),
$$
and we have the following properties:

\begin{itemize}
\item[(i)]
  $\psi$ satisfies the condition
  $$
  ||\psi||_{C^3}\leq K b^t.
  $$
\item[(ii)]  
Let  $A,B,C,\, D$ be the matrix element of 
$$
DF_a=\begin{pmatrix}A&B\\C&D
  \end{pmatrix},
$$  
 and assume  $A$, $B$, $C$, $D$, satisfies the conditions stated in
 Theorem 2.1 of \cite{MoraViana},
\item[(a)] $|A|\leq K, \sqrt{b}/K\leq |B|\leq K  \sqrt{b},
  \sqrt{b}/K\leq |C|\leq K \sqrt{b},\ b/K\leq |\det\ DF_a |\leq K b$,
  $||DF_aa||\leq K$ and $||DF_a^{-1}a||\leq K/b$.
\item[(b)] $||D_{(a,x,y)}A||\leq K$, $||D_{(a,x,y)}B||\leq
  K^{1/2+t}$, $||D_{(a,x,y)}C||\leq K^{1/2+t}$, $||D_{(a,x,y)}D||\leq
  K^{1+2t}$. Moreover $||D_{(a,x,y)}(\det DF_a)||\leq Kb^{1+t}$ and
  $||D^2F_a||\leq K$.
\item[(c)] $||D^2_{(a,x,y)}A||\leq Kb^t$, $||D^2_{(a,x,y)}B||\leq
   Kb^{1/2+2t}$, $||D^2_{(a,x,y)}C||\leq
   Kb^{1/2+2t}$, $||D^2_{(a,x,y)}D||\leq 
   Kb^{1+3t}$. Finally   $||D^2_{(a,x,y)}(\det DF_a)||\leq
   Kb^{1+2t}$ and $||D^3F_a||\leq Kb^t$.
  \end{itemize}
\end{defin}
\begin{rem}
  The original H\'enon family corresponds to
  $$
\varphi(x,y;b)=\sqrt{b}\begin{pmatrix}y\\x
\end{pmatrix}.
  $$
  
\end{rem}  

\begin{theo}\label{A} Suppose $F_a(.,.;b)$ is an $a$-dependent H\'enon-like family as
  in Definition \ref{henonlike}. Then there is a $b_0>0$ so that for
  all $k\geq 1$, and all $0<b<b_0$,  there is a set of $a$-parameters
  $A_{k,b}$ ( with fixed $b$) which has positive one-dimensional
  Lebesgue measure, i.e. $|A_{k,b}|>0$ and such that for all $a\in A_{k,b}$, $F_a(.,.;b)$  has at least $k$ attractive
   periodic orbits and at least one strange attractor of the type constructed
   in \cite{BC2} and \cite{MoraViana}.
  \end{theo}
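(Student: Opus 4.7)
The plan is to prove the theorem by induction on $j = 1, \ldots, k$, constructing a nested sequence of open parameter intervals $J_1 \supset J_2 \supset \cdots \supset J_k$ together with positive-measure subsets $E_j \subset J_j$, such that every $a \in J_j$ produces a map $F_a$ with at least $j$ hyperbolic attractive periodic orbits, and every $a \in E_j$ additionally produces a Hénon-like strange attractor in the sense of \cite{BC2} and \cite{MoraViana}. Setting $A_{k,b} = E_k$ then yields the conclusion since $|E_k| > 0$. The base case $j=0$ is exactly the Mora--Viana theorem applied to $F_a(\cdot,\cdot;b)$: it furnishes an initial interval $J_0$ of $a$-values in which the BC-type construction produces the positive-measure set $E_0$ of strange-attractor parameters.

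For the inductive step, I would pick $a^\ast \in E_j$, so that $F_{a^\ast}$ has a topologically transitive strange attractor $\Lambda = \overline{W^u(\hat z)}$ carrying an SRB measure of positive entropy, and hence containing hyperbolic periodic saddles of arbitrarily large period. A Newhouse-type transversality argument, together with the homoclinic-tangency analysis announced in the abstract for the H\'enon and H\'enon-like setting via the techniques of \cite{MoraViana}, produces a periodic saddle $p$ whose stable and unstable manifolds develop a quadratic homoclinic tangency at some parameter $\tilde a$ arbitrarily close to $a^\ast$, with period chosen so large that the new periodic orbit to be created is spatially disjoint from the $j$ previously obtained attractors. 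Unfolding this tangency by the Mora--Viana scheme produces, in any sufficiently small neighbourhood $U \subset J_j$ of $\tilde a$, two structures: (i) a subinterval $J'$ on which a saddle-node bifurcation has generated a hyperbolic attractive periodic orbit of $p$-adjacent period, which persists throughout $J'$ by hyperbolicity; and (ii) a Cantor-like subset $E' \subset U$ of positive one-dimensional Lebesgue measure consisting of Hénon-like strange-attractor parameters. Setting $J_{j+1} = J'$ and $E_{j+1} = E' \cap J_{j+1}$ completes the inductive step, since the $j$ previously constructed attractive orbits persist throughout the sub-interval $J_{j+1}$ by robustness of hyperbolic sinks and the new one increases the count to $j+1$.

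The principal obstacle is verifying the positive-measure intersection $|E' \cap J'| > 0$. The parameter-exclusion procedure of \cite{MoraViana} yields a Cantor set of good parameters that has positive density at the tangency value $\tilde a$, but the saddle-node subinterval $J'$ responsible for the new attractor is a definite one-sided neighbourhood of a specific parameter value, and one must check that a positive fraction of the Cantor set survives inside $J'$. This amounts to a careful geometric bookkeeping of the exclusion estimates relative to the saddle-node parameter, verifying compatibility of the two constructions taking place in the same unfolding. Once this compatibility is established, induction on $j$ goes through, and the final set $A_{k,b} = E_k$ has positive one-dimensional Lebesgue measure and consists of parameters for which $F_a$ has at least $k$ coexisting attractive periodic orbits together with at least one BC2/MV strange attractor.
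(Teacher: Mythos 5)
Your scheme extracts both the new sink and the new strange attractor from a single unfolding of one homoclinic tangency at $\tilde a$, and you correctly flag $|E'\cap J'|>0$ as the crux; but this is not a bookkeeping problem that sharper exclusion estimates can repair --- the two sets are disjoint by construction. The Palis--Takens/Mora--Viana renormalization at a quadratic tangency produces, for each return time $n$, a parameter window $I_n$ on which the return map is conjugate to a H\'enon-like family whose renormalized parameter $\bar\mu$ sweeps a fixed interval; the windows $I_n$ are pairwise disjoint and accumulate at the tangency parameter. Within one window, the saddle-node/sink values of $\bar\mu$ and the Benedicks--Carleson positive-measure set of strange-attractor values of $\bar\mu$ are disjoint subsets of the renormalized parameter line, so the corresponding $a$-sets are disjoint; and a sink window at level $n$ cannot meet strange-attractor parameters at level $n'\neq n$ because $I_n\cap I_{n'}=\emptyset$. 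Hence $E'\cap J'=\emptyset$, and the inductive step as you set it up cannot close.

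What is needed --- and what the paper does --- is a \emph{second, independent} tangency created inside the interval on which the new sink persists. The paper first constructs the sink directly rather than through a saddle-node: a long escape situation steers the critical curve $z_N(\tilde\omega_0'')$ back into a trapping region $\mathcal D_N$ around the critical point on which $F^{Nk}$ contracts (Section \ref{sink}), so the sink exists for \emph{every} $a$ in a whole subinterval $\tilde\omega_0''$. Then, still inside $\tilde\omega_0''$, a second critical point is captured on a nearby leaf of $W^u(\hat z)$ at a tuned vertical distance, followed through its bound periods to a new escape situation, and shown to produce a fresh quadratic tangency with $W^s(\hat z)$ (sections \ref{sec:capturing} and \ref{sec:tangency}). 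It is the unfolding of this second tangency --- taking place entirely within the parameter interval carrying the first sink --- that is renormalized to a new H\'enon-like family, to which one applies the sink construction again ($k$ times) and, only at the last step, the Mora--Viana selection to obtain the strange attractor. Your proposal is missing exactly this mechanism for manufacturing a new tangency subordinate to the sink interval. A secondary divergence: you invoke a Newhouse-type density-of-tangencies argument for a periodic saddle inside the attractor, whereas the paper is deliberately constructive and avoids thickness and persistence-of-tangencies arguments; also note that the paper builds the sinks first and the strange attractor last, which matters because the strange-attractor parameter set is nowhere dense and cannot serve as the set from which one perturbs to create the next tangency.
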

The method introduced to prove Theorem \ref{A} gives also the following result.
\begin{theo}\label{B} Suppose $F_a(.,.;b)$ is a H\'enon-like family as
  in Definition \ref{henonlike}. If $b_0>0$ is sufficiently small, then for all 
  $0<b<b_0$ and for all $a$ in some set $A_{\infty,b}$, $F_a(.,.;b)$ has infinitely many coexisting attractive periodic orbits (the
  Newhouse phenomenon). 
  \end{theo}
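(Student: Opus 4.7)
The plan is to derive Theorem \ref{B} from a localized version of the construction behind Theorem \ref{A}, combined with the Baire category theorem. For each $k \geq 1$ set
\[
S_{k,b} := \{a \in I : F_a(\cdot,\cdot;b) \text{ has at least } k \text{ hyperbolic periodic sinks}\},
\]
where $I$ is the working parameter interval on which the Benedicks-Carleson/Mora-Viana construction operates. Since a hyperbolic sink persists under $C^1$-small perturbations, each $S_{k,b}$ is open in $I$, so it suffices to prove that every $S_{k,b}$ is also dense in $I$; the Baire category theorem then yields that $A_{\infty,b} := \bigcap_{k\geq 1} S_{k,b}$ is residual in $I$, and every $a \in A_{\infty,b}$ is a parameter for which $F_a(\cdot,\cdot;b)$ carries infinitely many coexisting attractive periodic orbits.

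To establish density, I would revisit the proof of Theorem \ref{A} and verify that it is inherently local in $a$. The production of $k$ sinks there proceeds inductively: having already created $\ell$ sinks by successive small parameter perturbations, one locates a homoclinic tangency of a periodic saddle inside the hyperbolic part of the underlying BC2 attractor at a parameter arbitrarily close to the current one, and then unfolds this tangency on a sub-scale much smaller than the neighborhoods on which the previously built sinks persist. The total parameter displacement needed to produce $k$ sinks can therefore be made arbitrarily small, so that starting from any nonempty open subinterval $J \subset I$ the full $k$-step construction can be executed while remaining inside $J$. Since the parameter set delivered by Theorem \ref{A} restricted to $J$ has positive one-dimensional Lebesgue measure, one in particular obtains $S_{k,b} \cap J \neq \emptyset$ for every such $J$, which is exactly the density required.

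The main obstacle is justifying this localization rigorously. One must show that suitable homoclinic tangencies occur densely in $a$ within $I$ and that the corresponding unfoldings can be performed arbitrarily close to any prescribed parameter without disturbing the previously selected Collet-Eckmann structure or the already constructed sinks. In the Mora-Viana framework this ultimately rests on a thick-horseshoe mechanism: the stable and unstable foliations of the underlying hyperbolic set have sufficient thickness that, by an argument in the spirit of Newhouse, tangencies of periodic orbits of arbitrarily high period are dense in the relevant parameter windows. The delicate technical point is to interleave this density statement with the parameter exclusion procedure of \cite{BC2} and \cite{MoraViana}: one must verify that the parameters deleted at earlier stages of the selection never shrink the ambient interval below the scale needed for the next tangency unfolding, and that each successive sink can be produced on a scale compatible with all remaining selection steps.
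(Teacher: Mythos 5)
Your argument takes a genuinely different route from the paper, and it has a genuine gap at its central step. The paper's proof of Theorem \ref{B} is deliberately \emph{constructive}: it iterates the sink construction of Section \ref{sink} together with the tangency unfolding and renormalization of Section \ref{sec:tangency}, producing a nested chain of nonempty closed parameter sets $A_1\supset B_1\supset\dots\supset A_k\supset B_k\supset A_{k+1}\supset\dots$ (the pullbacks under the linear parameter renormalizations), and concludes by compactness that $\bigcap_k A_k\neq\emptyset$. No density statement and no Baire category argument is needed, because the theorem only asserts the existence of \emph{some} nonempty set $A_{\infty,b}$.

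Your route instead requires that $S_{k,b}$ be \emph{dense} in the working interval $I$, and this is where the proposal breaks down. First, the mechanism you invoke for density --- thickness of the stable and unstable foliations of the underlying hyperbolic set plus persistence of tangencies ``in the spirit of Newhouse'' --- is precisely what the paper states it avoids (``the methods avoid Baire category arguments, the Newhouse thickness criterium and the persistance of tangencies is not used''), and it is not merely a stylistic choice: in the strongly dissipative regime $0<b<b_0$ considered here the stable Cantor sets of the relevant horseshoes are extremely thin, so the Newhouse gap lemma does not apply to them directly; recovering thick horseshoes requires first renormalizing near a tangency, which is exactly the machinery one is trying to build, not a tool one may assume. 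Second, the localization claim --- that the full $k$-step construction of Theorem \ref{A} can be executed inside an \emph{arbitrary} nonempty open subinterval $J\subset I$ --- is asserted but not justified. The construction produces sinks only inside very particular subintervals $\tilde\omega_0''$ obtained from long escape situations (Lemma \ref{lem:long_escape_sit}), the capture of a homoclinic intersection (Lemma \ref{homoclinic}), and the return estimates of Lemma \ref{returnlemma}; nothing in the paper shows these can be initiated from every subinterval of $I$, and the parameter exclusions of the Benedicks--Carleson induction delete an open dense set, so density of $S_{k,b}$ in all of $I$ is a substantially stronger statement than anything proved (or needed) in the paper. To repair your argument you would either have to prove this density from scratch, or abandon Baire category in favour of the paper's nested-compact-sets argument.
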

Theorem \ref{A} and Theorem \ref{B} hold for the
original H\'enon family.
 \begin{theo}\label{C} 

        Consider the original H\'enon family $f_{a,b}$, $0<a<2$, $0<b<1$.

   \begin{itemize}
\item[(a)] There is a set of positive two-dimensional Lebesgue measure
  of parameters with at least $k\geq1$ attractive periodic orbits and
  one H\'enon-like strange attractor.
\item[(b)] There are parameters in the H\'enon family for
  which there are infinitely many attractive periodic orbits.
  \end{itemize}
 \end{theo}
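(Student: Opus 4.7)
The plan is to reduce Theorem \ref{C} to Theorems \ref{A} and \ref{B} by realizing the original Hénon family, up to a $b$-dependent smooth conjugacy, as a Hénon-like family in the sense of Definition \ref{henonlike}, and then converting the positive one-dimensional measure sets $A_{k,b}$ (at fixed $b$) into a positive two-dimensional measure set in the $(a,b)$-plane by a Fubini argument.

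Concretely, I would set $\Phi_b(x,y) = (x,\sqrt{b}\,y)$ and consider
$$
\tilde f_{a,b}(x,y) \;:=\; (\Phi_b^{-1}\circ f_{a,b}\circ \Phi_b)(x,y) \;=\; \bigl(1-ax^2+\sqrt{b}\,y,\ \sqrt{b}\,x\bigr),
$$
which matches the form announced in the remark after Definition \ref{henonlike} with $\psi(a,x,y;b) = \sqrt{b}(y,x)^T$. The entries of $D\tilde f_{a,b}$ are $A=-2ax$, $B=C=\sqrt{b}$, $D=0$, and $\det D\tilde f_{a,b}=-b$; the verification of conditions (a), (b), (c) of Definition \ref{henonlike} for appropriate $t$ and $K$ then reduces to elementary derivative estimates on the bounded $(a,x,y)$-window near $a=2$ relevant to the constructions of \cite{BC2} and \cite{MoraViana}. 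Since $\Phi_b$ is a smooth conjugacy for each fixed $b>0$, all dynamical conclusions of Theorems \ref{A} and \ref{B} for $\tilde f_{a,b}$ (attractive periodic orbits, existence of a Hénon-like strange attractor, the Newhouse phenomenon) transfer without change to $f_{a,b}$.

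For part (a), let $b_0>0$ be the constant furnished by Theorem \ref{A} for the family $\tilde f_{a,b}$, which is uniform in $b$ because the Hénon-like constants $K,t$ are. For each $0<b<b_0$, let $A_{k,b}$ be the positive one-dimensional Lebesgue measure set of $a$-values produced by that theorem, and define $E_k := \{(a,b) : 0<b<b_0,\ a\in A_{k,b}\}$. By Fubini's theorem,
$$
|E_k|_2 \;=\; \int_0^{b_0} |A_{k,b}|_1\,db \;>\;0,
$$
which gives the required planar set. Part (b) is an immediate consequence of Theorem \ref{B}: any point in $A_{\infty,b}$ for some $b<b_0$ provides parameters for which $f_{a,b}$ has infinitely many coexisting attractive periodic orbits.

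The main obstacle I anticipate lies in the Fubini step: for $|E_k|_2>0$ to follow, one needs $E_k$ to be Lebesgue measurable in $(a,b)$, which is not obvious from the statement of Theorem \ref{A} alone. I would address this by inspecting the construction of $A_{k,b}$ in the proofs of Theorems \ref{A} and \ref{B} and verifying that it is built from countably many conditions (bounded distortion, expansion along a critical approximation, binding, and avoidance of critical neighborhoods) that are jointly Borel in $(a,b)$, so that $E_k$ itself is Borel. Existence of the periodic attractors is in fact an open condition, which is automatic; the Collet-Eckmann condition defining the strange attractor is a countable intersection of open conditions on finitely many iterates of $Df_{a,b}$, which depend smoothly on $(a,b)$. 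Modulo this measurability bookkeeping, Theorem \ref{C} will follow cleanly from Theorems \ref{A} and \ref{B}.
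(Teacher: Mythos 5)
Your proposal is correct and follows essentially the same route as the paper, whose entire proof of Theorem \ref{C} is the one-line observation that the H\'enon family is a special example of a H\'enon-like family, so that Theorems \ref{A} and \ref{B} apply directly. You merely make explicit two points the paper leaves implicit: the rescaling $\Phi_b(x,y)=(x,\sqrt{b}\,y)$ that puts $f_{a,b}$ into the form of Definition \ref{henonlike} (already indicated in the remark following that definition), and the Fubini/measurability step needed to pass from the one-dimensional sets $A_{k,b}$ to a set of positive two-dimensional Lebesgue measure.
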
  
 
The existence of H\'enon and H\'enon-like maps in one-parameter
families with infinitely many sinks has already been established in
\cite{Ro}, \cite{GST} and \cite{GS}.
In difference to the previous approaches, the present methods of proof are
completely constructive. In particular, the methods avoid Baire category
arguments, the Newhouse thickness criterium  and the persistance of
tangencies is not used.

Our method allows also to obtain a stronger result about the coexistence of two chaotic, non-periodic attractors. The following can be considered as the main theorem of the paper.
\begin{theo} \label{D}There is a positive two-dimensional Lebesgue
  measure set of parameters ${\mathcal A}$, such that for $(a,b)\in{\mathcal A} $, the maps of the
  H\'enon family $f_{a,b}$ have two coexisting strange attractors.
\end{theo}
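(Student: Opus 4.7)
\medskip
\noindent\textbf{Proof plan for Theorem~\ref{D}.}

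The overall strategy is to produce the two coexisting strange attractors by two independent mechanisms acting on disjoint regions of phase space. The first attractor $\Lambda_{1}$ is a standard Benedicks--Carleson strange attractor, obtained from the parameter set $\mathcal{A}_{\mathrm{BC}}$ of positive two-dimensional Lebesgue measure constructed in \cite{BC2} near $(a,b)=(2,0)$. The second attractor $\Lambda_{2}$ is produced by applying the H\'enon-like machinery of \cite{MoraViana} (i.e.\ Theorem~\ref{A}) to a return map near an appropriately chosen homoclinic tangency, exactly as the same paper already does to produce the periodic attractors in Theorem~\ref{C}(a), except that in place of a sink we extract a full strange attractor from the induced family.

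First I would locate, for parameters $(a^{*},b^{*})\in\mathcal{A}_{\mathrm{BC}}$, a saddle periodic point $p=p(a,b)$ of some period $q$ whose stable and unstable manifolds develop a quadratic homoclinic tangency at some nearby parameter, and whose orbit lies in a compact region $U_{2}$ disjoint from a trapping region $U_{1}$ of the BC--attractor. The existence of such saddles with homoclinic tangencies in the H\'enon family (away from the BC attractor) is a consequence of the same abundance arguments that underlie \cite{MoraViana} and their adaptation to H\'enon made in Theorem~\ref{C}. Then, following the renormalization scheme of Mora--Viana, the $q$-th iterate of $f_{a,b}$ restricted to a small rectangle $R\subset U_{2}$ near the tangency, after a smooth $(a,b)$-dependent change of coordinates and rescaling, takes the form of a H\'enon-like family
\[
F_{\tilde a}(x,y;\tilde b)=\begin{pmatrix}1-\tilde a x^{2}\\0\end{pmatrix}+\psi(\tilde a,x,y;\tilde b),
\]
satisfying Definition~\ref{henonlike}. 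A genericity check on the unfolding of the tangency shows that the reparameterization $(a,b)\mapsto(\tilde a(a,b),\tilde b(a,b))$ is a local diffeomorphism with nonvanishing Jacobian.

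Now I would apply Theorem~\ref{A} (or rather its two-dimensional analogue established in the proof of Theorem~\ref{C}) to the induced H\'enon-like family: for all sufficiently small $\tilde b$ there is a one-dimensional positive measure set $A_{1,\tilde b}$ of parameters $\tilde a$ for which $F_{\tilde a}(\cdot,\cdot;\tilde b)$ has a strange attractor, and hence a two-dimensional positive measure set $\widetilde{\mathcal{A}}$ in $(\tilde a,\tilde b)$-space. Pulling back by the diffeomorphism, its preimage $\mathcal{A}_{2}\subset(a,b)$-space has positive two-dimensional Lebesgue measure, and for $(a,b)\in\mathcal{A}_{2}$ the map $f_{a,b}$ admits an invariant strange attractor $\Lambda_{2}\subset U_{2}$ disjoint from $\Lambda_{1}$.

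The main obstacle, and the crux of the argument, is to show that $|\mathcal{A}_{\mathrm{BC}}\cap\mathcal{A}_{2}|>0$ in the two-dimensional sense. The two selection procedures produce Cantor-like positive measure sets defined by quite different combinatorial exclusion conditions: $\mathcal{A}_{\mathrm{BC}}$ is governed by the dynamics of the original critical set of $f_{a,b}$ in $U_{1}$, while $\mathcal{A}_{2}$ is governed by the dynamics of the induced critical set of the renormalized map in $U_{2}$, which depends on entirely different orbit segments. These two dynamical constraints are essentially decoupled, so one expects both sets to have positive Lebesgue density at generic good parameters, and the intersection to be of positive two-dimensional measure by a Fubini/density argument. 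To make this rigorous I would run the two parameter exclusion schemes in parallel: at each inductive step, bad parameters for either attractor are removed, and bounded distortion along admissible curves (in the directions of the BC parameter selection and, transversally, in the direction controlling $\tilde a$) guarantees that only a small controlled fraction of the remaining parameters is deleted at each stage. This gives a non-empty positive-measure Cantor-like set $\mathcal{A}:=\mathcal{A}_{\mathrm{BC}}\cap\mathcal{A}_{2}$ of parameters for which $f_{a,b}$ has both $\Lambda_{1}$ and $\Lambda_{2}$ as coexisting strange attractors, proving the theorem.
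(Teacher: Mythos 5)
Your overall architecture differs from the paper's: you keep the original Benedicks--Carleson attractor and try to superimpose a second attractor obtained by renormalizing at a homoclinic tangency elsewhere in phase space. The paper instead manufactures \emph{both} attractors from renormalizations, at two \emph{simultaneous} quadratic tangencies associated with two critical points $z_0$ and $\hat z_0$ captured very close to each other on $W^u(\hat z)$ (Lemma \ref{lem:newcapture}); a dedicated $b$-derivative distortion analysis (Lemma \ref{pardist}, Lemma \ref{w-star-b-dist}) is used to alternately tune $a$ and $b$ so that the two tangencies occur at a common parameter $(a_*,b_*)$, and two-dimensionality then comes from openness in $b$ rather than from a Jacobian of the renormalization.

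The genuine gap in your proposal is the step you yourself flag as the crux: showing $|\mathcal{A}_{\mathrm{BC}}\cap\mathcal{A}_2|>0$. Two sets of positive Lebesgue measure need not intersect, and neither a Fubini nor a Lebesgue-density argument rescues this, because the two selections operate at incommensurate parameter scales: $\mathcal{A}_{\mathrm{BC}}$ is a Cantor set of positive measure in a macroscopic interval near $a=2$, while $\mathcal{A}_2$ lives inside an exponentially small window around the tangency parameter, reparameterized by $\overline{\mu}=\sigma^{2n}\mu+\cdots$. There is no a priori reason that this tiny window is centered at a density point of $\mathcal{A}_{\mathrm{BC}}$ (the tangency parameter itself typically violates the BC exclusion rules), so ``running the two exclusion schemes in parallel'' has no common reference interval against which both deletions are provably small. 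This is exactly the obstruction the paper's construction is designed to circumvent: because both tangencies unfold simultaneously and the two Palis--Takens parameter maps $\psi_1,\psi_2$ are \emph{linear} on the same dyadic interval $\omega_0$, the quantitative startup result (Lemma \ref{lem:startup}, exceptional set of measure $<\varepsilon_0|\omega_0|$ with $\varepsilon_0<\tfrac1{10}$) guarantees each selected set retains at least a $(1-\varepsilon_0)$-fraction of $\omega_0$, whence the intersection retains at least a $(1-2\varepsilon_0)$-fraction. Without an analogous quantitative statement tying both selections to one interval, your intersection claim does not go through.
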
 

Our results can be viewed as some steps in the Palis program, see \cite{Palis}, aiming to describe coexistence phenomena for dissipative surface maps. Other coexistence results has been obtained in e.g. \cite{BenedicksMartensPalmisano, Berger2, Palmisano}.

\paragraph{Acknowledgements.} The first author was supported by the
Swedish Research Council Grant 2016-05482. The
second author was supported by the Trygger Foundation, Project CTS
17:50 and the research was partially summorted by the NSF grant
1600554 and the IMS at Stony Brook University. The authors would like
to thank P. Berger, L. Carleson and J-P Eckmann for helpful
discussions. The project was initiated at Institute Mittag Leffler
during the program Fractal Geometry and Dynamics, September 04 --
December 15, 2017.

\bigskip

\section{Overview of results and methods on H\'enon and H\'enon-like maps}
In this section we collect definitions and constructions by \cite{BC2} and \cite{MoraViana} which will be used in the sequel.
We briefly review the construction of Collet-Eckmann maps in the
quadratic family and the H\'enon family of \cite{BC1}, \cite{BC2}, and
the corresponding construction in \cite{MoraViana}. For more details we refer to
the original papers.
\subsection{The one-dimensional case}\label{onedimensional-case}
\medskip
Let us first consider the quadratic family $q_a(x)=1-ax^2$  and we write
$\xi_j(a)=q_a^j(0)$, $j\geq 0$. We start with an interval $\omega_0=[a',a'']\subset
(0,2)$ and  very close to 2. We partition
$(-\delta,\delta)=\bigcup_{|r|\geq r_\delta} I_r$, where $I_r=(e^{-r},e^{-r+1})$,
$I_r=-I_r$ and $I_r=\bigcup_{\ell=0}^{r^2-1} I_{r,\ell}$, where the
intervals $I_{r,\ell}$ are disjoint and of equal length. The
definition is similar for negative $r$:s. We do an explicit preliminary
construction of the first {\it free return}  so that it satisfies
$$
\xi_{n_1}(\omega)=I_{r_\delta,\ell},
$$
i.e. a parameter interval $\omega$ is mapped by the parameter dynamics $a\mapsto
\xi_{n_1}(a)$ to a  parameter interval in the partition
$\{I_{r,\ell}\}$. Here $r$ is chosen so that $e^{-r}\geq e^{-\alpha
  n(\omega)}$, and therefore Assertion 4, (ii), in Subsection \ref{ss:twod} is satisfied. This
condition is called the basic assumption (BA) in \cite{BC2}.

We give a brief description of the constructions in \cite{BC1}, \cite{BC2}.
At the $n$:th stage of the construction, we have a partition
${\mathcal P_n}$ and for $\omega\in{\mathcal P}_n$, when $n=n_k$ is  a free return,
we have
$$
\xi_n(\omega)\subset I_r\cup I_{r-1}\qquad\text{if}\ r>0.
$$
(The case $r<0$ is analogous.)
We define the bound period at a free return as the maximum integer $p$
so that

\begin{equation}
|\xi_{n+j}(a)-\xi_j(a')|\leq e^{-\beta j} \qquad \forall a,a'\in\omega,\
\forall j\leq p.\label{bound-period}
\end{equation}

After the bound period there is a {\it free period} of length $L$, during which
the corresponding iterates are called free, and at time $n+p+L$ we
have a return, at which
$$
\xi_{n+p+L}(\omega)\cap (-\delta,\delta)\neq\emptyset.
$$
This corresponds to a new free return to an interval $I_r$, which can either
be {\it essential}, i.e. the image covers a whole
$I_{r,\ell}$-interval or it is contained in the union of two adjacent
such intervals. The latter case is called an {\it inessential} free
return.
If we have an essential return the part  of $\omega\in {\mathcal P}_{n-1}$, which is mapped to $(-e^{-\alpha n},e^{-\alpha
  n})$ is deleted and we define the partition ${\mathcal P}_n$ by
pulling back the intervals $\{I_{r,\ell}\}$ to the parts of $\omega$
that remain after deletions. The union of the partition elements of
the parameter space that
remain at time $k$ is written as  $A_k=\bigcup_{\omega\in{\mathcal
    P}_k}$. The numbers $\alpha$ and $\beta$ are
small and positive. In the one-dimensional case one can choose
$\alpha=\frac1{400}$ and $\beta=\frac{1}{100}$. 
Define $\rho_k=|r_k|$, $k=0,\dots,r_s$.  Then
$(\rho_0,\dots,\rho_s)$ is an itinerary, which essentially determine
the derivative expansion that from  free return time $n_k$ to free
time $n_{k+1}$ is always
\begin{equation}\label{eq:largedeviation}
\geq \frac{e^{-3\beta \rho_k}}{e^{- \rho_k}}.
\end{equation}

A combinatorial argument shows, see Section 2.2 in \cite{BC2}, that
there are {\it escape situations} for partition elements $\omega$  at
times $\tilde{E}(\omega)$. The definition of an escape situation is
somewhat arbitrary but let us define it as a pair
$(\omega,\tilde{E})$, $\omega\in {\mathcal P}_{\tilde{E}}$ which is
defined so that $\omega$, under the parameter dynamics, is mapped
to an interval of size $\geq\frac{1}{10}$ at time $\tilde{E}$.

The
escape time $\tilde{E}$ has a distribution depending essentially on the 
itineraries $(\rho_0,\rho_1,\dots,\rho_s)$ of the subintervals 
of $\omega\in{\mathcal P}_{n_0}$.
By Section 2.2 of \cite{BC2} we have
\begin{itemize}
	\item the total time $T$ spent in an itinerary $(\rho_0,\rho_1,\dots,\rho_s)$ satisfies
	$$
	T\sim \sum_{j=0}^{s} \rho_j
	$$
	\item  $z_{n_i}$, at the return times $n_i$, $i=1,2,\dots,s$, can be viewed as almost independent random variable,
	\item the distribution of the escape times after the parameter selection satisfies
	$$
	\left|\left\{a\in\omega_0\left|\right. E(a)>t\right\}\right|\leq C	\left|\omega_0\right|e^{-\gamma t}	
	$$
	with $\gamma,C>0$.
\end{itemize}
This is known as the large deviation argument.
\subsection{The two-dimensional case}\label{ss:twod}

By perturbing the quadratic family  interpreted as an endomorphism
$(x,y)\mapsto (1-ax^2,0)$, where $a$ is
close to 2, we obtain a  H\'enon-like map of the type given in Definition
\ref{henonlike}.

If the map is
orientation reversing it has a fixed
point $\hat{z}\approx (\frac12,0)$ in the first quadrant. For small $b$, the unstable eigenvalue
$\lambda_u$ is approximately equal to $-2$ and the product of the
stable and unstable eigenvalues $\lambda_u$ and $\lambda_s$, i.e.
$\lambda_u\cdot\lambda_s =\hat{d}$, where $\hat{d}=\det(DF_a(\hat{z}))$.

 One of the main new
ingredients in the two-dimensional theory is that the critical point 0
of the one-dimensional map in the $n$:th stage of the induction is replaced by a critical
set ${\mathcal C}_g$, $g\leq Cn/\log(1/b)$. There is also a special set of critical points $\Gamma_N \subset {\mathcal C}_g$
on which the induction is carried on, and which is increased as the
induction index $n$ grows. (Note that the critical set $\Gamma_N$ in
the construction is only changed for a special sequence $\{N_k\}$ of
times $n$. The induction on $n$ is done for $n$ satisfying $N_k\leq n
\leq N_{k+1}$.) In the case of H\'enon-like maps it is most
natural to define instead of the critical point, the critical value.
The unstable manifold
$W^u(\hat{z})$ of
the fixed point has a sharp turn close to $x=1$. The critical
value $z_1$ has the property that there is $\kappa>0$ so that 
\begin{equation}\label{CE-equation}
\left|DF^j(z_1)\begin{brsm}1\\0\end{brsm}\right|\geq e^{\kappa j}\qquad \text{\rm for all}\quad 0\leq j\leq n.  
\end{equation}
The first approximation of $z_1$ is defined as the tangency point
between the vector field defined by the most contracting direction of $DF(z)$ close to $(1,0)$. Successively the equation
\eqref{CE-equation} is verified by induction for higher and higher $n$
and this allows most contracting directions of higher orders to be
defined. This makes better and better approximations of the critical value. 
This allows us to define the image $z_2$ of the critical value $z_1$
under the maps $F$, and also the critical point $z_0$ as $z_0=F^{-1}(z_1)$. 
The critical point $z_0$ will play a crucial role in our
construction. Note that all this is defined for an interval $\omega\in{\mathcal
  P}_n$ and all points $a$ of $\omega$ have equivalent $z_0$, $z_1$
and $z_2$. An arbitrary point $a\in\omega$ can be used for the
definitions.

We now define for $a\in\omega$ the first generation $G_1$ of $W^u(\hat{z})$
as the segment of $W^u(\hat{z})$ from $z_1$ to $z_2$. We also make the
notation $W_1=G_1$ and inductively define $W_{k+1}=F_a(W_k)$ and then
$G_k=W_{k+1}\setminus W_k$ for $k\geq 1$.

The induction proceeds by using information of the  critical points
$\Gamma_N$  (and
corresponding critical values) defined on segments of $W^u(\hat{z})$ of generation $\leq g=CN/\log(1/b)$, where $C$
is a numerical constant. One can consider $\Gamma_N$ as the set of
``precritical points''. A succesive modification procedure at the times $N_k$
will make the
``precritical points'' converge to the final critical points. 

\medskip
We require the following: 

\smallskip
Consider a free return time $n$ of the induction, and for all
$\omega\in {\mathcal   P}_n$ all critical values $z_1$ associated with
$\Gamma_N$ satisfy  

\bigskip
{\it Assertion 4 of \cite{BC2},} equation (12b), p.42. in \cite{MoraViana}

There is a constant $\kappa>0$ so that 
\begin{itemize}
\item[(i)] $\left|DF_a^j(z_1) \begin{brsm}1\\0\end{brsm}\right|\geq e^{\kappa j}\qquad\forall j \leq n $;
  
\item[(ii)] $\text{dist}_h(F_a^j(z_1),\Gamma_N)\geq e^{-\alpha
    j}\qquad\forall j\leq n$. 
 \end{itemize} 
 The formal definition of $\text{dist}_h(F_a^i(z_0),\Gamma_N)$,
 denoted by $d_i$ in \cite{BC2}, is given in Assertion 1, p. 127,
 in that paper and this quantity at returns satisfies
 $$
3|z_i-\tilde{z}_0^{(i)}|\leq  d_i(z_0)\leq 5|z_i-\tilde{z}_0^{(i)}|,
$$
where $z_i$ is at returns, by construction located horizontally to its
{\it binding point}  $\tilde{z}_0^{(i)}\in \Gamma_N$.
The condition (ii) is called the Basic Assumption (BA) in \cite{BC1},
\cite{BC2}. Roughly speaking, a binding point is chosen at a suitable
horizontal location so that the splitting argument, and   the bound
period distorsion estimates of the corresponding $w_\nu^*$-vectors
will be valid, see Subsection \ref{ss:splitting} below.


\subsection{Splitting algorithm}\label{ss:splitting}

Now we recall  the splitting algorithm for expanded vectors as in \cite{BC2},
  and \cite{MoraViana} p. 40-41. Let $w_\nu=DF^\nu(z_0)\begin{brsm}1\\0\end{brsm}$,
  and we write
  $$
  w_\nu=E_\nu+w_\nu^*.
  $$
 $E_\nu$ corresponds to the part of $w_\nu$ that is in a folding
 situation, i.e. there are various terms in $E_\nu$ that come from a
 splitting at a previous return. In particular if $\nu$ is outside of
 all bound periods $w_\nu=w_\nu^*$.

 \medskip
 We now summarize an essential part of Assertion 4  concerning distorsion
 of the vectors $w_\nu^*$ during the bound period, which has  an
 analogous definition to that in the one-dimensional case given in \eqref{bound-period}.

\medskip
There are constants $C_0$ and $C$, such that for all critical
points $z_0\in\Gamma_N$
 
\begin{itemize}
 
\item[(a)] If $p$ is the binding time  for $\zeta_0$ to $z_0$
$$
C^{-1}\leq \frac{||w_\nu^*(\zeta_0)||}{||w_\nu^*(z_0)||}\leq C, \qquad
0\leq \nu\leq p.
$$

\item[(b)] Let  $z_0\in\Gamma_N$, let $\zeta_0$ and $\zeta_0'$ be
  two points bound to $z_0$ during time $[0,p]$ and let $n$ be the
  first free return $n\geq p$. Furthermore let $w_\nu^*(\zeta_0)$ and
  $w_\nu^*(\zeta_0')$ be the associated vectors of the splitting
  algorithm. We write the vectors in polar coordinates, where  $M_\nu(\cdot)$
  denotes the absolute value and $\theta_\nu(\cdot)$ the argument, and
  measure the  distance between the orbits using

  $$
  \Delta_i(\zeta_0,\zeta_0')=\max_{0\leq j\leq i}|\zeta_j-\zeta_j'|.
  $$
  Then there is a constant $C_0$ such that, if
  $$
  \sum_{j=1}^k \frac{\Delta_j}{d_j(z_0)}\leq \frac{1}{C_0},\qquad \text
  {and}\ k\leq \min(n,N),
  $$
  then if $\nu\leq k$
  \begin{equation}\label{eq:modulus}
  \frac{M_\nu(\zeta_0)}{M_\nu(\zeta_0')}\leq
  \exp\left\{C_0\sum_{j=1}^\nu \frac{\Delta_j}{d_j(z_0)}\right\},
  \end{equation}
  and

 \begin{equation}\label{eq:angle2}
  |\theta_\nu(\zeta_0)-\theta_\nu(\zeta_0')|\leq 2 b^{1/4}\Delta_\nu.
 \end{equation} 
\end{itemize}

Very similar estimates appear in Lemma 10.2, in \cite{MoraViana}.
Their estimate in the Modulus equation \eqref{eq:modulus} is better with the quantity
$$
\Theta_k=\Theta_k(\zeta_0,\zeta_0')=\sum_{s=1}^\nu b^{(s-\nu)/4} |\zeta_s-\zeta_s'|,
$$
instead of $\Delta_i(\zeta_0,\zeta_0')=\max_{0\leq j\leq i}|\zeta_j-\zeta_j'|$.

We have written \eqref{eq:angle2} with the constant $2b^{1/4}$ as in
\cite{MoraViana} instead of $2b^{1/2}$ as in \cite{BC2}  since our
estimates are required to work also in the more general setting of
H\'enon-like maps.


\subsection{Derivative estimates and $C^2(b)$ curves for H\'enon-like maps}
We also need at several places that uniform expansion of the $x$-derivative 
of the $n$:th iteration of a function $F(x;a)$ automatically gives a uniform comparasion of $a$ and
$x$-derivatives of the iterated function. In the one-dimensonal case
this is  formulated abstractly in Lemma 2.1 in \cite{BC2}. The
corresponding estimate in the two-dimensional case is \cite{BC2} lemmas
8.1 and 8.4 and \cite{MoraViana} Lemma 11.3, which we formulate as
a distorsion result for the $w_\nu^*$ vectors of the splitting algoritm.


\begin{lem}\label{par-phase-dist1}
We consider the critical orbit $z_\nu(a)$ as a function
of the parameter $a$. We denote its derivative with respect to $a$ by
$\dot{z}_\nu(a)$. Then the following holds

\medskip
For all $2\leq\nu\leq n$ and $a\in{\mathcal
  P}_{\nu-1}(\omega)\subset E_{\nu-1}(z_0)$ we have
\begin{itemize}

\item[(i)]  
$$
\frac{1}{100}\leq \frac{||\dot{z}_\nu(a)||}{||w_\nu^*(a)||}\leq 100.
$$
Moreover if $\nu$ is a free iterate then
\item[(ii)]
$|\text{\rm
  angle}(\dot{z}_\nu(a),w_\nu^*)|\leq b^{t/2}$.
\end{itemize}
\end{lem}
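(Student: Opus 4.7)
The plan is to prove both assertions by induction on $\nu$, tracking the ratio of $\dot{z}_\nu(a)$ to $w_\nu^*(a)$ through free iterates, bound periods, and returns. First, differentiating $z_{\nu+1}=F_a(z_\nu)$ in $a$ yields the fundamental recursion
$$
\dot{z}_{\nu+1}(a)=DF_a(z_\nu)\,\dot{z}_\nu(a)+(\partial_a F_a)(z_\nu),
$$
to be compared with $w_{\nu+1}=DF_a(z_\nu)\,w_\nu$ and with the modified recursion that the splitting algorithm of Subsection \ref{ss:splitting} imposes on the starred part $w_\nu^*$ in $w_\nu=E_\nu+w_\nu^*$. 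Iterating from $\nu=2$ gives the telescoped expression
$$
\dot{z}_\nu(a)=DF_a^{\nu-2}(z_2)\,\dot{z}_2(a)+\sum_{j=2}^{\nu-1}DF_a^{\nu-1-j}(z_{j+1})\,(\partial_a F_a)(z_j),
$$
the two-dimensional analog of the telescoped formula that underlies Lemma 2.1 of \cite{BC2} in one dimension. In the H\'enon-like setting $(\partial_a F_a)(x,y)=(-x^2,0)+O(b^t)$ is a nearly horizontal vector of size $O(1)$, so the first term dominates once the Collet-Eckmann bound (i) of Assertion 4 is applied, and the remaining sum contributes only a geometrically convergent correction.

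The base case and free iterates are handled directly. For $\nu=2$ one computes $\dot{z}_2\approx (\partial_a F_a)(z_1)\approx (-1,0)^T$, and $w_2^*=DF_a(z_1)\,w_1$ is of comparable magnitude and nearly horizontal, so $||\dot{z}_2||/||w_2^*||$ lies well inside $[1/100,100]$ with the angle between them of order $\sqrt{b}$. At a free iterate, both $\dot{z}_\nu$ and $w_\nu^*$ are transformed by $DF_a(z_\nu)$; the inhomogeneous term $(\partial_a F_a)(z_\nu)$ picked up by $\dot{z}_{\nu+1}$ is of size $O(1)$ and lies in the nearly horizontal cone, so by Collet-Eckmann ($||w_{\nu+1}^*||$ growing exponentially) its contribution to the ratio is an exponentially small correction, and its angular contribution is $O(b^{t/2})$ since $DF_a$ preserves the nearly horizontal cone and contracts its transverse direction by $O(b)$. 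Summing these corrections over a free block keeps both estimates intact.

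The main obstacle is to propagate the estimate across a bound period. At a return time $\mu$, the orbit $z_\mu,\ldots,z_{\mu+p}$ shadows the critical orbit of a binding point $\tilde{z}_0\in\Gamma_N$, so the splitting-algorithm distortion estimates \eqref{eq:modulus} and \eqref{eq:angle2} govern the growth of starred vectors along the shadowed orbit. The key idea is to apply these estimates not only to phase-space perturbations but also to the parameter perturbation, by identifying $\dot{z}_\mu/||\dot{z}_\mu||$ with an infinitesimal phase-space shift; by the induction hypothesis this direction lies within $b^{t/2}$ of the direction of $w_\mu^*$, so iterating \eqref{eq:angle2} through the $p$ bound iterates yields an angle bound of the same order at time $\mu+p$, while \eqref{eq:modulus} keeps the modulus ratio of $\dot{z}$ and $w^*$ bounded by a universal constant. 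After the bound period the splitting is refreshed at the next free return and the constants $1/100$ and $100$ are re-established with room to spare. This step is the content of Lemmas 8.1 and 8.4 of \cite{BC2} and Lemma 11.3 of \cite{MoraViana}, which our lemma repackages as a distortion statement for $\dot{z}_\nu$ against $w_\nu^*$.
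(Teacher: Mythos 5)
Your overall route is the one the paper intends: the paper gives no proof of Lemma \ref{par-phase-dist1} at all, deferring to Lemmas 8.1 and 8.4 of \cite{BC2} and Lemma 11.3 of \cite{MoraViana}, and your recursion $\dot{z}_{\nu+1}=DF_a(z_\nu)\dot{z}_\nu+(\partial_aF_a)(z_\nu)$, the telescoped sum, and the use of the splitting algorithm across bound periods is exactly the skeleton of those proofs. It also matches, almost verbatim, the argument the paper does write out in full for the parallel $b$-derivative statement, Lemma \ref{pardist}, so the comparison point for what a complete proof must contain is right there in Section 7.

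The genuine gap is the lower bound in (i). You dispose of the inhomogeneous terms by saying the first term dominates and the rest is a "geometrically convergent correction" that is "exponentially small" relative to the ratio. That is true term by term at the time it is created, but after propagation to time $\nu$ each term $DF_a^{\nu-1-j}(z_{j+1})(\partial_aF_a)(z_j)$ has norm comparable to $e^{-c''j}\|w_\nu\|$ (this is the lemma $\|Df^{n-m}(z_m)\|\le Ce^{-c''m}\|w_n\|$ quoted in Section 7), so the sum over $j$ is of the \emph{same order} as the leading term, not smaller. The bound $\|\dot z_\nu\|\ge\frac{1}{100}\|w_\nu^*\|$ therefore requires a non-cancellation argument: one must project each term onto $w_\nu$, show the coefficients $C_j(\nu)$ converge geometrically, and then verify that their sum is bounded away from zero — in \cite{BC2} and in the paper's proof of Lemma \ref{pardist} this is done by observing that the early vectors $(\partial_aF_a)(z_j)$ and their images are all essentially proportional to $(-1,0)$ with positive coefficients while the orbit lingers near the fixed point close to $(-1,0)$, so no cancellation can occur. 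Without this step the lower bound is unproved. A secondary caveat: during bound periods you invoke \eqref{eq:modulus} and \eqref{eq:angle2}, but those estimates of Subsection \ref{ss:splitting} compare $w_\nu^*$ at two nearby \emph{phase points} bound to the same critical point; comparing the parameter derivative with $w_\nu^*$ instead requires the operator-norm estimate on $Df^{n-m}(z_m)$ together with the angle control for images of near-horizontal vectors (as in Lemmas \ref{lemma6.3}--\ref{lemma6.4}), which is what \cite{BC2} Lemma 8.4 actually supplies. Your heuristic of identifying $\dot z_\mu/\|\dot z_\mu\|$ with an infinitesimal phase shift points in the right direction but is not literally licensed by the quoted estimates.
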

We also need a statement about distorsion for the tangent vectors of
the parameter dependent curves $a\mapsto z_\nu(a)$, which can be formulated as follows.

\begin{cor}\label{cor:pardist} There is a constant
  $C(K,\alpha,\beta,\delta)$, so that  if
  $\nu$ is a free return then if $\omega\in{\mathcal P}_{\nu-1}(z_0)$
  then for all $a,a'\in\omega$
$$
  \frac{||\dot{z}_\nu(a')||}{||\dot{z}_\nu(a)||}\leq C
\quad \text{\rm and}\quad \text{\rm angle}(\dot{z}_\nu(a'),\dot{z}_\nu(a))\leq 10 b^{1/4}.
$$
\end{cor}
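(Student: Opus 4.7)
The plan is to reduce the statement for the parameter tangent vectors $\dot z_\nu(a)$ to the already-established distortion estimates \eqref{eq:modulus} and \eqref{eq:angle2} for the splitting vectors $w_\nu^*$. By Lemma \ref{par-phase-dist1}(i), $||\dot z_\nu(a)||$ and $||w_\nu^*(a)||$ agree up to a factor $100$, and by Lemma \ref{par-phase-dist1}(ii), at free returns the angle between them is at most $b^{t/2}$. Hence if I can prove the analogous statement
$$
\frac{||w_\nu^*(a')||}{||w_\nu^*(a)||}\leq \tilde{C},\qquad |\theta_\nu(a)-\theta_\nu(a')|\leq C_1 b^{1/4}
$$
for all $a,a'\in\omega$, the ratio bound for $\dot z_\nu$ follows with an extra factor $100^2$, while the angle conclusion follows from a triangle inequality $\text{angle}(\dot z_\nu(a),\dot z_\nu(a'))\leq C_1 b^{1/4}+2b^{t/2}\leq 10\, b^{1/4}$ for $b$ sufficiently small.

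To obtain these bounds on $w_\nu^*$, I apply parts (a) and (b) of Subsection \ref{ss:splitting} with the two ``bound orbits'' being the phase-space orbits $\{z_j(a)\}_{j\leq\nu}$ and $\{z_j(a')\}_{j\leq\nu}$ emanating from a common critical point $z_0$. The required hypothesis is
$$
\sum_{j=1}^{\nu}\frac{\Delta_j(a,a')}{d_j(z_0)}\leq\frac{1}{C_0},
$$
and verifying this for $\omega\in{\mathcal P}_{\nu-1}$ is exactly what the partition construction delivers: at each free return $n_k\leq \nu$ the basic assumption gives $d_{n_k}\geq e^{-\alpha n_k}$, while deletion of the parameters mapped into $(-e^{-\alpha n_k},e^{-\alpha n_k})$ together with the essential-return combinatorics forces $\Delta_{n_k}$ to remain small compared to $d_{n_k}$. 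Between returns the bound-period definition \eqref{bound-period} makes $\Delta_j/d_j$ decay geometrically, and summing over the itinerary $(\rho_0,\dots,\rho_s)$ in the spirit of \eqref{eq:largedeviation} gives the desired bound with room to spare, so that \eqref{eq:modulus} and \eqref{eq:angle2} apply and yield exactly $\tilde{C}=e^{1/C_0}\cdot \text{const}$ and $C_1$ of order the diameter of the trapping region.

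The main obstacle is precisely this verification of $\sum_j \Delta_j/d_j\leq 1/C_0$ along an entire orbit up to time $\nu$ inside a fixed partition element. Conceptually this is the same mechanism that drives the large deviation argument recalled in Subsection \ref{onedimensional-case}: the partition refinement at essential free returns is engineered so that two parameter orbits in the same partition element cannot separate faster than the scale $d_j$ at which binding is measured. The additional wrinkle in the two-dimensional setting is that one must translate back and forth between parameter space and phase space via Lemma \ref{par-phase-dist1}(i) in order to close the estimate inductively, and this is where the geometric constants $K,\alpha,\beta,\delta$ all enter into the constant $C(K,\alpha,\beta,\delta)$.
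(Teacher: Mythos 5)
Your proposal is correct and follows exactly the route the paper intends: the corollary is stated as an immediate consequence of Lemma \ref{par-phase-dist1} (which converts $\dot z_\nu$ into $w_\nu^*$ up to a factor $100$ and an angle $b^{t/2}$) combined with the $w_\nu^*$-distortion estimates \eqref{eq:modulus} and \eqref{eq:angle2}, with the summability condition $\sum_j\Delta_j/d_j\leq 1/C_0$ guaranteed by the partition construction of \cite{BC2}, precisely as you describe. The paper gives no further detail, deferring to Lemmas 8.1 and 8.4 of \cite{BC2} and Lemma 11.3 of \cite{MoraViana}, so your reconstruction — including your identification of the verification of the summability condition as the real content — matches the intended argument.
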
  

For the construction of two strange attractors, Theorem \ref{C}, we
also need the distorsion control of the $b$-derivatives given in Lemma
\ref{pardist} below.

In several places, in particular for parameter dependent curves and
pieces of unstable manifolds,  it is relevant that the corresponding
curves segments are $C^2(b)$-curves which in the setting of the
H\'enon-like maps of \cite{MoraViana}, has the following definition.

\begin{defin}\label{C2bcurve}
A curve $\gamma(x)=(x,h(x))$, $x_1\leq x\leq x_2$ is called a
$C^2(b)$-curve if the curve is $C^2$, and there is a constant $C$ so
that $|h'(x)|\leq Cb^t$ and  $|h''(x)|\leq Cb^t$ for $x_1\leq x \leq x_2$. The constant $t>0$
appears in the definition of the H\'enon-like maps.
  
\end{defin}  

\subsection{Stable and unstable manifold}

We also need some geometric information on the attractor.  A reference
is \cite{MoraViana}, Section 4,  but we will also need  two 
quantitative statements on the stable and unstable manifolds of the
fixed point formulated in lemmas \ref{stablemanifold}, \ref{unstable} and \ref{lem:equidistsm} below.

\begin{lem}\label{stablemanifold}
Let $\gamma^s_a$, $a\in\tilde\omega_0$, be the first leg of the stable
manifold of $\hat z(a)$ pointing in the negative $y$ direction. Then
$\gamma^s_a$ at all points has slope bounded below by $K/\sqrt{b}$
where $K$ is a numerical constant. Moreover $\gamma^s$ has a $C^1$
dependence on $a$. Also the downwards pointing leg $\gamma^s_a$ of
$W^s(\hat{z})$ intersects $W^u(\hat{z})$ at a homoclinic point $\hat{z}'$.
\end{lem}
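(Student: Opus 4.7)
The plan is to prove separately the three assertions: the slope bound, the $C^1$-dependence on $a$, and the existence of a homoclinic point $\hat z'$.

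For the slope bound, I begin at the fixed point. Writing $DF_a(\hat z)=\begin{brsm}A&B\\C&D\end{brsm}$ and using the Hénon-like bounds of Definition \ref{henonlike}, the characteristic polynomial gives $\lambda_u=A+O(b)$ and $\lambda_s=(AD-BC)/A+O(b^{3/2})$, the latter of order $b$. The stable eigenvector $(1,m_s)$, determined by $(A-\lambda_s)+Bm_s=0$, therefore has $|m_s|\geq |A|/(2|B|)\geq K_1/\sqrt b$, where I use that near $\hat z\approx(\tfrac12,0)$ the coefficient $|A|\approx 2a\hat x$ is bounded away from zero. To propagate this to all of $\gamma^s_a$ I introduce the stable cone field $C^s(z)=\{(u_1,u_2):|u_1|\leq K_2\sqrt b\,|u_2|\}$, and a direct computation with the bounds on $A,B,C,D$ shows that on any region where $|A|$ is bounded below, $DF_a(z)^{-1}$ maps $C^s(F_a(z))$ strictly into $C^s(z)$. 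A bootstrap closes the argument: $\gamma^s_a$ starts at $\hat z$ with tangent in $C^s$ and hence cannot deviate from the vertical line $x=\hat x$ by more than $O(\sqrt b\,|y-\hat y|)$, so it stays in the region $|x|>\delta$ throughout the leg, yielding the slope bound everywhere. The $C^1$-dependence on $a$ is then standard: $\hat z(a)$ is $C^\infty$ by the implicit function theorem applied to $F_a(\hat z)=\hat z$ (invertibility of $DF_a(\hat z)-I$), the eigenvectors are $C^\infty$, and the parametric stable manifold theorem provides $C^1$-dependence of the local and then of the global stable manifold under finite iterates of $F_a^{-1}$.

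For the homoclinic intersection, I analyze the third generation $G_3=F_a^2(G_1)$ of $W^u(\hat z)$. The first generation $G_1$ runs along a $C^2(b)$-curve from $\hat z$ to the critical value $z_1\approx(1,0)$, and $F_a$ preserves the $C^2(b)$-structure away from the critical set, so $G_3$ is also a $C^2(b)$-curve with $G_3(0)=F_a^2(\hat z)=\hat z$. A leading-order computation, which for the pure Hénon case with a natural parametrization $\xi\in[0,1]$ yields $y(\xi)=b(\tfrac12-\xi-\tfrac12\xi^2)+O(b^{1+t})$ and $x(\xi)=\tfrac12+2\xi-\xi^2-2\xi^3-\tfrac12\xi^4+O(b)$, shows that $y(\xi)$ decreases monotonically from $\hat y$ through $0$ down to $-b$, while the endpoint $G_3(1)$ lies near $(-1,-b)$. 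Parametrizing $\gamma^s_a$ as $\{(h(y),y):y\leq\hat y\}$ with $|h'|\leq K_2\sqrt b$ and setting $\phi(\xi)=x(\xi)-h(y(\xi))$, one finds $\phi(0)=0$, $\phi'(0)=x'(0)-h'(\hat y)\,y'(0)=2-O(b^{3/2})>0$, and $\phi(1)=-1-\hat x+O(b)<0$. The intermediate value theorem produces $\xi^*\in(0,1)$ with $\phi(\xi^*)=0$, giving the homoclinic point $\hat z'=G_3(\xi^*)\in W^u\cap\gamma^s_a$ on the downward leg, with transversality (and hence $\hat z'\neq\hat z$) following from the opposing slopes $O(b)$ versus $\geq K_1/\sqrt b$.

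The main obstacle is the last step: controlling $G_3$ as a genuine $C^2(b)$-curve across its full traverse for the Hénon-like family rather than for the explicit Hénon model, and ruling out spurious cancellations in $\phi$ caused by the $O(b^t)$ perturbations. Both are handled by the bound-period distortion estimates recalled in Subsection \ref{ss:splitting} (compare \cite{MoraViana}, Section 4), which guarantee that the $C^2(b)$-structure persists under the first three iterates and that the signs of $\phi(0^+)$ and $\phi(1)$ are robust to the Hénon-like perturbation.
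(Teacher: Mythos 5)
Your slope bound and $C^1$-dependence arguments follow essentially the same route as the paper: the eigenvector computation at $\hat z$ giving slope $\sim 2a\hat x/|B|\sim 1/\sqrt b$, followed by invariance of a vertical cone under $DF_a^{-1}$ on the region where $|A|$ is bounded below. The paper uses the cone $|s-s_0|\le |s_0|/10$ and grows the leg by explicit backward iterates $\Gamma_{n+1}=F_a^{-1}(\Gamma_n)$; that backward iteration also supplies the \emph{extent} of the leg, which your ``bootstrap'' quietly assumes (the paper notes explicitly that the size of the initial local leg is not controlled, which is why it iterates backward — for your purposes the leg only has to reach $O(\sqrt b)$ below $\hat y$ in the normalized coordinates, so this is a small repair). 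Where you genuinely diverge is the homoclinic point: the paper extends $\gamma^s_a$ backward until it has a steep subcurve of length $\geq C\hat y b^{-1}$, so that the stable leg must cross the nearly horizontal early generations of $W^u$ near $(-\hat x,\hat y)$ — that is the specific $\hat z'$ used later in Section \ref{sec:capturing}. You instead keep the stable leg short and push $W^u$ forward to $G_3$, locating a crossing by the intermediate value theorem near $x\approx\hat x$ on the branch of $W^u$ returning from the fold. Both produce a legitimate homoclinic point on the downward leg, so your argument proves the lemma as stated; just be aware that the paper's subsequent constructions use the point near $(-\hat x,\hat y)$, not yours.

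One concrete flaw in your writeup: $G_3$ is \emph{not} a $C^2(b)$-curve across its full traverse. Your own parametrization has $x'(\xi)$ changing sign near $\xi\approx 0.4$, i.e.\ $G_3$ contains the fold at the critical value, where by Lemma \ref{unstable} the curve is an approximate parabola with large curvature rather than a nearly horizontal graph; so the ``main obstacle'' you identify is stated incorrectly, and the bound-period distortion estimates of Subsection \ref{ss:splitting} are not the relevant tool for the first two iterates of $G_1$ (no returns or bound periods have occurred yet). What the IVT step actually needs is only continuity of $G_3$ together with robustness of the signs of $\phi(0^+)$ and $\phi(1)$ under the $O(b^t)$ perturbation, and that follows from the explicit form of $F_a$ and Lemma \ref{unstable}. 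With that correction the argument goes through. Finally, note the coordinate mismatch: your slope bound lives in the normalized coordinates of Definition \ref{henonlike} (where $|B|\sim\sqrt b$ and the stable direction is nearly vertical), while your explicit formulas for $G_3$, with $y(\xi)=b(\cdots)$, are in the unnormalized H\'enon coordinates, where the stable direction has slope $\approx 2a\hat x$ rather than $1/\sqrt b$.
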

\begin{proof} We consider the orientation reversing case when the fix
  point $(\hat{x},\hat{y})$ satisfies $\hat{y}>0$.

  By the  $C^1$-version of the stable manifold theorem,
  there is a small segment of the $\gamma_s$-leg pointing down. Note
  that we do not have control of the size of this leg. It depends on
  $a_0$, the middle point of $\tilde{\omega}_0$, and $b$. By $C^1$ continuity of the stable manifold we can
  choose a sufficiently small segment $\Gamma_0$ so that its slope is close to
  the slope at the fixed point. As in \cite{MoraViana} the derivative of the
  map is defined as
  $$
  DF_a(x,y)=\left(\begin{matrix}
    A& B\\
    C& D
    \end{matrix}\right)(a,x,y).$$
  The stable direction at the fixed point has approximate slope $s_0$,
  where
  $$s_0=\frac{-2a\hat{x}}{B},
  $$
  and by continuity this is true also for points of $\Gamma_0$. Now
  define inductively
  $\Gamma_{n+1}=F_a^{-1}(\Gamma_n)$ for $n \leq n_0$, where $n_0$ is
  determined so that $(x,y)\in\Gamma_n$ for $n\leq n_0$ should satisfy $y\geq
  \frac{7}{8} \hat{y}$. Note that we have strong expansion of the
  inverse map $F_a^{-1}$ and $n_0$ is finite.

  Next we verify that the cone defined by
  $$
  |s-s_0|\leq\frac{1}{10}|s_0|
  $$
  is invariant under $DF_a^{-1}$. For this we use the derivative
  estimates of $A$, $B$, $C$, $D$ and the determinant $AD-BC$ in
  \cite{MoraViana}, Theorem 2.1. This will hold for the  sequence of curve segments
  $\{\Gamma_n\}$, $n\leq n_0$. The length of $\Gamma_{n_0}$, will
  be greater or equal to $\frac{1}{8}\hat{y}>0$.
We now do two final iterates and conclude that $\Gamma_{n_0+2}$ has
a subcurve with vertical slope $\geq K/\sqrt{b}$ and length $\geq
C\hat{y}b^{-1}$. It follows that we have the required homoclinic
intersection $\hat{z}'$, compare Lemma \ref{homoclinic}. \end{proof}

\begin{lem}\label{unstable} Consider a family of  H\'enon-like maps
  $F_{a}(.,.;b)$ which is area reversing.  Let a time $\nu$ be given and let
  a parameter interval of $a$-values, $\omega\in{\mathcal P_\nu}$. For
   $a\in\omega$   there is a critical point $z_0$ and a critical
  orbit $z_1$, $z_2$, $z_3$ located on $W^u(\hat{z})$. Let $\gamma_u$
  be the segment of  $W^u(\hat{z})$ from $z_2$ to $z_3$. Then for a
  suitable choice of $\delta_0$, the
  curve segment
  $$
  \gamma^u_1=\gamma_u\cap \{(x,y): x\geq -1 + \delta_0\}
  $$
  is an approximate parabola and the two segments
  $$
\gamma^u_1\cap \{(x,y): x\leq  1-\delta_0\}
  $$
  are two $C^2(b)$ curves.
\end{lem}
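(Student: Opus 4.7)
The plan is to realize $\gamma_u=G_2$ as $F_a(G_1)$ and then read off its parabolic shape from the almost--horizontal geometry of $G_1$. The underlying observation is that for the limiting endomorphism $(x,y)\mapsto(1-ax^2,0)$ any horizontal segment is mapped to a parabola opening leftward with vertex near $(1,0)$; for small $b$, $F_a$ acts on a $C^2(b)$-graph $(x,h(x))$ in essentially the same way, up to errors controlled by $\|\psi\|_{C^3}\leq Kb^t$ and the bounds on $B,C,D$ in Definition~\ref{henonlike}.

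First I would establish that $G_1$, the segment of $W^u(\hat z)$ from $z_1$ to $z_2$, is a $C^2(b)$-graph $(x,h(x))$ with $x$ ranging over an interval $[x_-,x_+]$ very close to $[-1,1]$ and $\|h\|_{C^2}=O(b^t)$. The tangent to $W^u$ at the critical value $z_1$ is nearly horizontal by the construction of Subsection~\ref{ss:twod}, and propagating this control one step forward by $F_a$, using the distortion estimates \eqref{eq:modulus} and \eqref{eq:angle2} on the splitting vectors $w_\nu^*$, keeps the slope and curvature of $G_1$ of size $O(b^t)$ throughout. Pushing $G_1$ forward by $F_a$ then gives parametric equations
\[
X(x)=1-ax^2+\psi_1(a,x,h(x);b),\qquad Y(x)=\psi_2(a,x,h(x);b),
\]
with $X(x)=1-ax^2+O(b^t)$ and $Y(x)$ having leading slope of size $|C|\asymp\sqrt{b}$ from Definition~\ref{henonlike}(a) and second derivative of order $b^{1/2+t}$ from (b). Eliminating $x$ from the first coordinate yields $X=1-a(Y/C)^2+O(b^t)$, which identifies $\gamma^u_1=\gamma_u\cap\{X\geq -1+\delta_0\}$ as an approximate parabola with vertex near $(1,0)$ and endpoints of order $|Y|\asymp\sqrt{b}$.

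Next, the further restriction $\gamma^u_1\cap\{X\leq 1-\delta_0\}$ removes the vertex neighborhood and splits the curve into two pieces, corresponding to $x>0$ and $x<0$. On either flank $|x|\geq c(\delta_0)>0$, so $dX/dx=-2ax+O(b^t)$ is uniformly bounded away from zero; the implicit function theorem gives $x$ as a $C^2$ function of $X$ on $[-1+\delta_0,1-\delta_0]$, and writing the flank as a graph $Y=g(X)$ one computes
\[
g'(X)=\frac{C+D\,h'(x(X))}{-2ax(X)+O(b^t)}+O(b^{1+t})=O(b^t),
\]
and, differentiating once more while using the $C^2$-bounds on $A,B,C,D$ in Definition~\ref{henonlike}(c), $g''(X)=O(b^t)$. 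This verifies Definition~\ref{C2bcurve}.

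The main obstacle is the first step: rigorously controlling the shape and regularity of $G_1$ along its entire length from $z_1$ to $z_2$. Near $z_1$ the almost--horizontal tangent is forced by the critical point construction, and near $z_2$ it is inherited from one iteration of $DF_a$, but in between the curve traces part of $W^u(\hat z)$ through a region where only the inductive Collet--Eckmann estimates are available. To close the argument one must invoke the splitting algorithm and the $w_\nu^*$-distortion of Assertion~4 of \cite{BC2}, combined with the $C^2(b)$-continuation arguments of \cite{MoraViana}, in order to propagate the graph and curvature bounds uniformly from the critical region all the way out to $z_2$.
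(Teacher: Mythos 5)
Your proposal is correct and follows essentially the same route as the paper's (sketched) proof: take the previous generation of $W^u(\hat z)$ as a nearly horizontal $C^2(b)$ graph, iterate once so that the quadratic part $1-ax^2$ folds it into a leftward-opening parabola with vertex near $(1,0)$, and check that away from the fold the two branches are $C^2(b)$ graphs. The paper simply cites \cite{MoraViana} (Section 7, formula (2), p.~30 for the input regularity and Lemma 9.3 for the curvature bound) where you carry out the slope and curvature computations explicitly, so the two arguments coincide in substance.
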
  
\medskip

{\it Sketch of proof.} For the first part of the proof we follow
\cite{MoraViana}, Section 7. In formula (2), p.30, they state that  
the unstable manifold restricted to $G_0\cap \{|x|\leq 1-\delta_0\}$ can
  be viewed as the graph $y(x)=y_\varphi(a,x)$ with
  $$
||y_\varphi||_{C^2}\leq \text{const}\, b^t,
  $$
If we iterate the unstable manifold once it follows that it folds to a
parabola. From a curvature argument, see \cite{MoraViana} Lemma 9.3, it
follows that the curve is $C^2(b)$.\demo

We will later need information on the structure of the stable
manifold of the fixed point $\hat{z}$.

\begin{lem}\label{lem:equidistsm} There is an approximate equidistribution of pieces of the stable
manifold $W^s(\hat{z})$, with a definite slope $s$, $|s|\geq
\text{\rm Const.}\ \delta$  that intersect $\{(x,y): |x|\geq
\delta\}$. The interspacing of the the legs of $W^s(\hat{z})$ is $\sim
  \frac{\pi}{2}\cdot\frac{1}{3\cdot 2^k}$.

\end{lem}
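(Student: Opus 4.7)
The plan is to reconstruct $W^s(\hat z)$ globally by iterating the initial downward leg $\gamma^s_a$ from Lemma \ref{stablemanifold} backwards under $F_a^{-1}$, and to count and space the resulting branches using the semiconjugacy between the underlying quadratic dynamics and the angle-doubling map.

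First I would verify that an appropriate slope-cone is forward-invariant under $DF_a^{-1}$. Using the sizes $|A|\leq K$, $|B|,|C|\asymp \sqrt b$, $|\det DF_a|\asymp b$ from Definition \ref{henonlike}, the inverse $DF_a^{-1}$ sends a direction of slope $s$ to one of slope approximately $(sA - C)/(sB - D)$, and a short computation shows that the set $\{|s|\geq K/\sqrt b\}$ is preserved. This is essentially the estimate already used in Lemma \ref{stablemanifold}; applied globally it shows that every branch of $F_a^{-k}(\gamma^s_a)$ that remains in the trapping region has slope $\geq K/\sqrt b$, hence in particular satisfies the slope bound $|s|\geq \text{Const}\cdot\delta$ demanded by the statement.

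Next, I would transfer the $2$-to-$1$ structure of $x\mapsto 1-ax^2$ to the two-dimensional setting. For $a$ close to $2$, this quadratic map is conjugate to angle-doubling $\theta\mapsto 2\theta$ on $[0,\pi]$ via $x=\cos\theta$, and preimages of a generic point under the $k$-th iterate are $2^k$ equidistributed angles. Lemma \ref{unstable} identifies $\gamma^u_1$ as a $C^2(b)$-approximation of the parabola, so the projection on the $x$-axis conjugates the restricted dynamics of $F_a$ on the attractor to the quadratic dynamics up to $O(b^t)$. Each of the $2^k$ transverse preimages of the homoclinic intersection $\hat z'$ under $F_a^k$ then produces exactly one branch of $F_a^{-k}(\gamma^s_a)$ crossing $\{|x|\geq \delta\}$. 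Parametrizing the $x$-interval by $x=\cos\theta$, the angular window of relevance has length close to $\pi/2$, and the $3\cdot 2^k$ approximately equidistributed preimages (with the factor $3$ accounting for the primary legs obtained from the initial backward iterates of $\gamma^s_a$ and of its symmetric partner) give the interspacing $\tfrac{\pi}{2}\cdot\tfrac{1}{3\cdot 2^k}$ claimed.

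The main obstacle is that the semiconjugacy with angle-doubling is only valid away from the critical set $\Gamma_N$, so branches of $F_a^{-k}(\gamma^s_a)$ that fall near $\Gamma_N$ during backward iteration could in principle deviate from the idealized $2^k$ count or cluster rather than equidistribute. To control this I would invoke the Basic Assumption (ii) of Subsection \ref{ss:twod} together with the bound-period distorsion estimates (\ref{eq:modulus})--(\ref{eq:angle2}) of Subsection \ref{ss:splitting}; these furnish a quantitative comparison between the perturbed and unperturbed preimage combinatorics, absorb the critical-set contribution into multiplicative constants, and preserve the equidistribution at every backward level up to the stated precision.
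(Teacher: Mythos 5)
Your overall strategy -- backward iteration of the stable leg, control of slopes via an invariant cone for $DF_a^{-1}$, and a conjugacy with a piecewise-linear/angle model to count and space the legs -- is in the same spirit as the paper's (very short) proof, and the extra two-dimensional bookkeeping you propose (cone invariance, critical-set control) goes beyond what the paper actually writes down. However, there is a genuine gap at the one step that produces the quantitative content of the lemma: your derivation of the factor $3$ in the interspacing $\frac{\pi}{2}\cdot\frac{1}{3\cdot 2^k}$. You obtain $2^k$ equidistributed $k$-th preimages from the doubling map and then insert an extra factor $3$ "accounting for the primary legs obtained from the initial backward iterates of $\gamma^s_a$ and of its symmetric partner." That bookkeeping would at best produce a factor $2$, and in any case it is not where the $3$ comes from. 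The $3$ is arithmetic: in the tent-map coordinate ($\xi\mapsto 1-2|\xi|$, conjugate to $x\mapsto 1-2x^2$ via $x=\sin\frac{\pi}{2}\xi$), the fixed point corresponding to $\hat z$ is $\xi=\frac13$, so its backward orbit lies on the lattice $\frac{\nu}{3\cdot 2^k}$; the denominator $3$ is the denominator of the fixed point's address, and the legs of $W^s(\hat z)$ sit over (the images under $\sin\frac{\pi}{2}(\cdot)$ of) these lattice points. Equivalently, in your $x=\cos\theta$ coordinates the relevant point is not generic: it is $\theta$ with $\cos\theta=\frac12$, i.e.\ a rational angle with denominator $3$, and it is this rationality that forces the preimage set onto the $\frac{1}{3\cdot 2^k}$-lattice.

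A second, smaller issue: the spacing $\frac{1}{3\cdot 2^k}$ is achieved by the \emph{union} of preimages of all orders up to about $k$, not by the $2^k$ preimages of order exactly $k$ (the latter have average spacing $\sim 2^{-k+1}$ and occupy only a fixed fraction of the lattice). Since $W^s(\hat z)$ consists of preimages of the local stable leg of every order, the union is the right object, but your count as written conflates the two. Fixing these two points reduces your argument to the paper's: preimages of $\frac13$ under the tent map are at $\xi_{\nu,k}=\frac{\nu}{3\cdot 2^k}$, and pushing forward by $\xi\mapsto\sin\frac{\pi}{2}\xi$ (whose derivative is $\frac{\pi}{2}\cos\frac{\pi}{2}\xi$) yields the claimed interspacing.
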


\begin{proof}
Consider the tent map $\xi\mapsto 1-2|\xi|$. It has a
  fixed point $\xi=\frac13$. The preimages of this fixed point are
  located at

  $$
  \xi_{\nu,k}=\frac{\nu}{3\cdot 2^k},\qquad \nu=-3\cdot 2^k+1,\dots,3\cdot 2^k-1
  $$
  The corresponding points for the quadratic map $x\mapsto 1-2x^2$ are
  given by $x_{\nu,k}=\sin\frac{\pi}{2}\xi_{\nu,k}$. This means that
  the interspacing of the legs of $W^s(\hat{z})$ is as required.
\end{proof}

\subsection{The Stable Foliation and its properties}\label{stablefoliation}

The stable foliation of order $n$ for different values of $n$ will play
an important role in the following, in particular in the capturing argument
in Section \ref{sec:capturing} and in the construction of the sink in
Section \ref{sink}. This construction of the stable foliation appears in \cite{BC2}, but
we will use the version in \cite{MoraViana}, Section 6.

We will need some lemmas about the expansion properties of the
maps. Because of the dissipative properties of the maps these will lead
also to the existence of  contractive vector fields and a
corresponding stable foliation. 

Let $F$ be a H\'enon-like map and denote by
$M^{\nu}(z)=DF^{\nu}(z)$.  Let $u_0$ be a
tangent vector of $W^{u}(\hat z)$ near $\hat z$. Let
$\zeta_0=(\xi_0,\eta_0)$ be a point on the unstable manifold,
satisfying  $|\xi_0|\geq\delta$ and for any $1\leq\nu\leq n$,
$\left\|M^{\nu}(\zeta_0)u_0\right\|\geq\kappa^{\nu}$. We get an
expansive behaviour of horizontal vectors, compare Corollary 6.2 in  \cite{MoraViana}.
Here $\kappa<1$ is allowed. We need a condition similar to partial
hyperbolicity relating  $b$ and $\kappa$ such as
$\sqrt{b}\leq\left(\kappa/{10 K^2}\right)^4$, compare 
 the hypothesis of Lemma \ref{lemma6.4} below.

\begin{lem} \label{lemma6.2} Assume that $\zeta_0=(\xi_0,\eta_0)$ is a point on the
  unstable manifold satisfying $|\xi_0|\geq \delta$ and

  \begin{equation}\label{eq:expansion}
\left\|M^{\nu}(\zeta_0)u_0\right\|\geq\kappa^{\nu},\qquad 1\leq\nu\leq n.
\end{equation}

Then all $1\leq\nu\leq n$ and for all unit vector $v_0$ with
$|\text{\rm slope}(v_0)|\leq\frac{1}{10}$, 
$$
\left\|M^{\nu}(\zeta_0)v_0\right\|\geq \frac{1}{2}\left\|M^{\nu}(\zeta_0)\right\|.
$$  
\end{lem}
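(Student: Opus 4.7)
The plan is to exploit the strong dissipativity of the H\'enon-like family together with the hypothesis \eqref{eq:expansion}: since $|\det DF_a|\leq Kb$ at every point, we have
$$
|\det M^\nu(\zeta_0)|\leq (Kb)^\nu,
$$
so the two singular values $\sigma_{\max}^{(\nu)}\geq\sigma_{\min}^{(\nu)}$ of $M^\nu(\zeta_0)$ satisfy $\sigma_{\max}^{(\nu)}\geq\kappa^\nu$ (because $u_0$ is essentially horizontal of unit length with image of norm $\geq\kappa^\nu$) and consequently
$$
\sigma_{\min}^{(\nu)}\leq\frac{(Kb)^\nu}{\kappa^\nu}.
$$
Under the assumed relation $\sqrt b\leq(\kappa/10K^2)^4$, the ratio $\sigma_{\min}^{(\nu)}/\sigma_{\max}^{(\nu)}$ is dominated by $(Kb/\kappa^2)^\nu$, which is extremely small. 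The singular value decomposition therefore yields a sharp splitting of the plane into a most-expanding direction $f^{(\nu)}$ and a most-contracting direction $e^{(\nu)}$, the latter with image of norm at most $\sigma_{\min}^{(\nu)}$.

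The first main step is to check that $e^{(\nu)}$ is nearly vertical. This is exactly the output of the iterative construction of the \emph{contractive vector field} in \cite{MoraViana}, Section 6, built by pulling back vertical directions under $DF^{-j}$: because each single-step Jacobian $DF_a$ sends nearly vertical vectors to nearly vertical vectors (as follows from the matrix bounds in Definition \ref{henonlike} once $B,C\sim\sqrt b$ and $|A|\leq K$), and because expansion of $u_0$ forces $M^\nu$ to have a single dominant direction, the orthogonal complement $e^{(\nu)}$ inherits slope bounded below by $K/\sqrt b$, equivalently
$$
|\langle e^{(\nu)},(1,0)\rangle|\leq C\sqrt b.
$$
This is the quantitative content that distinguishes the two-dimensional situation from a purely linear-algebra argument and is the step I expect to be the main obstacle, since it requires invoking the inductive control of the contractive foliation from Section 6 of \cite{MoraViana} in the exact range of $\kappa$ and $b$ provided.

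Once the geometry of $e^{(\nu)}$ is in hand, the rest is a short transversality computation. Any unit vector $v_0$ with $|\mathrm{slope}(v_0)|\leq 1/10$ is nearly horizontal, so its decomposition
$$
v_0=\alpha\, e^{(\nu)}+\beta\, f^{(\nu)},\qquad \alpha^2+\beta^2=1,
$$
satisfies $|\beta|\geq 1-O(\sqrt b)\geq 2/3$, because the small-slope cone around the horizontal is essentially perpendicular to the near-vertical $e^{(\nu)}$.

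Putting the pieces together,
$$
\|M^\nu(\zeta_0)v_0\|\geq |\beta|\,\sigma_{\max}^{(\nu)}-|\alpha|\,\sigma_{\min}^{(\nu)}
\geq\tfrac23\,\sigma_{\max}^{(\nu)}-\sigma_{\max}^{(\nu)}\cdot\Bigl(\frac{Kb}{\kappa^2}\Bigr)^{\!\nu},
$$
and the error term is $\leq \tfrac{1}{6}\sigma_{\max}^{(\nu)}$ for $b$ sufficiently small, yielding $\|M^\nu(\zeta_0)v_0\|\geq\tfrac12\|M^\nu(\zeta_0)\|$ as required. The only genuinely non-trivial input is the slope control on $e^{(\nu)}$; all other steps are one-line consequences of dissipativity and the singular value decomposition.
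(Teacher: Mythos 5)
The paper itself gives no proof of this lemma: it is imported verbatim from Corollary 6.2 of \cite{MoraViana}, and your singular-value argument is essentially the standard proof of that corollary. The skeleton is right: $\sigma_{\max}^{(\nu)}=\|M^\nu\|\geq\kappa^\nu$ from the hypothesis, $\sigma_{\min}^{(\nu)}\leq(Kb)^\nu/\kappa^\nu$ from dissipativity, hence $\sigma_{\min}^{(\nu)}/\sigma_{\max}^{(\nu)}\leq(Kb/\kappa^2)^\nu$, and the conclusion follows from the final transversality computation once one knows the most contracted direction $e^{(\nu)}$ is nearly vertical. That last computation is correct as written (indeed, since $M^\nu e^{(\nu)}\perp M^\nu f^{(\nu)}$ you even have the cleaner bound $\|M^\nu v_0\|\geq|\beta|\sigma_{\max}^{(\nu)}$ with no error term).

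The gap sits exactly in the step you flag as the crux. Your stated mechanism --- that ``each single-step Jacobian $DF_a$ sends nearly vertical vectors to nearly vertical vectors'' --- is false: $DF_a\begin{brsm}0\\1\end{brsm}=\begin{brsm}B\\D\end{brsm}$ with $|B|\sim\sqrt b$ and $|D|\lesssim b$, so vertical vectors are mapped to nearly \emph{horizontal} ones, and the contractive direction cannot be propagated forward this way. The correct source of near-verticality is the explicit one-step formula $\text{slope}\,e_1(z)=-A/B\approx 2ax/\sqrt b$ (recorded in Subsection \ref{stablefoliation}) combined with the angle estimate $|\text{angle}(e_1,e_\nu)|\leq(3K/\kappa)(Kb/\kappa^2)$ of Lemma \ref{contractive-field}. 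This is also precisely where the hypothesis $|\xi_0|\geq\delta$ --- which your argument never invokes, and which you implicitly replace by the irrelevant upper bound $|A|\leq K$ --- must enter: it guarantees the \emph{lower} bound $|A|\geq 2a\delta-O(b^t)$, hence $|\text{slope}\,e_1|\gtrsim\delta/\sqrt b$. Without it the statement fails: near a fold one has $A\approx 0$, the most contracted direction of $DF$ is nearly horizontal, and small-slope unit vectors can be strongly contracted. With these two corrections your outline closes.
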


We will also need Lemma $6.3$ in \cite{MoraViana} which implies  
estimates  of the norms and angles of the expanded vectors.
\begin{lem}\label{lemma6.3}
Let $\zeta'_0$ and norm $1$ vectors $u,v$ satisfying
$$|\zeta_0-\zeta'_0|\leq\sigma^n\text{ and } \left\|u-v\right\|\leq\sigma^n $$ with $\sigma\leq\left(\frac{\kappa}{10 K^2}\right)^2$, then 
\begin{itemize}
\item[(a)]$\frac{1}{2}\leq\frac{\left\|M^{\nu}(\zeta_0)u\right\|}{\left\|M^{\nu}(\zeta'_0)v\right\|}\leq 2$,
\item[(b)]$\left|\text{\rm angle}\left(M^{\nu}(\zeta_0)u, M^{\nu}(\zeta'_0)v\right)\right|\leq\left(\sqrt{\sigma}\right)^{2n-\nu}\leq \left(\sqrt{\sigma}\right)^{n} $.
\end{itemize}
\end{lem}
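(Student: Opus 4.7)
The plan is to prove (a) and (b) simultaneously by induction on $\nu$, controlling the relative deviation
$$\epsilon_\nu := \frac{\|u_\nu - v_\nu\|}{\|u_\nu\|}, \qquad u_\nu := M^\nu(\zeta_0)u, \quad v_\nu := M^\nu(\zeta_0')v.$$
The base case $\nu = 0$ gives $\epsilon_0 \leq \sigma^n = (\sqrt\sigma)^{2n}$ from the hypothesis, and both conclusions will follow from a bound of the shape $\epsilon_\nu \leq C\,(\sqrt\sigma)^{2n-\nu}$ propagated up to $\nu \leq n$.

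The two main ingredients for the inductive step are: (i) the lower bound $\|u_\nu\| \geq \tfrac12 \kappa^\nu$, obtained from Lemma \ref{lemma6.2} applied to $u_0$ together with the expansion hypothesis (the vector $u$ has small slope since it is $\sigma^n$-close to $u_0$, which is tangent to $W^u(\hat z)$); and (ii) the cocycle perturbation bound, which uses $\|DF\|\leq K$ and $\|D^2 F\|\leq K$ from Definition \ref{henonlike} and the trivial $|\zeta_\nu - \zeta_\nu'| \leq K^\nu\,\sigma^n$ to yield
$$\|DF(\zeta_\nu) - DF(\zeta_\nu')\| \leq K\,|\zeta_\nu - \zeta_\nu'| \leq K^{\nu+1}\sigma^n.$$
Combining these with the identity
$$u_{\nu+1} - v_{\nu+1} = DF(\zeta_\nu)(u_\nu - v_\nu) + \bigl(DF(\zeta_\nu) - DF(\zeta_\nu')\bigr)v_\nu$$
and the trivial $\|v_\nu\| \leq 2K^\nu$ (inductive form of (a)), a telescoping argument gives $\|u_\nu - v_\nu\| \leq C\,\sigma^n K^{2\nu}$. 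Dividing by $\|u_\nu\| \geq \tfrac12 \kappa^\nu$, and invoking the crucial smallness $\sqrt\sigma \leq \kappa/(10K^2)$ (equivalently $K^2/\kappa \leq 1/(10\sqrt\sigma)$),
$$\epsilon_\nu \leq 2C\,\sigma^n\left(\tfrac{K^2}{\kappa}\right)^\nu \leq \frac{2C}{10^\nu}\,(\sqrt\sigma)^{2n-\nu}.$$

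Part (a) then follows at once: $\epsilon_\nu \leq 1/2$ forces $\|v_\nu\|/\|u_\nu\|\in[1/2,2]$. For (b) I would bound the angle between the two vectors by
$$\bigl|\mathrm{angle}(u_\nu,v_\nu)\bigr| \;\leq\; \Bigl\|\tfrac{u_\nu}{\|u_\nu\|} - \tfrac{v_\nu}{\|v_\nu\|}\Bigr\| \;\leq\; 2\,\epsilon_\nu,$$
producing the stated bound $(\sqrt\sigma)^{2n-\nu}$ after absorbing the constant $2C/10^\nu$ in the factor $10$ appearing in the hypothesis. The main subtlety will be the synchronized induction: I must verify that $v$ at $\zeta_0'$ also remains in the small-slope cone throughout, so that Lemma \ref{lemma6.2} applies to it and the argument is symmetric in $u$ and $v$. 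This requires part (b) to hold simultaneously — the angular closeness (together with the initial small slope of $u$) propagates the cone condition to $v_\nu$ — which is why (a) and (b) must be proved together. A secondary concern is keeping the geometric accumulation of constants in the telescoping buried under the numerical factor $10$ in the hypothesis; this is handled by taking that factor as large as needed.
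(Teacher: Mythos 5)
The paper does not prove this statement at all --- it is imported verbatim from Lemma~6.3 of \cite{MoraViana} --- so there is no internal proof to compare against; your telescoping perturbation argument is essentially the standard proof of that lemma and is correct in substance. Two remarks. First, your worry about a ``synchronized induction'' to keep $v_\nu$ in the small-slope cone is unnecessary: the only expansion lower bound your argument actually uses is $\left\|M^{\nu}(\zeta_0)u\right\|\geq\tfrac12\kappa^{\nu}$, which follows from Lemma \ref{lemma6.2} applied at $\zeta_0$ alone (using that $u$ has small slope and \eqref{eq:expansion}); the upper bound $\left\|v_\nu\right\|\leq K^{\nu}$ is trivial from $\left\|DF\right\|\leq K$ and needs no induction, and the lower bound on $\left\|v_\nu\right\|$ is a consequence of $\epsilon_\nu\leq\tfrac12$, so (a) and (b) decouple and the estimate is a direct computation rather than a genuine induction. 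Second, the constant bookkeeping is the only delicate point: carrying out the geometric sum gives $\left\|u_\nu-v_\nu\right\|\leq 2K^{2\nu}\sigma^{n}$ for $K\geq 2$, whence $\epsilon_\nu\leq 4\cdot 10^{-\nu}\left(\sqrt{\sigma}\right)^{2n-\nu}$; after converting to an angle (which costs another factor close to $\pi$) the prefactor at $\nu=1$ is of order $1$ rather than strictly below $1$, so to land exactly on the stated bound one either uses the slack between $\left(\sqrt{\sigma}\right)^{2n-\nu}$ and $\left(\sqrt{\sigma}\right)^{n}$ or enlarges the numerical factor $10$ in the hypothesis on $\sigma$, as you indicate. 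This is exactly the kind of slack the original argument in \cite{MoraViana} relies on, so I do not regard it as a gap, but it should be stated rather than deferred.
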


Observe that, by Lemma \ref{lemma6.2}, the conclusions of Lemma \ref{lemma6.3} are verified for all unit vectors $u,v$ such that $\left\|u-v\right\|\leq\sigma^n$ and $|\text{slope}(u)|\leq\frac{1}{10}$.
Similarly, because by construction, $\zeta_0=(\xi_0,\eta_0)$,  with
$|\xi_0|>\delta$ is $\kappa$-expanding up to time $n$ and therefore
we can apply Lemma $6.4$ of \cite{MoraViana}, that in our setting becomes:
\begin{lem}\label{lemma6.4}
Let $\zeta'_0$ be such that $|\zeta_{\nu}-\zeta'_{\nu}|\leq\sigma^{\nu}$ for every $1\leq\nu\leq n$ with $\sqrt{b}\leq\sigma\leq\left(\kappa/{10 K^2}\right)^4$. Then 
\begin{itemize}
\item[(a)]$\frac{1}{2}\leq\frac{\left\|M^{\nu}(\zeta_0)u\right\|}{\left\|M^{\nu}(\zeta'_0)v\right\|}\leq 2$,
\item[(b)]$\left|\text{\rm angle}\left(M^{\nu}(\zeta_0)u, M^{\nu}(\zeta'_0)v\right)\right|\leq \left(\frac{K^2\sqrt{\sigma}}{\kappa}\right)^{\nu+1} $
\end{itemize}
for any $1\leq\nu\leq n$ and any norm $1$ vectors $u,v$ with
$|\text{\rm slope}(u)|\leq\frac{1}{10}$ and $|\text{\rm slope}(v)|\leq\frac{1}{10}$.
\end{lem}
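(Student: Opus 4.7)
The plan is to induct on $\nu$, proving (a) and (b) jointly by comparing the chains $u_\nu:=M^\nu(\zeta_0)u$ and $v_\nu:=M^\nu(\zeta'_0)v$ factor by factor. The $C^2$-bounds (b)-(c) of Definition \ref{henonlike} yield a Lipschitz estimate $\|DF_a(\zeta)-DF_a(\zeta')\|\leq CK|\zeta-\zeta'|$, so the two derivative matrices at step $\nu$ differ by at most $CK\sigma^{\nu-1}$. A preliminary step is to propagate the expansion from $\zeta_0$ to its shadow: using this Lipschitz bound together with \eqref{eq:expansion}, one verifies $\|M^\nu(\zeta'_0)u_0\|\geq(\kappa/2)^\nu$ for $\nu\leq n$, which makes Lemma \ref{lemma6.2} available at the base point $\zeta'_0$ as well, so $\|M^\nu(\zeta'_0)v\|\geq\tfrac12\|M^\nu(\zeta'_0)\|$ for every slope-$\leq 1/10$ unit vector $v$.

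Then I would use the one-step decomposition
$$
v_\nu = DF_a(\zeta_{\nu-1})\,v_{\nu-1}+\bigl(DF_a(\zeta'_{\nu-1})-DF_a(\zeta_{\nu-1})\bigr)\,v_{\nu-1},
$$
whose error term has norm at most $CK\sigma^{\nu-1}\|v_{\nu-1}\|$, while the main term satisfies $\|DF_a(\zeta_{\nu-1})v_{\nu-1}\|\geq\tfrac{\kappa}{2}\|v_{\nu-1}\|$ by Lemma \ref{lemma6.2}. Dividing, the relative per-step perturbation is of order $CK\sigma^{\nu-1}/\kappa$, so summing the geometric series $\sum_j \sigma^{j-1}$ into a telescoping bound for $\log(\|v_\nu\|/\|u_\nu\|)$ and using $\sigma\leq(\kappa/(10K^2))^4$ keeps it below $\log 2$, giving (a). For (b), the same decomposition applied to the normalized vectors yields a recursion
$$
\|\hat u_\nu-\hat v_\nu\|\leq \rho\,\|\hat u_{\nu-1}-\hat v_{\nu-1}\|+\frac{CK\sigma^{\nu-1}}{\kappa},
$$
with a contraction factor $\rho\lesssim K^2\sqrt b/\kappa^2$ coming from the dominated-splitting alignment of tangent vectors onto the expanding direction; iterating and using $\sqrt b\leq\sigma$ dominates the geometric sum by its last term and, after a routine bookkeeping of constants, simplifies to the claimed $(K^2\sqrt\sigma/\kappa)^{\nu+1}$.

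The main obstacle I expect is to maintain the horizontal-cone condition $|\text{slope}(v_\nu)|\leq 1/10$ at every intermediate step, since Lemma \ref{lemma6.2} must be invoked at each iterate along the $\zeta'$-orbit; this forces (a) and (b) to be fed back into each other inside a single induction rather than proved in sequence, and it is precisely here that the quartic smallness $\sigma\leq(\kappa/(10K^2))^4$ is genuinely needed, so that the per-step angular contraction beats the per-step perturbation with enough margin to keep the cone condition propagating along both orbits simultaneously.
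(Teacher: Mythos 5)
First, a point of comparison: the paper does not prove this statement at all --- it is quoted verbatim from Lemma 6.4 of \cite{MoraViana} (with the hypothesis $\sqrt{b}\le\sigma\le(\kappa/10K^2)^4$ imported unchanged), so there is no in-paper argument to measure yours against; you are in effect reproving a cited result. Judged on its own, your sketch has the right architecture (Lipschitz perturbation of the cocycle, joint induction on norm ratio and angle, geometric series controlled by the smallness of $\sigma$), but it hinges on a step that is not available. You assert that Lemma \ref{lemma6.2} gives the \emph{per-iterate} bound $\|DF_a(\zeta_{\nu-1})v_{\nu-1}\|\ge\tfrac{\kappa}{2}\|v_{\nu-1}\|$. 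Lemma \ref{lemma6.2} and the hypothesis \eqref{eq:expansion} only control the \emph{cumulative} products, $\|M^{\nu}(\zeta_0)v_0\|\ge\tfrac12\|M^{\nu}(\zeta_0)\|\ge\tfrac12\kappa^{\nu}$; the one-step factor $\|M^{\nu}(\zeta_0)u\|/\|M^{\nu-1}(\zeta_0)u\|$ can be as small as the co-norm of $DF$, i.e.\ of order $b/K^2$, and for Hénon-like orbits this genuinely happens at returns near the critical set, where expansion is only recovered over the subsequent bound period. With the true one-step factor $\lambda_{\nu-1}$ in the denominator, your per-step relative error $CK\sigma^{\nu-1}/\kappa$ becomes $CK\sigma^{\nu-1}/\lambda_{\nu-1}$ with $\lambda_{\nu-1}$ possibly of order $b\le\sigma^{2}$, so the telescoping sum for $\log\bigl(\|v_\nu\|/\|u_\nu\|\bigr)$ is no longer a convergent geometric series; the same defect enters the inhomogeneous term of your angle recursion, and at precisely those steps the projective contraction factor $\rho\approx|\det DF|/\|DFu\|^{2}$ is of order $1$ rather than $K^{2}\sqrt{b}/\kappa^{2}$, so the recursion does not self-correct.

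The standard repair (and what Mora--Viana actually do) is to avoid per-step estimates altogether: write
$M^{\nu}(\zeta_0)u-M^{\nu}(\zeta'_0)v=\sum_{j=1}^{\nu}M^{\nu-j}(\zeta_j)\bigl(DF(\zeta_{j-1})-DF(\zeta'_{j-1})\bigr)M^{j-1}(\zeta'_0)v+M^{\nu}(\zeta_0)(u-v)$,
bound each summand by $\|M^{\nu-j}(\zeta_j)\|\cdot K\sigma^{j-1}\cdot K^{j-1}$, and use Lemma \ref{lemma6.2} to convert $\|M^{\nu-j}(\zeta_j)\|$ into something of the form $\mathrm{const}\cdot\|M^{\nu}(\zeta_0)u\|/\kappa^{j}$, so that the strong \emph{future} expansion after step $j$ compensates for whatever weak growth occurred at step $j$ itself; the resulting series is geometric with ratio $K\sigma/\kappa$, and the quartic smallness $\sigma\le(\kappa/10K^2)^4$ is what absorbs the leftover powers of $K/\kappa$ (and, via $\sqrt{b}\le\sigma$, yields the angular width $(K^2\sqrt{\sigma}/\kappa)^{\nu+1}$). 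Note also that your preliminary step of transferring \eqref{eq:expansion} from $\zeta_0$ to $\zeta'_0$ is essentially part (a) of the lemma with $u=v=u_0$, so it cannot be established beforehand but must be folded into the same induction.
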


The above result combined with results at the end of Section 6 and
Section 7C in \cite{MoraViana} gives the following lemma on the
existence of the stable vector field $e^{(n)}$ and the corresponding
stable foliation which will be instrumental for the capture argument,
Section \ref{sec:capturing}, 
and also for the construction of the sink, Section \ref{sink}.
\begin{lem}\label{stable-foliation}
Let $\zeta_0$ satisfy equation \eqref{eq:expansion}
and let $s$ be a segment of $W^{u}(\hat z)$ centered in $\zeta_0=$  of length $\sigma^{2n}$. The stable vector field $e^{(n)}$ through $s$ can be integrated from $s$ to $G_1=F(G_0)$. Let $s_1$ be the arc of end points obtained on $G_1$, then 
\begin{itemize}
\item[(a)]$\text{\rm dist}\left(F^n(s),F^n(s_1)\right)=K\kappa^n$,
\item[(b)]$\left|\text{\rm angle}\left(M^{n}(\zeta'_0)u, M^{n}(\zeta''_0)v\right)\right|\leq\left(\frac{K^2\sqrt\sigma}{\kappa}\right)^4$,
\end{itemize}
where $\zeta'_0\in s$, $\zeta''_0\in s_1$, $u=\tau(\zeta'_0)$ and $v=\tau(\zeta''_0)$.
\end{lem}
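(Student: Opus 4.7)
The plan is to construct the stable vector field $e^{(n)}$ following the procedure of \cite{MoraViana}, Section 7C, then integrate it on a controlled neighbourhood and exploit the hyperbolic splitting to get both conclusions. First I would set up $e^{(n)}$ at each point $\zeta'_0$ of a small neighbourhood of $s$ as the most contracted direction of $DF^n(\zeta'_0)$, equivalently as the pull-back $DF^{-n}$ of an almost-vertical direction at $F^n(\zeta'_0)$. The hypothesis \eqref{eq:expansion} together with Lemmas \ref{lemma6.2} and \ref{lemma6.4} applied to the orbit of $\zeta_0$ gives that $e^{(n)}$ is well defined, close to vertical (slope $\gg 1/\sqrt{b}$), and Lipschitz in the base point; moreover by choosing the length $\sigma^{2n}$ of $s$ and the partial hyperbolicity condition $\sqrt{b}\leq (\kappa/10K^2)^4$, the vector field is controlled on all nearby points whose orbits remain at bounded distance from $\zeta_\nu$ up to time $n$. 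Integrating $e^{(n)}$ starting from each point of $s$ until the integral curve meets $G_1$ defines the arc $s_1$; the maximal length of these stable fibres is uniformly bounded because $G_1$ lies at $O(1)$ from $s$.

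For part (a), fix corresponding points $\zeta'_0\in s$ and $\zeta''_0\in s_1$ on the same stable fibre $\gamma^s$. Since $e^{(n)}$ is by construction the most contracted direction of $DF^n$, and since on any unit vector with slope $\leq 1/10$ we have $\|M^n\cdot\|\geq \tfrac12\|M^n\|$ by Lemma \ref{lemma6.2}, the contraction along $\gamma^s$ under $F^n$ is bounded by $|\det DF^n|/\|M^n u\|\leq (Kb)^n/\kappa^n$. The partial hyperbolicity hypothesis $\sqrt{b}\leq(\kappa/10K^2)^4$ makes this quantity no larger than $\kappa^n$ up to a constant, so integrating along $\gamma^s$ gives $|F^n(\zeta'_0)-F^n(\zeta''_0)|\leq K\kappa^n$, whence the claimed bound on $\mathrm{dist}(F^n(s),F^n(s_1))$.

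For part (b), note that $u=\tau(\zeta'_0)$ and $v=\tau(\zeta''_0)$ are tangent vectors to $C^2(b)$-curves (segments of $W^u(\hat z)$ and of $G_1=F(G_0)$), so both have slope $\leq C b^t\leq 1/10$. From the contraction estimate in the previous paragraph applied iteratively to each intermediate time $\nu\leq n$, one obtains $|\zeta'_\nu-\zeta''_\nu|\leq\sigma^\nu$ for the correct choice of $\sigma$ in the range $\sqrt{b}\leq\sigma\leq(\kappa/10K^2)^4$. Lemma \ref{lemma6.4}(b) then gives the angle bound $|\mathrm{angle}(M^n(\zeta'_0)u,M^n(\zeta''_0)v)|\leq(K^2\sqrt{\sigma}/\kappa)^{\nu+1}$; taking $\nu$ equal to a small fixed integer (or equivalently taking the crude bound with exponent $4$ which is the smallest we need for the subsequent capturing argument) yields the stated $(K^2\sqrt{\sigma}/\kappa)^4$.

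The main obstacle I expect is the first paragraph: verifying that $e^{(n)}$ really does extend from $\zeta_0$ to the entire transverse arc joining $s$ to $G_1$ with the required Lipschitz estimates. This is not automatic from \eqref{eq:expansion}, which is an assumption only on the orbit of $\zeta_0$; one must check that every orbit starting in the stable tube stays $\sigma^\nu$-close to $\zeta_\nu$ for $\nu\leq n$, so that Lemmas \ref{lemma6.3} and \ref{lemma6.4} can be applied point by point. The length $\sigma^{2n}$ for $s$ and the choice $\sqrt{b}\leq(\kappa/10K^2)^4$ are precisely tuned to make this bootstrap go through, and once it does, both conclusions (a) and (b) follow from the contraction/angle inequalities of Lemmas \ref{lemma6.2}--\ref{lemma6.4}.
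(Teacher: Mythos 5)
Your reconstruction matches what the paper actually does: the paper gives no proof of Lemma \ref{stable-foliation} but derives it from Lemmas \ref{lemma6.2}--\ref{lemma6.4} and Lemma \ref{contractive-field} together with the integration of the contractive field in Sections 6 and 7C of Mora--Viana, which is precisely the argument you outline (co-norm contraction $|\det DF^n|/\|M^n u\|\leq (Kb/\kappa^2)^n\leq\kappa^n$ along the stable fibre for (a), and Lemma \ref{lemma6.4}(b) for (b)). The only wording to tighten is in (b): you must apply Lemma \ref{lemma6.4}(b) with $\nu=n$, obtaining $\left(K^2\sqrt{\sigma}/\kappa\right)^{n+1}\leq\left(K^2\sqrt{\sigma}/\kappa\right)^{4}$ since the base is less than $1$, rather than ``taking $\nu$ equal to a small fixed integer.''
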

We also need Lemma 6.1. from \cite{MoraViana}. 
\begin{lem}\label{contractive-field}
If $e_{\nu}(z)$ is the most contractive direction, then for $1\leq\mu\leq\nu\leq n$
\begin{itemize}
\item[(a)]$\left|\text{\rm angle}(e_{\mu}(z),e_{\nu}(z))\right|\leq\left(\frac{3K}{\kappa}\right)\left(\frac{Kb}{\kappa^2}\right)^{\mu}$ ,
\item[(b)]
$\left\|Df^{\mu}(z) e_{\nu}(z)\right\|\leq\left(\frac{4K}{\kappa}\right)\left(\frac{K^2b}{\kappa^2}\right)^{\mu}$.
\end{itemize}
\end{lem}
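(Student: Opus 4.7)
The plan is to exploit the strong dissipativity of the H\'enon-like maps together with the expansion hypothesis used throughout the paper. First I would recall that $e_\nu(z)$ is characterized variationally: it is (up to sign) the unit vector minimizing $\|DF^\nu(z) v\|$, equivalently the right singular vector of $DF^\nu(z)$ associated with the smaller singular value $\sigma_{\min}^{(\nu)}$. Since $|\det DF|\leq Kb$ per iterate and some vector $u_0$ in the orbit has $\|DF^\nu u_0\|\geq \kappa^\nu$, we have $\sigma_{\max}^{(\nu)}\geq \kappa^\nu$, and from $\sigma_{\min}^{(\nu)}\sigma_{\max}^{(\nu)}=|\det DF^\nu|\leq (Kb)^\nu$ we obtain $\sigma_{\min}^{(\nu)}\leq (Kb/\kappa)^\nu$; the analogous bound holds for $\mu$.

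For (a), I would decompose $e_\nu = \cos\theta\, e_\mu + \sin\theta\, e_\mu^\perp$ with the goal of bounding $|\sin\theta|$. Because $e_\mu$ is nearly vertical (contractive in the strongly dissipative regime), $e_\mu^\perp$ has slope $\leq 1/10$ and Lemma \ref{lemma6.2} applies to it: $\|DF^\mu e_\mu^\perp\|\geq \tfrac12 \|DF^\mu\|\geq \tfrac12 \kappa^\mu$, and moreover $DF^\mu e_\mu^\perp$ is again nearly horizontal, so another application of Lemma \ref{lemma6.2} over the remaining $\nu-\mu$ iterates yields $\|DF^\nu e_\mu^\perp\|\geq \tfrac12 \kappa^\nu$. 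On the other hand, $\|DF^\nu e_\mu\|\leq K^{\nu-\mu}\cdot (Kb/\kappa)^\mu$. The minimality of $e_\nu$ then gives
\[
(Kb/\kappa)^\nu \geq \|DF^\nu e_\nu\|\geq \tfrac12 |\sin\theta|\, \kappa^\nu - |\cos\theta|\,K^{\nu-\mu}(Kb/\kappa)^\mu,
\]
from which, after rearranging and absorbing constants, one reads off $|\sin\theta|\leq (3K/\kappa)(Kb/\kappa^2)^\mu$, which is (a).

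For (b), I would just substitute this bound into the decomposition and apply $DF^\mu$:
\[
\|DF^\mu e_\nu\|\leq |\cos\theta|\cdot\|DF^\mu e_\mu\| + |\sin\theta|\cdot \|DF^\mu e_\mu^\perp\|\leq (Kb/\kappa)^\mu + (3K/\kappa)(Kb/\kappa^2)^\mu\cdot K^\mu,
\]
the second term dominating gives $\|DF^\mu e_\nu\|\leq (4K/\kappa)(K^2b/\kappa^2)^\mu$, i.e. (b).

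The main obstacle is verifying the expansion statement $\|DF^\nu e_\mu^\perp\|\geq \tfrac12 \kappa^\nu$ rigorously, since Lemma \ref{lemma6.2} requires the iterated vectors to maintain slope $\leq 1/10$. One must check that $e_\mu^\perp$ lies in the appropriate horizontal cone initially (which follows from the near-verticality of the contractive direction $e_\mu$ for strongly dissipative maps, a consequence of the bound $|B|,|C|\sim \sqrt{b}$ in Definition \ref{henonlike}) and that this cone is forward-invariant under $DF$ along the orbit of $z$ up to time $\nu$. This cone invariance is implicit in the construction leading to Lemma \ref{lemma6.2} and I would invoke it without reproving it, noting that it is precisely the form in which Lemma 6.1 of \cite{MoraViana} is used.
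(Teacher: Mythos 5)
The paper does not actually prove this lemma: it is quoted verbatim as Lemma 6.1 of \cite{MoraViana}, so there is no in-paper argument to compare with. Your general strategy (the variational/singular-value characterization of $e_\nu$, the bound $\sigma_{\min}^{(\nu)}\leq |\det DF^\nu|/\|DF^\nu\|\leq (Kb/\kappa)^\nu$, and deducing (b) from (a) via $\|DF^\mu e_\nu\|\leq\|DF^\mu e_\mu\|+|\sin\theta|\,\|DF^\mu\|$) is exactly the right framework, and your deduction of (b) from (a) is correct.

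However, your proof of (a) has a genuine quantitative gap. In the inequality
\[
\tfrac12 |\sin\theta|\,\kappa^\nu \leq (Kb/\kappa)^\nu + |\cos\theta|\,K^{\nu-\mu}(Kb/\kappa)^\mu,
\]
the second term on the right, divided by $\kappa^\nu$, is of order $(K/\kappa)^{\nu-\mu}\left(Kb/\kappa^2\right)^{\mu}$. Since $K\geq 1>\kappa$, the prefactor $(K/\kappa)^{\nu-\mu}$ grows without bound as $\nu-\mu\to\infty$, so the claimed bound $|\sin\theta|\leq (3K/\kappa)(Kb/\kappa^2)^\mu$ does \emph{not} follow: "absorbing constants" is not possible here because the constant depends on $\nu-\mu$. (A secondary issue: your lower bound $\|DF^\nu e_\mu^\perp\|\geq\tfrac12\kappa^\nu$ requires invoking Lemma \ref{lemma6.2} starting at the intermediate point $z_\mu$, where the expansion hypothesis \eqref{eq:expansion} is not given; it is only assumed at the base point.) The standard repair, and the actual proof in \cite{MoraViana} and in \cite{BC2} (Lemma 5.2), is to telescope: compare only \emph{consecutive} directions. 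Writing $e_{j+1}=\cos\theta_j\,e_j+\sin\theta_j\,f_j$ with $f_j=e_j^\perp$ the most expanded direction of $DF^j$, orthogonality of $DF^je_j$ and $DF^jf_j$ gives $\|DF^je_{j+1}\|\geq|\sin\theta_j|\,\|DF^j\|\geq|\sin\theta_j|\kappa^j$, while
\[
(Kb/\kappa)^{j+1}\geq\|DF^{j+1}e_{j+1}\|\geq\sigma_{\min}\bigl(DF(z_j)\bigr)\,\|DF^je_{j+1}\|\geq \frac{b}{K^2}\,|\sin\theta_j|\,\kappa^j,
\]
so $|\sin\theta_j|\leq \mathrm{Const}\cdot(K/\kappa)(Kb/\kappa^2)^j$; here the single factor $K^2/b$ lost by inverting one step is exactly compensated by the extra factor $Kb/\kappa$ gained in the minimal singular value. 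Summing the geometric series over $j=\mu,\dots,\nu-1$ yields (a) with a constant independent of $\nu$, and then your argument for (b) goes through.
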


We consider the integral curves of the vector field 
$$
\left(\begin{matrix}
\dot x\\\dot y
\end{matrix}\right)=e_1(z).
$$
Since $$DF(z)^{-1}=\frac{1}{\text{det}DF(z)}\left(
\begin{matrix}
D &-B\\ -C& A
\end{matrix}
\right)$$
and $A=-2ax+O(b^t)$, $C_1\sqrt b\leq |B|\leq C_2\sqrt b$, it is easy to see that 
$$
\text{slope }e_1(z)=-\frac{A}{B}\approx\frac{2ax}{\sqrt b}.
$$
\

As a conclusion we get that the integral curves of the stable vector
field $e^{(1)}$ are approximate parabolas. At the critical value
$z_1$, the expansive property  \eqref{eq:expansion} is valid and  we
obtain the following result, see Figure 1.

\begin{lem}\label{lem:stablefoliation}
Suppose that $F$ satisfies  the assumption of Lemma \ref{stable-foliation}. Then there is a quadrilateral containing 
the critical value, which is completely foliated with leaves that are
integral curves of $e_k(z)$ given that $k=\left[\frac{n}{10}\right]$.
\end{lem}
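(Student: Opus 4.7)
The plan is to first use Lemma \ref{stable-foliation} to produce a curvilinear quadrilateral $Q$ around the critical value $z_1$ foliated by integral curves of $e^{(n)}$, and then to replace $e^{(n)}$ by $e_k$ with $k=[n/10]$ using the exponentially small angle bound of Lemma \ref{contractive-field}(a).

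For the first step, I take $\zeta_0 = z_1$; since by construction $z_1$ satisfies the Collet--Eckmann condition \eqref{eq:expansion} up to time $n$, the hypotheses of Lemma \ref{stable-foliation} are met. Choosing the segment $s \subset W^u(\hat z)$ of length $\sigma^{2n}$ centered at $z_1$ and integrating $e^{(n)}$ across it until it meets the first generation $G_1$, one obtains a quadrilateral $Q$ bounded by $s$, by the arc $s_1 \subset G_1$ furnished by Lemma \ref{stable-foliation}, and by the two outermost integral curves of $e^{(n)}$. By construction $Q$ is foliated by $e^{(n)}$-leaves and contains $z_1$.

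For the second step, Lemma \ref{contractive-field}(a) gives
$$
|\text{\rm angle}(e_k(z), e_n(z))| \leq \frac{3K}{\kappa}\left(\frac{Kb}{\kappa^2}\right)^{k},\qquad k=[n/10],
$$
which under the standing assumption $\sqrt{b}\leq (\kappa/10K^2)^4$ is exponentially small in $n$. Since $e_k$ depends only on $k$ iterates of $DF$ and is a smooth vector field on a neighborhood of $z_1$, standard continuous dependence of ODE solutions on the vector field implies that the integral curve of $e_k$ through any point $\zeta \in s$ stays in an exponentially small $C^1$ neighborhood of the integral curve of $e^{(n)}$ through $\zeta$ throughout $Q$. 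Hence the $e_k$-integral curves through distinct points of $s$ do not cross inside a slightly shrunken sub-quadrilateral $Q' \subset Q$ still containing $z_1$, and they reach arcs on $G_1$ differing from $s_1$ by an exponentially small perturbation; this gives the required foliation of $Q'$ by integral curves of $e_k$.

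The main obstacle I anticipate is confirming that the angular error between $e_k$ and $e^{(n)}$, while exponentially small, is actually small compared to the geometric proportions of $Q$: the transversal spacing between adjacent leaves of $e^{(n)}$ along $s$ and the curvature bounds on $s$ and on $G_1$ coming from Lemma \ref{unstable} and from the $C^2(b)$ description of $W^u(\hat z)$. Since the relevant proportions of $Q$ involve only quantities polynomial in $n$ and $\sigma$ whereas the angular error is of order $(Kb/\kappa^2)^{n/10}$, this comparison does go through, but it requires care because the $e^{(n)}$-leaves themselves can have very large slope $\sim 2ax/\sqrt{b}$ near the critical value, so crossings would be possible a priori. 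The quantitative smallness of $(Kb/\kappa^2)^{n/10}$ relative to these geometric scales is the one nontrivial estimate.
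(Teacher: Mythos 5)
Your route is genuinely different from the paper's, and it has a gap at the decisive point. The paper does not obtain the $e_k$-foliation by perturbing an $e^{(n)}$-foliation coming from Lemma \ref{stable-foliation}; it builds the foliation bottom-up, by induction on the order $i$ of the contractive direction. One starts with the $e_1$-integral curve $\gamma_1$ through $z_1$, and at each step uses the contraction estimate of Lemma \ref{contractive-field}(b) (points of $\gamma_i$ shadow the critical orbit to within $(4K/\kappa)(K^2b/\kappa^2)^j$) to show that the horizontal neighborhood $\Omega_i$ of $\gamma_i$ of width $16K\left(Kb/\kappa^2\right)^i$ is a domain on which $e_{i+1}$ is defined and which its integral curves do not leave; one then restricts to $\Omega_{i+1}\subset\Omega_i$ and continues up to $i=k=[n/10]$, finally letting the base point range over a horizontal segment $s$ with $|s|\leq c^n$. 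The quadrilateral of the lemma is the exponentially thin region produced by this induction.

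The gap in your argument is the assertion that $e_k$ "is a smooth vector field on a neighborhood of $z_1$," so that continuous dependence of ODE solutions applies throughout $Q$. The direction $e_k(z)$ is only defined --- and, more importantly, only satisfies the angle and Lipschitz estimates needed to integrate it and to invoke Lemma \ref{contractive-field}(a) --- at points $z$ whose first $k$ iterates inherit the expansion \eqref{eq:expansion}; near the critical value this is a transversally exponentially thin set, of width of order $(Kb/\kappa^2)^k$ around the controlled leaves, not a fixed neighborhood of $z_1$. Consequently your claim that "the relevant proportions of $Q$ involve only quantities polynomial in $n$ and $\sigma$" is off: the base segment has length $\sigma^{2n}$ and the admissible transversal width is exponentially small, and determining that width is precisely the content of the lemma --- it is what the sets $\Omega_i$ accomplish. (Also, the leaf crossings you worry about are not the danger: integral curves of a single Lipschitz field cannot cross. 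What can fail is that $e_k$ ceases to be defined or controlled before a leaf traverses the quadrilateral.) If you first establish, by the paper's induction or an equivalent argument, that $e_k$ is controlled on a full transversal neighborhood of the leaf through $z_1$, then your comparison with $e^{(n)}$ becomes unnecessary.
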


\begin{proof}
This is a small variation of Lemma $5.8$ in \cite{BC2},
which we are going to pursue in the following with more detail. The idea
is to successively define smaller and smaller quadrilaterals $Q_n$
which are foliated by integral curves of the  most contractive
vector field $e_k(z)$ of $DF^k(z)$. 

We know that for the point $\tilde z_0=z_1$
$$
\left|DF(z_1)\begin{brsm}1\\0\end{brsm}\right|\geq e^{\tilde \kappa\nu},\qquad \nu=1,\dots, n.
$$
Moreover we will only use this estimate in the range $1\leq\nu\leq k$, $k=\left[\frac{n}{10}\right]$. We will inductively define a sequence $\left\{\gamma_{i}\right\}$ of integral curves of $e_{i}(z)$ through $z=z_1$. We start by defining $\gamma_1$ as the integral curve of $e_1(z)$ through $z_1$. We now pick $\tilde z_0=z_1$. Suppose $\gamma_{i}$ is defined and stretches from $y=-1$, $y=1$. Pick a point $\zeta_0\in\gamma_{i}$. Then by Lemma 6.1 $(b)$ in \cite{MoraViana}, 
$$
d(\zeta_j,\tilde z_j)\leq \left(\frac{4K}{\kappa}\right)\left(\frac{K^2b}{\kappa^2}\right)^j.
$$
Let $\zeta'_0$ be on the horizontal segment containing $\zeta_0$ at distance $\left(\frac{4K}{\kappa}\right)\left(\frac{Kb}{\kappa^2}\right)^{i}$,
\begin{eqnarray*}
d(\zeta'_j,\tilde z_j)&\leq & \left(\frac{4K}{\kappa}\right)\left(\frac{K^2b}{\kappa^2}\right)^j+5^j\left(\frac{Kb}{\kappa^2}\right)^{i}\\
&\leq &\left(\frac{8K}{\kappa}\right)\left(\frac{K^2b}{\kappa^2}\right)^j.
\end{eqnarray*}
Define 
$$
\Omega_{i}=\left\{ z \left| \right. \text{dist}_{\text{h}}(z,\gamma_{i})\leq 16 K\left(\frac{Kb}{\kappa^2}\right)^{i}\right\}.
$$
Then the integral curves of $e_{i+1}(z)$ are defined in $\Omega_{i}$ and do not leave $\Omega_{i}$. We define $\Omega_{i+1}$ by the restrictive condition 
$$
\Omega_{i+1}=\left\{ z \left| \right. \text{dist}_{\text{h}}(z,\gamma_{i+1})\leq 16 K\left(\frac{Kb}{\kappa^2}\right)^{i+1}\right\}.
$$
We proceed in this way by induction. Finally we can vary the point
$\tilde z_0$ on a horizontal line segment $s$ through $z$, providing
that $|s|\leq c^n$ (for a suitably choosen $c$).
\end{proof}

 \begin{figure}[h]
 \centering
 \includegraphics[width=0.9\textwidth]{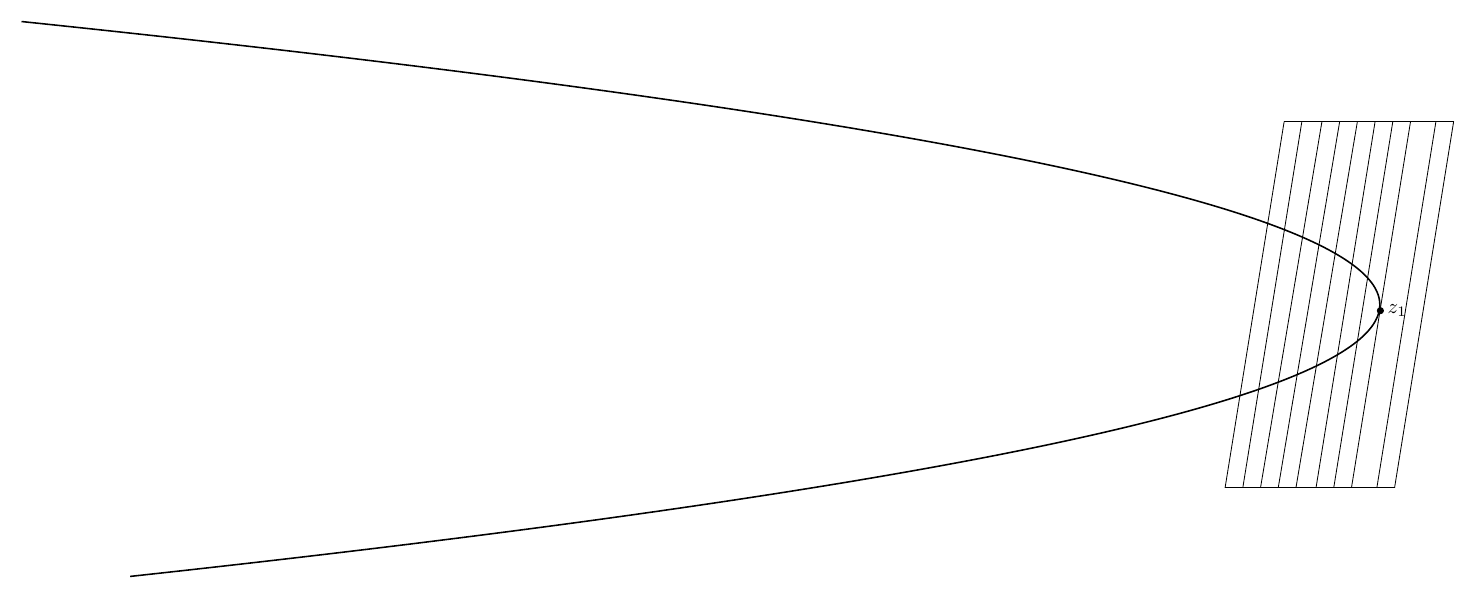}
 \caption{Stable foliation at the critical value}
 \label{Fig1New}
 \end{figure}








\section{Construction of a sink}\label{sink}
In the following we work in the H\'enon-like setting.
Let $z_0\in\Gamma_E$ be the critical point on the left leg of
$W^u(\hat z)$, see Subsection \ref{ss:twod}. One can choose $z_0$ uniquely for all
$a\in\omega_0\in\mathcal P_E$, see Section $5$ in \cite{MoraViana} or
Section $6$ in \cite{BC2}. We nox fix $E_0$ to be such that
$z_{E_0}(\omega_0)$ is in an escape situation as defined in the end of
Subsection \ref{onedimensional-case}. 
\subsection{Construction of a long escape situation}\label{sec:long_escape_situation}
The aim of this section is to prove that long escape situations
occur. In these situations we can guide the dynamics to behave in the
direction we wish, in particular, we can create attractive periodic orbits.

\begin{defin}
We say that $z_{E}(\omega)$, $\omega\subset\mathcal P_E$,  is in a long escape situation at time $E$ if $z_{E}(\omega)$ is a $C^2(b)$ curve\footnote{See Definition \ref{C2bcurve}} such that $$\pi_1 z_{E}(\omega)\supset\left[\frac{3}{8},\frac{5}{8}\right],$$ where $\pi_1$ is the projection on the first coordinate, i.e. if $\gamma(t)=(\gamma_1(t),\gamma_2(t))$ then $\pi_1\gamma(t)=\gamma_1(t)$.
\end{defin}
\begin{lem}\label{lem:long_escape_sit}
There exist $\tilde\omega_0\subset\omega_0$ and a time $E$ such that $z_{E}(\tilde\omega_0)$ is in a long escape situation.
\end{lem}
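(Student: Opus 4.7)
\emph{Proof plan.} My plan is to start from the escape curve $z_{E_0}(\omega_0)$, which by construction is a $C^2(b)$ curve of diameter at least $\tfrac{1}{10}$, and to iterate $F_a$ forward for a bounded number of further steps until the horizontal projection of the image sweeps across $[3/8,5/8]$. The required $\tilde\omega_0\subset\omega_0$ will then be the preimage of the portion of the image that lies over $[3/8,5/8]$, and $E$ will be $E_0+m$ for a bounded $m$ that depends only on $\kappa$, $\delta$ and numerical constants.

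First, I would apply Assertion 4(i) together with Lemma \ref{par-phase-dist1} and Corollary \ref{cor:pardist} to deduce that for every free iterate $\nu\geq E_0$ the tangent vector $\dot z_\nu(a)$ has norm comparable to $\|w_\nu^*\|\geq e^{\kappa\nu}$ and lies within angle $b^{t/2}$ of the horizontal, with bounded multiplicative distortion across $a\in\omega_0$. Consequently, as long as the image curve $z_\nu(\omega_0)$ remains outside the critical strip $\{|x|<\delta\}$, its horizontal diameter grows by a factor at least $e^\kappa/C$ per free iterate, and within a bounded number of steps it reaches diameter of order one.

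Next I would handle the possibility that during this iteration the curve enters the critical strip. When that happens, the next application of $F_a$ folds the image into an approximate parabola centered near $(1,0)$; by Lemma \ref{unstable} each of the two monotone branches of the folded image, outside a small neighborhood of the cusp, is again a $C^2(b)$ curve. I would select the longer branch together with its preimage subinterval of $\omega_0$ and resume the expansion argument there, losing at most a fixed fraction of length per fold. After at most a number of iterations bounded in terms of $\kappa$ and $\log(1/\delta)$, the surviving parameter subinterval yields an image whose horizontal projection exceeds length one, hence covers $[3/8,5/8]$. I then take $\tilde\omega_0$ to be the subinterval on which $a\mapsto \pi_1 z_E(a)$ is monotone and surjects onto an interval containing $[3/8,5/8]$.

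The main obstacle I anticipate is propagating the $|h'|,|h''|\leq Cb^t$ bounds of Definition \ref{C2bcurve} across the fold at the critical region, and simultaneously controlling how much length of $\omega_0$ is lost at each branch-selection step. For the curvature estimate I would follow the analysis of the leaves of $W^u(\hat z)$ used in the proof of Lemma \ref{unstable} (in the spirit of \cite{MoraViana}, Section 7, and the curvature argument of Lemma 9.3 there), where the $C^2(b)$ bound is inherited essentially from the strong contraction $|B|\leq K\sqrt b$ built into the H\'enon-like hypotheses of Definition \ref{henonlike}.
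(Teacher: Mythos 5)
Your overall strategy (a one--dimensional forward--iteration argument starting from an escape curve, selecting a monotone branch after each fold) is in the right spirit, but two of your quantitative claims are wrong, and together they hide the one idea that actually makes the lemma work. First, the per--iterate expansion claim fails: outside the critical strip the one--dimensional derivative is $\approx 2a|x|$, which is $\approx 2a\delta\ll 1$ just outside $(-\delta,\delta)$, so the horizontal diameter does \emph{not} grow by a factor $e^{\kappa}/C$ at every free iterate; the bound $\|w_\nu\|\geq e^{\kappa\nu}$ is only a cumulative estimate over whole free periods (with a prefactor $\sim\delta$). Second, and more seriously, the fold does not cost ``a fixed fraction of length'': a segment of horizontal length $\ell$ abutting the critical strip is mapped by the quadratic fold to a segment of length $\sim a\,\ell(\ell+2\delta)$, so a branch of length $\sqrt{\delta}/3$ collapses to length $O(\delta)$ in one step. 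With that loss your bookkeeping does not close, and the claim that ``after boundedly many steps the projection exceeds length one'' is unsupported; note also that a projection of length one inside $[-1,1]$ need not contain $[3/8,5/8]$, so you would still need to control \emph{where} the final image sits.

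The missing ingredient -- and the heart of the paper's proof, which follows Thunberg -- is the repelling fixed point near $(-1,0)$. After the fold, the surviving branch lands within $O(\delta^{c})$ of the critical value $x\approx 1$, so one more iterate places it in a small neighborhood of the fixed point $\approx -1$ of $x\mapsto 1-ax^2$, where the derivative is $\approx 2a>3$ uniformly. The two endpoints of the branch arrive at distances to $-1$ that differ by a definite power of $\delta$, so after $i_0\sim\log(1/\delta)$ iterates of uniform expansion by a factor $>3$ (while the image stays in $\{x\leq -3/4\}$) one endpoint escapes this region while the other is still close to $-1$; two further iterates then make the projection cover a fixed interval containing $[3/8,5/8]$. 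This passage near the fixed point is what recovers the length lost at the quadratic fold and simultaneously pins down the location of the final image; without it your argument has a genuine gap. Your concern about propagating the $C^2(b)$ bounds across the fold is legitimate but secondary: the paper treats the whole argument as effectively one--dimensional (the dynamics takes place outside $(-\delta,\delta)\times\R$ except at the single fold), and the curvature control is exactly the one you cite from Lemma 9.3 of Mora--Viana.
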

\begin{proof}
This proof is purely one-dimensional, since $b$ is small and the dynamics is outside of $\left(-\delta,\delta\right)\times\R$. We use an argument very similar to that in \cite{Th}. By \cite{MoraViana}, there is a time $n$ and an interval $\omega_0\in\mathcal P_n$ so that $\pi_1 z_{n}(\omega_0)\cap (-\delta,\delta)\neq\emptyset$ and $\left|\pi_1 z_{n}(\omega_0)\right|\geq\sqrt{\delta}$. Consequently, one of the components, $L_n'$ of $\pi_1 z_{n}(\omega_0)\setminus\left(-\delta,\delta\right)$ has length bigger than $\sqrt{\delta}/3$. Let $\omega'=\left[a_1,a_2\right]$ be defined by the relation $$\pi_1 z_n(\omega')=L_n'=\left[\pi_1 u,\pi_1 v\right],$$
where $u$ and $v$ are the end points of the curve $z_n(\omega')$.
Consider then the future iterates $z_{n+i}(\omega')$, $i=1,2,\dots,$ under the parameter dynamics. Observe that $\pi_1 z_{n+2}(\omega')$ is located at \begin{eqnarray*}
\left(\pi_1 F^2_{a_1}(u),\pi_1 F^2_{a_2}(v)\right)&=& \left(1-a_1\left(1-a_1\delta^2\right)^2+O(b^t),\pi_1 F^2_{a_2}(v)\right)\\
&=& \left(1-a_1+O\left(\delta^2\right)+O(b^t),1-a_2+\Theta\left(\delta^{\frac{4}{3}}\right)\right),
\end{eqnarray*}
where the function $\Theta(x)$ satisfies $c_1 x\leq \Theta(x)\leq c_2
x$ for some numerical constants $c_1$ and $c_2$. Observe that
$F^2_{a_1}(u)$ and $F^2_{a_2}(v)$ and consequently $\left(\pi_1
  F^2_{a_1}(u),\pi_1 F^2_{a_2}(v)\right)$ are located near the saddle
fixed point close to $(-1,0)$ where the dynamics is expanding in the $x$-direction by a factor bigger than $3$ as long as 
\begin{equation}\label{closecritpoint}
\pi_1 F^{2+i}_{a_2}(v)\leq-\frac{3}{4}
\end{equation}
Denote by $i_0$ the last $i$ for which (\ref{closecritpoint}) is verified. Then $\pi_1 F^{2+i_0}_{a_1}(u)$ is still close to $-1$; its distance to $-1$ is of order $O\left(\delta^{2-\frac{4}{3}}\right)$. After $2$ more iterates 
 $$\left(\pi_1 F^{4+i_0}_{a_1}(u), \pi_1 F^{4+i_0}_{a_2}(v)\right)\supset\left[\frac{3}{4},\frac{5}{4}\right].$$
\end{proof}

To the fixed point $(\hat{x},\hat{y})$ there is a symmetric point on
$W^u(\hat{x},\hat{y})$, $(\hat{x_1},\hat{y_1})$,  located approximately
at $(-\hat{x},\hat{y})$. The  leg of $W^s(\hat{z})$ in the negative
$y$-direction  crosses
this homoclinic point and the slope $s$ of the curve segment of
$\gamma_s$ joining the
two points  $(\hat{x},\hat{y})$ and    $(\hat{x_1},\hat{y_1})$
satisfies $s\geq C/\sqrt{b}$ on all points of $\gamma_s$, see Lemma
\ref{stablemanifold}. We choose the intersection with the preimage to
ensure that at the next iterate when the curve segment intersects the
stable manifold, the distance to the fixed point $\hat{z}$ is defined
by a high accuracy and is very close to the width of the parabola at
this $x$-coordinate. This is needed to make the time $E'$, which
will appear later, well defined, see Lemma \ref{lambda2}. 

\begin{lem}\label{homoclinic} 
There is a subinterval $\tilde\omega_0'\subset\tilde\omega_0$ such
that, for all $a\in\tilde\omega_0'$, the stable leg of $W^s(\hat{z})$
pointing downwards, denoted by $\gamma^s_a$, intersects the middle half of $z_E(\tilde\omega_0')$.
\end{lem}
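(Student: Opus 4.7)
The plan is to reduce the claim to a fixed point / contraction argument based on the strong transversality between $\gamma^s_a$ and $z_E(\tilde\omega_0)$. First I would set up the intersection map. By Lemma \ref{stablemanifold} the curve $\gamma^s_a$ has slope $\geq K/\sqrt b$ and varies $C^1$ in $a$, while $z_E(\tilde\omega_0)$ is a $C^2(b)$ curve whose $x$-projection contains $[3/8,5/8]$. Since $\hat z(a)\approx (\tfrac12,0)$, the almost vertical leg $\gamma^s_a$ has $x$-coordinate close to $\tfrac12$ for every $a\in\tilde\omega_0$, so it meets $z_E(\tilde\omega_0)$ in exactly one transverse point. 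Parametrising this point by its preimage under $z_E$, I get a well-defined $C^1$ map $\Psi\colon\tilde\omega_0\to\tilde\omega_0$ characterised by $z_E(\Psi(a))=\gamma^s_a\cap z_E(\tilde\omega_0)$.

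The heart of the argument is the estimate $|\Psi'(a)|\ll 1$. The point $q(a):=\gamma^s_a\cap z_E(\tilde\omega_0)$ varies with $a$ at an $O(1)$ rate, because the fixed point $\hat z(a)$, the stable direction there, and hence the whole leg $\gamma^s_a$ depend smoothly on $a$ with bounded derivatives (Lemma \ref{stablemanifold}). The parameter curve, on the other hand, moves very fast: by the Collet--Eckmann condition at escape times and the parameter--phase comparison of Lemma \ref{par-phase-dist1}, $\|\dot z_E(a)\|\geq c\,e^{\kappa E}$, while Corollary \ref{cor:pardist} ensures bounded distortion of $\|\dot z_E\|$ across $\tilde\omega_0$. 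Projecting the identity $\dot z_E(\Psi(a))\,\Psi'(a)=q'(a)$ onto the almost horizontal direction of $\dot z_E$ (using that $\gamma^s_a$ is almost vertical), I obtain
$$
|\Psi'(a)|=O(e^{-\kappa E}).
$$

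With this contraction estimate, $\Psi$ is a strict contraction of $\tilde\omega_0$ into itself, so by the Banach fixed point theorem it admits a unique fixed point $a^*\in\tilde\omega_0$, i.e.\ a parameter for which $\gamma^s_{a^*}$ passes through $z_E(a^*)$ itself. Choose $\ell>0$ small enough that $\tilde\omega_0':=[a^*-\ell,a^*+\ell]\subset\tilde\omega_0$. For every $a\in\tilde\omega_0'$,
$$
|\Psi(a)-a^*|=|\Psi(a)-\Psi(a^*)|\leq O(e^{-\kappa E})\cdot\ell,
$$
which is much smaller than $\ell/2$ for $E$ large. Hence $\Psi(a)\in[a^*-\ell/2,a^*+\ell/2]$, so $\gamma^s_a\cap z_E(\tilde\omega_0')=z_E(\Psi(a))$ lies in the middle half of $z_E(\tilde\omega_0')$, as required.

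I expect the main technical obstacle to be a clean proof of the derivative bound $|\Psi'(a)|=O(e^{-\kappa E})$. This requires combining the $C^1$ dependence of $W^s(\hat z(a))$ on $(a,x,y)$ from Lemma \ref{stablemanifold} with the expansion and bounded distortion of $\dot z_E$ given by Lemma \ref{par-phase-dist1} and Corollary \ref{cor:pardist}, and then performing the implicit differentiation carefully along the almost orthogonal directions of the two curves. Once that estimate is in hand, both the existence of $a^*$ and the middle-half statement follow formally.
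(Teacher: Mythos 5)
Your argument is correct and rests on the same key comparison the paper uses: the stable leg $\gamma^s_a$ (and hence its intersection point with the fixed curve $z_E(\tilde\omega_0)$) moves at an $O(1)$ rate in $a$, while the parametrization $a\mapsto z_E(a)$ moves exponentially fast, so the pulled-back intersection parameter barely moves — which is exactly the content of the paper's estimate $|p_1-p_2|\leq K|\tilde\omega_0|\leq Ke^{-cE}$ obtained from Lemma \ref{par-phase-dist1}. The only difference is packaging: the paper performs a single explicit recentering (pulling back the intersection point for the midpoint parameter and taking a small interval around that preimage), whereas you iterate this to a Banach fixed point $a^*$ and center there; both yield the middle-half conclusion for the same reason.
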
 
\begin{proof}
Let $\tilde a_0$ be the midpoint of $\tilde\omega_0$ and let
$p_1=\gamma^s_{\tilde a_0}\cap z_E(\tilde\omega_0)$. Let $\tilde a_0'$
be the preimage of $p_1$ in $\tilde\omega_0$. Observe that
$\gamma^s_{\tilde a_0'}$ intersects $z_E(\tilde\omega_0)$ at $p_2$. By
Lemma \ref{par-phase-dist1},
$$
|p_1-p_2|\leq K |\tilde\omega_0|\leq K
e^{-cE},
$$
where $K$ is a positive constant. We choose now a subinterval $\tilde\omega_0'\subset\tilde\omega_0$ having midpoint $\tilde a_0'$ and such that $z_E(\tilde\omega_0')$ has length $e^{-c E}$. Then $\tilde\omega_0'$ has the required property, i.e. for all $a\in \tilde\omega_0'$, $\gamma^s_a$ intersects $z_E(\tilde\omega_0')$ in its middle half.
\end{proof}
The following lemma allows us to control the dynamics so that part of
the parameter interval  returns close to a critical point with a
controlled geometry, see Figure \ref{Fig0}.  This will create an attractive periodic orbit for
all selected parameters.

\begin{lem}\label{returnlemma}
There is a subinterval $\tilde\omega_0''\subset\tilde\omega_0',$ with midpoint $\tilde a_0''$ and a time $N$ so that, $z_N(\tilde\omega_0'')$ has the following properties:
\begin{itemize}
\item[(i)] $z_{N}(\tilde\omega_0'')$ is a $\Cd(b)$ curve,
\item[(ii)] $|z_{N}(\tilde\omega_0'')|=\frac{1}{100}\frac{1}{D_N}$,
\item [(iii)] $\text{\rm dist}\left(\pi_1 z_0(\tilde a_0''),z_N(\tilde\omega_0'')\right)\leq\frac{1}{50}\frac{1}{D_N}$,
\end{itemize}
\end{lem}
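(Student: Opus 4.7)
The plan is to extend the long escape situation of Lemma \ref{lem:long_escape_sit} further in time and catch the parameter curve on its first near-encounter with the critical point. By Lemma \ref{homoclinic} there is a distinguished parameter $a_\ast\in\tilde\omega_0'$ whose image $z_E(a_\ast)$ lies on the downward leg $\gamma^s_{a_\ast}$ of $W^s(\hat z)$. The forward orbit $\{z_{E+j}(a_\ast)\}_{j\geq 0}$ of this point under $F_{a_\ast}$ is therefore a true homoclinic orbit for the saddle $\hat z$: it first contracts toward $\hat z(a_\ast)$ along $\gamma^s$, and is then expelled along $W^u(\hat z(a_\ast))$. Since the critical point $z_0(a_\ast)\in\Gamma_E$ lies on the left leg of $W^u(\hat z)$, and later generations of $W^u$ accumulate on the attractor, and in particular on $z_0$, there is a first time $M$ for which $z_{E+M}(a_\ast)$ lies on a horizontal $C^2(b)$-piece of $W^u(\hat z(a_\ast))$ passing within a small but definite distance from $\pi_1 z_0(a_\ast)$. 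Set $N := E+M$.

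Next I would study the parameter-dependent curve $a\mapsto z_N(a)$ on $\tilde\omega_0'$. Corollary \ref{cor:pardist} together with Lemma \ref{par-phase-dist1} provides bounded distortion of the tangent vectors $\dot z_N(a)$ and smallness of the angle variation throughout $\tilde\omega_0'$, so $z_N(\tilde\omega_0')$ inherits from $W^u(\hat z)$ the property of being a $C^2(b)$-curve, with length comparable to $D_N\,|\tilde\omega_0'|$, where $D_N\approx\|\dot z_N(a_\ast)\|$. I then take $\tilde\omega_0''\subset\tilde\omega_0'$ centered at a midpoint $\tilde a_0''$ close to $a_\ast$, and shrink it so that $|z_N(\tilde\omega_0'')|=\frac{1}{100 D_N}$, giving (i) and (ii). For (iii), $\tilde a_0''$ is positioned by a sliding argument: as $a_\ast$ varies along $\tilde\omega_0'$, the $x$-coordinate of $z_N(a_\ast)$ moves with derivative of order $D_N$, while $\pi_1 z_0(a)$ varies with derivative of order $1$. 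Hence by an intermediate value argument I can select $\tilde a_0''$ for which $\dist(\pi_1 z_0(\tilde a_0''), z_N(\tilde\omega_0''))$ equals the prescribed $\frac{1}{50 D_N}$.

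The principal obstacle is the simultaneous coordination of these three conditions. The derivative $D_N$ grows rapidly with $N$, while the distance from $z_{E+j}(a_\ast)$ to $z_0(a_\ast)$ does not decrease monotonically in $j$. One must verify that somewhere during the passage of the orbit along $W^u(\hat z)$ it comes close to $z_0(a_\ast)$ at precisely the right scale; this exploits the density of high-generation pieces of $W^u$ near the critical set (Lemma \ref{unstable} combined with Lemma \ref{lem:equidistsm}) and the continuous dependence of $z_0(a)$ on $a$. A secondary difficulty is maintaining the $C^2(b)$ regularity through the homoclinic transit near $\hat z$, which is handled by the distortion estimates \eqref{eq:modulus}--\eqref{eq:angle2} on the $w_\nu^*$-vectors of the splitting algorithm, provided the orbit stays in the free regime. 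This free regime is guaranteed by the fact that $z_E(a_\ast)$ starts in an escape situation and, by minimality of $M$, no deeper return to $\Gamma_E$ occurs before time $N$.
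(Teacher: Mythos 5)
Your overall scaffolding --- Lemma \ref{homoclinic} to obtain the crossing with $W^s(\hat z)$, then an inclination-lemma mechanism, then Corollary \ref{cor:pardist} and the distortion estimates to transfer everything to the parameter curve and shrink it to length $\frac{1}{100}D_N^{-1}$ --- matches the paper's proof. But the step that actually produces the return near $z_0$ is wrong as you state it. The point $z_E(a_\ast)$ lies on $W^s(\hat z(a_\ast))$, so its forward orbit converges to $\hat z(a_\ast)$; it is never ``expelled along $W^u$'' and never approaches $z_0(a_\ast)$, which sits near $x=0$, a definite distance from $\hat z$. Consequently your definition of $M$ (``the first time $z_{E+M}(a_\ast)$ lies on a piece of $W^u$ near $\pi_1 z_0$'') selects nothing. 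What does return near the critical point is not the orbit of the homoclinic parameter but the \emph{curve} $z_{E+j}(\tilde\omega_0')$: since it crosses $\gamma^s$ transversally and has length $\sim e^{-cE}$, the quantitative $\lambda$-lemma (Lemma \ref{lambda2}) forces $F_a^{E'}\bigl(z_E(\tilde\omega_0')\bigr)$, with $E'\sim E$, to stretch along the first leg of $W^u(\hat z)$ across $x=0$ down to $x=-\frac14$, staying within $\lambda_s^{E'/10}$ of $W^u$; by the parameter--phase comparability the parameter curve $z_{E+E'}(\tilde\omega_0')$ then covers the $x$-interval $[-\frac18,\frac18]\ni\pi_1 z_0$. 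With this covering, (iii) is automatic: one takes $\tilde\omega_0''$ to be the preimage of the subarc of length $\frac{1}{100}D_N^{-1}$ sitting over $\pi_1 z_0(\tilde a_0'')$.

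For the same reason, your two ``principal obstacles'' are artifacts of the misidentified mechanism. There is no need for the orbit to meet $z_0$ ``at precisely the right scale,'' and the appeal to accumulation of high-generation leaves of $W^u$ on $z_0$ (Lemmas \ref{unstable} and \ref{lem:equidistsm}) is both unnecessary here and insufficient as a substitute: it gives phase-space recurrence of leaves, not the parameter-uniform positioning of $z_N(\tilde\omega_0'')$ that (ii)--(iii) require. (That accumulation argument is what the paper uses later, in Section \ref{sec:capturing}, for capturing a \emph{second} critical point --- a different task.) Note finally that $\tilde a_0''$ is not ``close to $a_\ast$'' in the relevant sense: $z_N(a_\ast)$ remains near $\hat z$, whereas $z_N(\tilde a_0'')$ must lie near $z_0$; the two sit at opposite ends of the stretched curve $z_N(\tilde\omega_0')$.
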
 
where $D_N=|w_N|$.
\begin{figure}[h]
\centering
\includegraphics[width=0.9\textwidth]{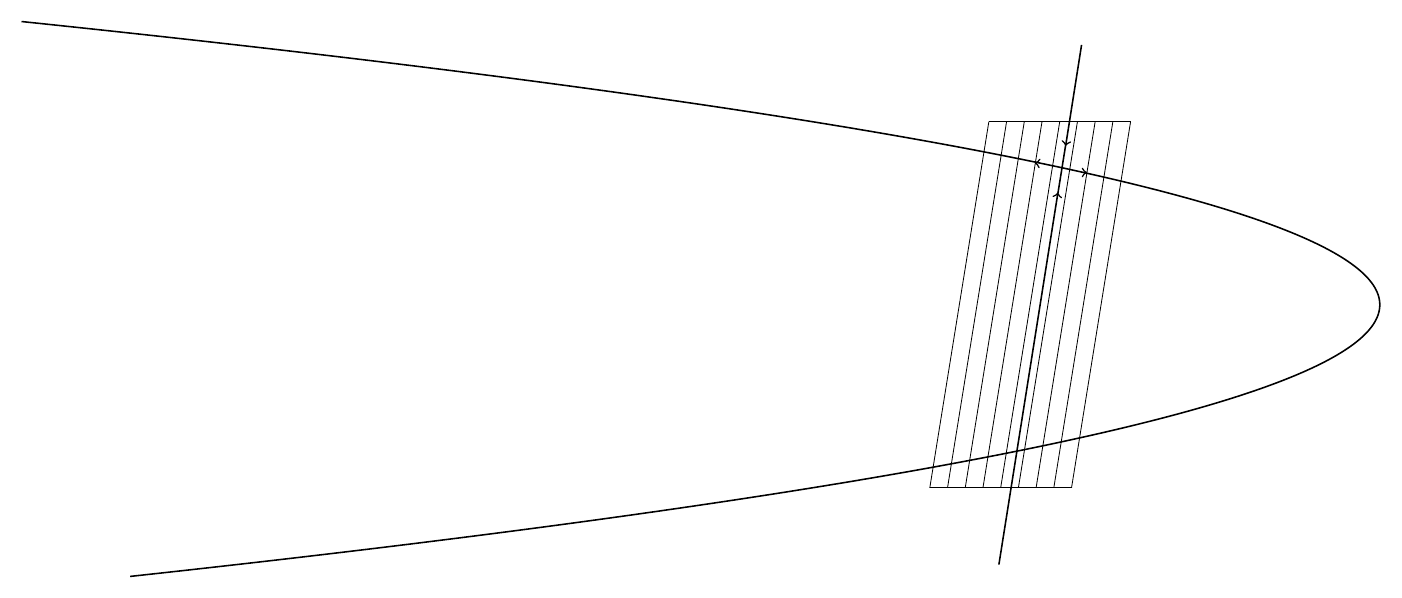}
\caption{Stable foliation at the fixpoint}
\label{Fig_stable_fixedpoint}
\end{figure}

\smallskip
  The proof of Lemma \ref{returnlemma} consists of several steps,
  formulated in a sequence of lemmas.

Consider the phase curve $\gamma=z_E(\tilde\omega_0')$ and denote by
$\tilde a_0'$ the midpoint of $\tilde\omega_0'$. We recall the $\lambda$-lemma, see e.g. \cite{PalisdeMelo}, Lemma 7.1.

\begin{lem}\label{lambda1}  Let 0 be a saddle fixed point of a $C^2$ map.  Let $V=B^u\times B^s$ be the cartesian product of an
  unstable and stable ball at the fixed point 0, let
  $q\in W^s(q)\setminus \{0\} $ and let $D^u$ be a disk transverse to
  $W^s$ intersecting  $W^s$ in $q$. Let $D^u_n$ be the connected
  component of $F^n(D^u)\cap V$ to which $F^n(q)$ belongs. Given
  $\varepsilon>0$ there exists $n_0\in{\mathbb N}$ such that if
  $n>n_0$, then $D^u_n$ is $\varepsilon>0$ $C^1$ close to $B^u$.
\end{lem}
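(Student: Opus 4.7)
The plan is to give the classical cone-field proof of the inclination lemma, adapted to the product neighborhood $V=B^u\times B^s$. First I would invoke the stable manifold theorem at $0$ to obtain $C^1$ coordinates $(\xi,\eta)\in E^u\times E^s$ on $V$ in which $W^u_{\mathrm{loc}}=B^u\times\{0\}$, $W^s_{\mathrm{loc}}=\{0\}\times B^s$, and
\[
F(\xi,\eta)=\bigl(A\xi+f(\xi,\eta),\,B\eta+g(\xi,\eta)\bigr),
\]
with $\|A^{-1}\|\le\mu^{-1}$ for some $\mu>1$, $\|B\|\le\lambda<1$, and the nonlinear parts $f,g$ together with their derivatives vanishing at the origin. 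Shrinking $V$, I can arrange that $\|Df\|,\|Dg\|\le\varepsilon_0$ throughout $V$, for any prescribed $\varepsilon_0>0$.

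Because $q\in W^s(0)\setminus\{0\}$, some iterate $F^{N_0}(q)$ lies in $V$ and $F^n(q)\to 0$. Replacing $D^u$ by the connected component of $F^{N_0}(D^u)\cap V$ through $F^{N_0}(q)$, I may assume $q\in V$ and, by transversality of $D^u$ to $W^s$, that near $q$ the disk $D^u$ is the graph $\eta=h_0(\xi)$ of a $C^1$ map $h_0:U_0\subset E^u\to E^s$ with a fixed bound on $\|Dh_0\|$.

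The main step is the graph transform under iteration. I would define the horizontal cone field
\[
\mathcal{C}^u_\alpha=\bigl\{(v_1,v_2)\in E^u\times E^s:\|v_2\|\le\alpha\|v_1\|\bigr\},
\]
and verify directly from the block form of $DF$ that, for $V$ small, $DF(\mathcal{C}^u_\alpha)\subset\mathcal{C}^u_{\rho\alpha}$ with $\rho=\rho(\lambda,\mu,\varepsilon_0)<1$, while $\|DF(v)\|\ge(\mu-O(\varepsilon_0))\|v\|$ for $v\in\mathcal{C}^u_\alpha$. Since all tangent planes to $D^u$ close enough to $q$ lie in $\mathcal{C}^u_\alpha$, each image $D^u_n$ is, in a neighborhood of $F^n(q)$, the graph of a $C^1$ map $h_n:U_n\to E^s$ with $\|Dh_n\|\le\rho^n\|Dh_0\|$; in particular the slopes tend to $0$. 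Combined with the unstable expansion $\mu-O(\varepsilon_0)>1$, the domains $U_n$ grow geometrically in diameter in the $\xi$-direction.

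The step I expect to be the real obstacle is to show that for $n\ge n_0(\varepsilon)$ the domain $U_n$ actually covers all of $B^u$ and the graph stays inside $B^s$, so that the connected component $D^u_n$ in $V$ coincides with the full graph and is $\varepsilon$-$C^1$-close to $B^u$. Here I would argue as follows: $F^n(q)\to 0$ provides a base point with $\xi$-coordinate tending to $0$ and $\eta$-coordinate tending to $0$; the graph extends by the unstable expansion until $U_n\supset B^u$; and since $\|Dh_n\|\le\rho^n\|Dh_0\|$, along this extension $h_n$ varies by at most $\mathrm{diam}(B^u)\cdot\rho^n\|Dh_0\|$, which is less than the radius of $B^s$ for $n$ large, preventing premature escape through the stable boundary. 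Hence $D^u_n$ is the graph of a $C^1$ map $h_n:B^u\to B^s$ with $\|h_n\|_{C^0}+\|Dh_n\|<\varepsilon$ for $n>n_0$, which is the required $\varepsilon$-$C^1$-closeness to $B^u$.
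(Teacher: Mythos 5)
Your proposal is correct: it is the standard cone-field/graph-transform proof of the inclination lemma, with the two usual delicate points (eventual covering of $B^u$ by the domain of the graph, and non-escape through the stable boundary) handled properly. The paper itself does not prove this lemma but simply cites it as Lemma 7.1 of \cite{PalisdeMelo}, and your argument is essentially the proof given there, so there is nothing to reconcile.
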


  In our present setting we can obtain a quantative version of the
  $\lambda$-lemma adapted to our situation. In the following we refer
  to  Figure \ref{Fig_stable_fixedpoint}.
  
  \begin{lem}\label{lambda2}
Suppose a $C^2(b)$-curve $\gamma$  of size $e^{-\kappa E}$  crosses the leg of
$W^s(\hat{z})$ in the negative $y$-direction. Then after $E'$ iterates
where $E'\sim E$, $F^{E'}_a(\gamma)$ will be a $C^2(b)$ curve stretching along
$W^u(\hat{z})$ and across the ordinate axis $x=0$ to
$x=-\frac{1}{4}$. Close to $x=0$ the vertical distance between
$W^u(\hat{z})$ and  $F^{E'}_a(\gamma)$ can be estimated as

\begin{equation}\label{eq:dist}
\leq \text{\rm const. } \left(\lambda_s\right)^{\frac{1}{10}E'}.
\end{equation}
and the angles between points with the same $x$-coordinate satisfies

\begin{equation}\label{eq:angle}
\leq \text{\rm const. } \left(\lambda_s\right)^{\frac{1}{40}E'}.
\end{equation}
 \end{lem}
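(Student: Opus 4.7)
The plan is to establish a quantitative version of the classical $\lambda$-lemma (Lemma~\ref{lambda1}) adapted to the Hénon-like setting, using near-linearizing coordinates around the saddle $\hat z$. On a fixed neighborhood $V$ of $\hat z$, I would introduce coordinates $(u,v)$ in which the linear part of $F_a$ is $\operatorname{diag}(\lambda_u,\lambda_s)$, with $\lambda_u\approx -2$ and $\lambda_u\lambda_s=\hat d$, $|\hat d|\le Kb$. By Lemma~\ref{stablemanifold}, the downward stable leg $\gamma^s$ coincides with the $v$-axis up to $O(\sqrt b)$, and by Lemma~\ref{unstable}, the arc of $W^u(\hat z)$ through $\hat z$ coincides with the $u$-axis up to $O(b^t)$. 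Since $\gamma$ has slope $O(b^t)$, its crossing with $\gamma^s$ is transverse with angle bounded away from zero.

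I would split the $E'$ iterates into three phases. In Phase~A, the intersection point $q=\gamma\cap\gamma^s\in W^s(\hat z)$ is pulled into $V$; this uses that $F_a^n(q)\to\hat z$ at geometric rate $|\lambda_s|$ along $\gamma^s$ and takes $n_1=O(\log(1/\rho)/\log(1/|\lambda_s|))$ iterates, where $\rho$ is the radius of $V$. In Phase~B the image lives inside $V$ and the linearized dynamics expands the nearly horizontal tangent by $|\lambda_u|$ and contracts the transverse component by $|\lambda_s|$ per iterate; Phase~B stops at time $n_2\sim\kappa E/\log|\lambda_u|$, when the image first has unstable length of order $\rho$. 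Phase~C of bounded length $n_3$ then carries the image along $W^u(\hat z)$ across $x=0$ down to $x=-\tfrac14$, using Lemma~\ref{unstable}. Summing, $E'=n_1+n_2+n_3\sim E$. At the end of Phase~B the image has vertical distance $|\lambda_s|^{n_2}v_q$ from the $u$-axis, where $v_q=O(1)$ is the $v$-coordinate of $q$; translating back to the original coordinates, using $n_2\ge E'/10$, and noting that Phase~C does not significantly affect vertical distances gives~\eqref{eq:dist}. The angle estimate~\eqref{eq:angle} follows from the analogous computation for tangent vectors (or from the cone estimate in Lemma~\ref{lemma6.4}): the stable component of a horizontal tangent contracts by $|\lambda_s|^{n_2}$ while the unstable one expands by $|\lambda_u|^{n_2}$, so the angle with $W^u(\hat z)$ is of order $(|\lambda_s|/|\lambda_u|)^{n_2} b^t \leq C|\lambda_s|^{E'/40}$.

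The main obstacle will be preserving the $C^2(b)$-property throughout all three phases. Inside $V$ this is automatic: the linear map $\operatorname{diag}(\lambda_u,\lambda_s)$ sends a graph $(u,h(u))$ to a graph $(u',\tilde h(u'))$ with $|\tilde h'|\le(|\lambda_s|/|\lambda_u|)|h'|$ and $|\tilde h''|\le(|\lambda_s|/|\lambda_u|^2)|h''|$, both strictly contractive by a factor of order $b$. In Phases~A and~C, which have bounded length, the nonlinear terms are controlled by $\|D^2F_a\|\le K$ together with the invariant-cone argument of \cite[Section~9]{MoraViana} applied to the horizontal $b^t$-cone; this propagates the bound $|h''|\le Cb^t$ from $\gamma$ to $F_a^{E'}(\gamma)$. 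A secondary but routine issue is that the linearizing conjugacy on $V$ is only $C^2$-close to the identity with $b$-dependent bounds; these bounds nonetheless suffice for all the quantitative estimates above.
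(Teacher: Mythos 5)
Your argument is correct in outline, but it follows a genuinely different route from the paper. The paper does not linearize at the saddle at all: it uses the stable foliation constructed in Lemmas \ref{stable-foliation} and \ref{lem:stablefoliation} to connect each point $\zeta_0\in\gamma$ to a point $\zeta_0'$ on $W^u(\hat{z})$ along a leaf of the contracting field $e^{(n)}$, and then invokes the shadowing/comparison estimates of Lemma \ref{lemma6.4} (with $\kappa=(1+\varepsilon)\lambda_s$) to conclude that $F^{E'}(\gamma)$ tracks the corresponding arc of $W^u(\hat{z})$ in position and in tangent direction; the bounds \eqref{eq:dist} and \eqref{eq:angle} then drop out of Lemma \ref{lemma6.4}(a),(b), with the exponents $\tfrac{1}{10}$ and $\tfrac{1}{40}$ absorbing the comparison between $\kappa$, $\log|\lambda_u|$ and the square roots in those estimates. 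Your three-phase inclination-lemma proof is the classical alternative; it is more self-contained near the saddle and in fact yields sharper exponents (essentially $|\lambda_s|^{n_2}$ and $(|\lambda_s|/|\lambda_u|)^{n_2}$ with $n_2\sim E'$), whereas the paper's route reuses machinery that is needed elsewhere anyway and extends verbatim to the H\'enon-like setting. The one point you should tighten is Phase B: a genuinely $C^2$ linearization at a hyperbolic saddle of a $C^2$ (or $C^3$) map is not available for free, so you cannot take the exact model $\operatorname{diag}(\lambda_u,\lambda_s)$ to propagate the second-derivative bound $|h''|\leq Cb^t$ over the $\sim E$ iterates of Phase B. The fix is standard and is the same device you already invoke for Phases A and C: the curvature of an iterated horizontal graph satisfies a contraction inequality of the form $\kappa_{n+1}\leq \theta\,\kappa_n+C\|D^2F\|$ with $\theta$ small (this is \cite{MoraViana}, Lemma 9.3, which the paper itself uses in Lemma \ref{unstable}), and this must be applied throughout Phase B as well, not only in the bounded-length phases.
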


 \begin{proof} 
We apply the construction of the stable foliation in lemmas
\ref{stablefoliation} and \ref{lem:stablefoliation}. For each point of $\zeta_0\in\gamma$ we connect it to a
corresponding point $\zeta_0'$ on $W^u(\hat{z})$. It is then possible
to apply Lemma \ref{lemma6.4} with $\tilde{z}_0=\zeta_0$,
$\tilde{z}_0'=\zeta_0'$ and $\kappa=(1+\varepsilon)\lambda_s$, for a
suitable $\varepsilon>0$. 
We conclude that the estimates of \eqref{eq:dist} and \eqref{eq:angle} hold.
\end{proof}

\begin{rem} Note that $\lambda_u\cdot \lambda_s=\det DF_a(\hat{z})$
and that the factor $\frac{1}{10}$ comes from the comparison between
$\kappa$ and $\log |\lambda_u|$, where  
$\log 2-\varepsilon \leq \log |\lambda_u| \leq \log2$, and where $\varepsilon$ depends on $2-a$.
\end{rem}

\medskip
{\em Proof of Lemma \ref{returnlemma}}. For the following we refer to
Figure \ref{Fig0} .

\begin{figure}[h]
\centering
\includegraphics[width=0.9\textwidth]{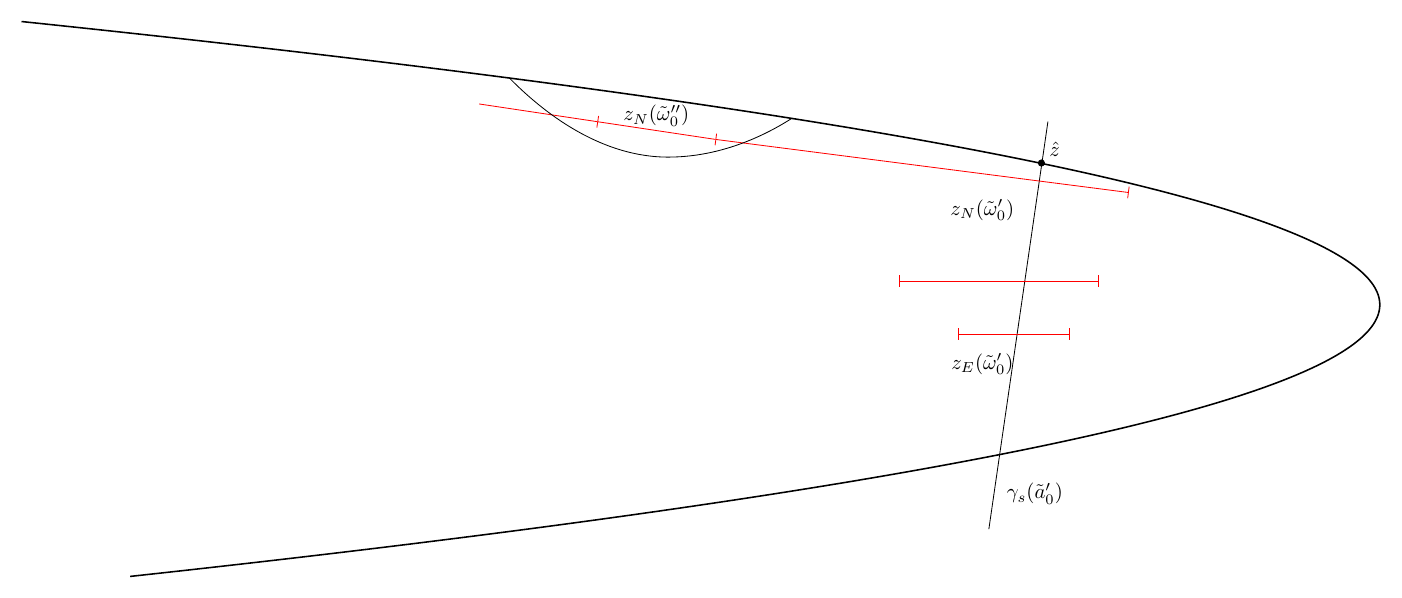}
\caption{The capturing argument}
\label{Fig0}
\end{figure}

\begin{itemize}

\item[(i)] We apply Lemma \ref{lambda1} to a fixed parameter $\tilde{a}_0'\in\tilde{\omega}_0'$
from Lemma \ref{stable-foliation}, (b),  
to $\gamma$ with fixed
parameter $\tilde a_0'$. At a certain time $E'\sim E$, $F_{\tilde a_0'}^{E'}(\tilde{\omega}_0')$ stretches along $W^u(\tilde a_0')$ covering its
$x$-projection $\left[-\frac{1}{4},\frac{1}{4}\right]$.

\item[(ii)] By the
comparability of $x$ and $a$ derivatives, see Corollary \ref{cor:pardist},  during the
time from $E$ to $E+E'$ and the fact that $|\tilde\omega_0'|\sim
e^{-2cE}$, one can check that $z_{E+E'}(\tilde\omega_0')$ covers the
$x$-projection  $\left[-\frac{1}{8},\frac{1}{8}\right]$. Now restrict
$\tilde\omega_0'$ to a subinterval $\tilde\omega_0''$ with midpoint
$\tilde a_0''$ so that for $N=E+E'$,
$|z_{N}(\tilde\omega_0'')|=\frac{1}{100}{D_N}^{-1}$.

\item[(iii)]
Note that, as in
\cite{MoraViana}, Section 7, $z_{N}(\tilde\omega_0'')$ is a $\Cd(b)$
curve and $\text{\rm dist}\left(\pi_1 z_0(\tilde
  a_0''),z_N(\tilde\omega_0'')\right)\leq\frac{1}{50}D_N^{-1}$
and we also obtain by Lemma \ref{stable-foliation}, (b),
\eqref{eq:angle} that the angle $\theta$ between the points of
$z_N(\tilde\omega_0'')$
with the same $x$-coordinate on the first leg of $W^u(\hat{z})$
satisies  
\begin{equation}\label{eq:angle1}
\theta\leq \text{\rm const. } \left(\lambda_s\right)^{\frac{1}{40}E'}.
\end{equation}
Here we again have to use the comparasion of parameter and phase
derivatives, Lemma \ref{par-phase-dist1} and the distorsion of the the
$a$-derivative within a partition interval, see Corollary \ref{cor:pardist}.

\end{itemize}


\subsection{Construction of an invariant contractive region}
In this section we prove the existence of an invariant contractive region around the critical point. 
We pick an arbitrary $a\in\tilde\omega_0''$, with $\tilde\omega_0''$
as in Lemma \ref{returnlemma}. We refer to Figure \ref{Fig2New}.
\begin{figure}[h]
\centering
\includegraphics[width=0.9\textwidth]{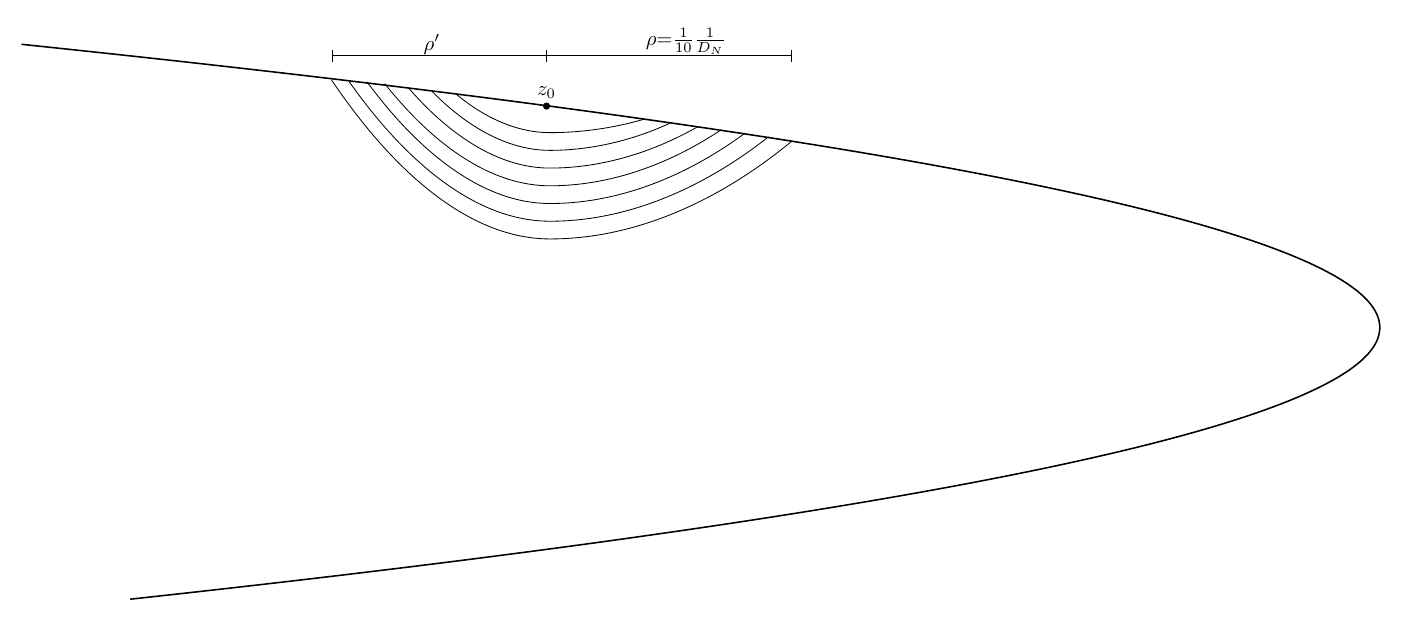}
\caption{Stable foliation at the critical point}
\label{Fig2New}
\end{figure}

Associated to $a$ there is a critical
  point $z_0(a)$ located on the first left leg of $W^u(\hat z)$, see
  Subsection \ref{ss:twod}. We
  fix now a curve $\gamma :(-\rho',\rho)\to\mathbb{R}^2$ on this left
  leg so that $\gamma(0)=z_0$, where $\rho=\frac{1}{10}{D_N}^{-1}$.
  and $\rho'$ will be choosen as follows.

  Close to the critical value $z_1$ there is, by Lemma 
  \ref{lem:stablefoliation}, a quadrilateral foliated by leaves of the
  stable vector field $e_{[N/10]}$. The leave $\gamma_3'$ of $e_{[N/10]}$ through
  $F(\gamma(\rho))$ hits $W^u(\hat{z})$ in another point $\zeta'$ and
  $\rho'$ is defined so that $F(-\rho')=\zeta'$. The pullback of the
  stable leave $\gamma_3'$ by $F$ is denoted by $\gamma_3$.

We define
  $\mathcal{D'_N}$ as the domain bounded by
  $f\left(\gamma_{|(-\rho',\rho)}\right)$ and the stable leave
  $\gamma_3'$. Let $\mathcal D_N$ be the pullback under $F$, namely $\mathcal
  D_N=F^{-1}\left(\mathcal D'_N\right)$. We will prove that $\mathcal
  D_N'$ and hence also   $\mathcal D_N$ are invariant under $F_a^N$
  for all $a$ in  $\tilde\omega_0''$.

 Consider the tangent vector $\tau_1(s)$ of $\gamma_1(s)=F_a(\gamma(s))$ and write it, following Lemma 9.6 in \cite{MoraViana} as 
$$
\tau_1(s)=\alpha(s)e_{E-1}(s)+\beta(s)w_1,
$$
with $\frac{3}{2}a|s|\leq \left|\beta(s)\right|\leq\frac{5}{2}a|s|$ and $w_1=\begin{brsm}1\\0\end{brsm}$. Observe that, at time $E$,
$$
\left\|DF^{E-1}_ae_{E-1}\right\|=O\left(b^{E-1}\right).
$$
Denote by $\gamma_E^1$ and
$\gamma_E^2$ the two sub-curves of $\gamma$ defined by restricting the
arclength to $(-\rho',0)$ and
$(0,\rho)$ respectively. For the image of these curves the tangent vector decomposes as
$$
\tau_E(s)=\alpha(s)DF^{E-1}e_{E-1}(s)+\beta(s)w_{E-1}.
$$
Since, by the induction, $\|w_E\|\geq e^{\kappa E}$, we conclude that

$$
\left|\alpha(s)DF^{E-1}\left(e_{E-1}(s)\right)\right|\leq O(b^{E-1})\leq\frac{1}{2}|s|\|w_E\|
$$
 and since $\text{slope}(w_E)=O(b^t)$, it follows that
 $\gamma^1_E\setminus\tilde{\gamma}^1_E  $ and
 $\gamma^2_E\setminus\tilde{\gamma}^2_E $ are $C^2(b)$ curves. The
 curves $\tilde{\gamma}^1_E$ and $\tilde{\gamma}^2_E$
 correspond to the subsegments close to $z_E$, which are still in fold
 periods of the initial binding to $z_0$, and those segments are of
 size $(Cb)^E$. The curve $\gamma^3_E=F^E(\gamma_3)$ has, by Lemma
 \ref{contractive-field} (b), length $|\gamma^3_E|\leq (Cb)^E$.

There is, by Lemma \ref{contractive-field}, a stable vector field
$e_{E'}$ defined in a vertical region containing the curves
$\gamma^1_E $, $\gamma^2_E$ and $\gamma^3_E$.
By
\cite{BC2}
the curves $F^{E'}(\gamma^1_E)$, $F^{E'}(\gamma^2_E)$ and
$F^{E'}(\gamma^3)$ are located below $\gamma$ and at distance
$O(b^{E'})$. By the angle estimate
\eqref{eq:angle1} it follows  that except for the points still in fold period to $z_0$ at
time $N=E+E'$, the slopes of points of the curves
$\gamma'=F^{E'}(\gamma^1_E)$ and
$\tilde{\gamma}'=F^{E'}(\gamma^2_E)$ with the same
$x$-coordinates is $\leq (Cb)^{E'/40}$.

The curve $F^{E'}(\gamma^3)$ has diameter $\leq 2\cdot 5^{E'}\cdot (Cb)^E$, and it is located close to $z_N$. At this point we choose $\rho'$ so that $F(\gamma(\rho))$ and $F(\gamma(-\rho'))$ are on the same stable leave of $e_E$ close to $\hat z$. The curve segment $F^N(\gamma^1)$ has length 
\begin{eqnarray*}
\text{length}(F^N(\gamma^1))&\leq &\int_{0}^{\rho}|\beta(s)|\|w_N(s)\| ds+\int_{0}^{\rho}O(b^N)d\rho\\
&\leq &\int_{0}^{\rho}4s D_N ds+O(\rho b^N)=2\rho ^2D_N+O(\rho b^N)\\
&\leq &3\left(\frac{1}{10}D_N^{-1}\right)^2\cdot D_N=\frac{3}{100}\frac{1}{D_N}.
\end{eqnarray*}
The length of $F^N(\gamma^2)$ is estimated similarly. Finally
$$
\text{diam}(F^N(\gamma^3))\leq 5^{E'}(Cb)^E\leq\frac{2}{100}\frac{1}{D_N}.
$$
 It follows that $F^{N-1}\left(\mathcal D'_N\right)$ has diameter $\leq\frac{5}{100}D_N^{-1}$ and it is at distance $O(b^{N-1})$ to $\gamma$. Since $\|DF\|_{\Cuno}\leq 5$, then 
$$
F^{N}\left(\mathcal D'_N\right)\subset \mathcal D'_N.
$$
The discussion above can be summarized in the following lemma (see
Figure \ref{Fig6}).
\begin{lem}
For all     $a\in\tilde\omega_0''$ , there exists a domain ${\mathcal
  D}_N(a)$ around the critical point $z_0(a)$, so
that $$F_{a,b}^N\left(\mathcal D_N(a)\right)\subset\mathcal D_N(a).$$
A corresponding statement holds for the region $\mathcal D_N'(a)$
close to the critical value $F_a(z_0)$
\end{lem}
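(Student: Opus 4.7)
The plan is to assemble the boundary length and position estimates already derived in the preceding paragraphs into a clean containment statement, first for $\mathcal D_N'(a)$ and then, by pulling back once under $F_a$, for $\mathcal D_N(a)$. The region $\mathcal D_N'$ has three boundary pieces: the two image arcs $F_a(\gamma_E^1)$ and $F_a(\gamma_E^2)$ issuing from either side of the critical value, and the stable leaf $\gamma_3'$ of the vector field $e_{[N/10]}$ through $F_a(\gamma(\rho))$ that caps off the region on the $W^u(\hat z)$-side. The choice of $\rho'$ was arranged precisely so that $F_a(\gamma(-\rho'))$ lies on this same stable leaf, which is what makes $\mathcal D_N'$ a well-defined trapping candidate.

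First I would collect the three estimates already proved: the length of $F_a^N(\gamma^1)$ is bounded by $\tfrac{3}{100} D_N^{-1}$, the length of $F_a^N(\gamma^2)$ satisfies the same bound by the symmetric computation, and $\operatorname{diam}(F_a^N(\gamma^3))\leq \tfrac{2}{100} D_N^{-1}$ using the contraction bound of Lemma \ref{contractive-field}~(b) together with the fact that $\gamma^3_E$ already has length $(Cb)^E$. Summing these gives $\operatorname{diam}(F_a^{N-1}(\mathcal D_N'))\leq \tfrac{5}{100} D_N^{-1}$, and combining the angle estimate \eqref{eq:angle1} with the vertical distance bound \eqref{eq:dist} shows that this set lies in an $O(b^{N-1})$-neighborhood of $\gamma$.

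Next I would apply one more iterate of $F_a$. Since $\|DF_a\|_{C^1}\leq 5$ on the relevant region, the full image $F_a^N(\mathcal D_N')$ has diameter at most $\tfrac{1}{4} D_N^{-1}$ and is still $O(b^{N-1})$-close to $F_a(\gamma)$, hence lies well inside $\mathcal D_N'$, which by construction extends horizontally at least $\tfrac{1}{10} D_N^{-1}$ around the critical value and is bounded on the $W^u$-side by the leaf $\gamma_3'$ at distance comparable to $\rho\cdot D_N$. Invariance of $\mathcal D_N$ under $F_{a,b}^N$ is then immediate: if $z\in\mathcal D_N$, then $F_a(z)\in\mathcal D_N'$, so $F_a^N(F_a(z))\in \mathcal D_N'$, which means $F_a^N(z)\in F_a^{-1}(\mathcal D_N') = \mathcal D_N$.

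The principal subtlety, which is where I would concentrate the verification, is the containment of the $\gamma_3'$-side of the boundary after $N-1$ further iterates: here one must use that $F_a(\gamma(\rho))$ and $F_a(\gamma(-\rho'))$ were placed on a common leaf of $e_{[N/10]}$, so that Lemma \ref{lem:stablefoliation} together with Lemma \ref{contractive-field}~(b) yields $\operatorname{diam}(F_a^{N-1}(\gamma_3'))\leq (Cb)^{N/10}$, which is absorbed by the length budget $\tfrac{5}{100} D_N^{-1}$ above. Once this is checked, uniformity in $a\in\tilde\omega_0''$ follows because all the derivative and distortion constants used — from Lemma \ref{par-phase-dist1}, Corollary \ref{cor:pardist}, and the splitting estimates \eqref{eq:modulus}–\eqref{eq:angle2} — are uniform over a single partition element of $\mathcal P_N$.
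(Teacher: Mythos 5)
Your proposal is correct and follows the paper's own argument essentially step for step: the lemma is stated there as a summary of the preceding computation, which consists of exactly the three boundary estimates you collect (length $\leq\frac{3}{100}D_N^{-1}$ for each of $F^N(\gamma^1)$, $F^N(\gamma^2)$ and diameter $\leq\frac{2}{100}D_N^{-1}$ for the image of the stable cap $\gamma_3$), the resulting diameter bound $\frac{5}{100}D_N^{-1}$ for $F^{N-1}(\mathcal D_N')$ at distance $O(b^{N-1})$ from $\gamma$, the final iterate using $\left\|DF\right\|_{C^1}\leq 5$, and the conjugation $\mathcal D_N=F^{-1}(\mathcal D_N')$. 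There is no substantive difference in route.
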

\begin{figure}[h]
\centering
\includegraphics[width=0.9\textwidth]{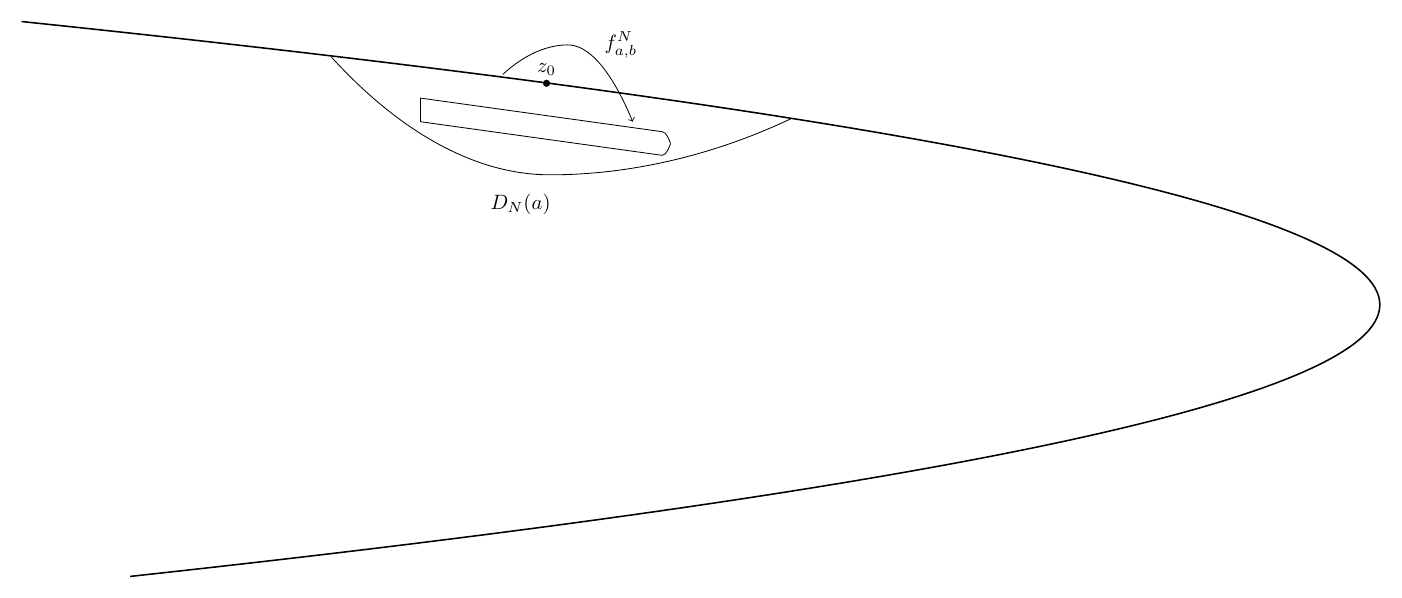}
\caption{The invariant region at the critical point}
\label{Fig6}
\end{figure}

\begin{lem}\label{contraction}
There exists an integer $k$ such that, for all $a\in\tilde\omega_0''$, $F_{a,b}^{Nk}$ contracts.
\end{lem}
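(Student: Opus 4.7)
The plan is to establish a uniform bound $\|DF_a^N(p)\|<1$ on $\mathcal D_N$ (in fact $\le a/4+o(1)$). Then $\|DF_a^{Nk}\|\le\|DF_a^N\|_\infty^k<1$ for every $k\ge 1$, so any $k\ge 1$ works (indeed $k=1$ already suffices, with larger $k$ only improving the contraction ratio). I would evaluate $DF_a^N(p)$ by following a tangent vector through the three phases already set up in constructing $\mathcal D_N$: the single fold near $z_0$, the length-$E$ escape along the critical orbit, and the length-$E'$ return passing near $\hat z$.

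For $p=\gamma(s)+\eta\,e^{(N)}$ with $|s|\le \rho=\frac{1}{10\,D_N}$ (using $\rho'\simeq\rho$, by the parabola-like shape of $F(\gamma)$ at $z_1$ from Lemma \ref{unstable} together with the near-vertical stable leaves bounding $\mathcal D_N'$) and $|\eta|$ exponentially small in $N$ (from the thinness of $\mathcal D_N'$ in the stable direction), decompose a unit tangent vector as $v=v_u w_0+v_s e^{(N)}$. The fold formula $\tau_1(s)=\alpha(s)e_{E-1}+\beta(s)(1,0)^T$ with $|\beta(s)|\le\frac{5a}{2}|s|$ (used in the preceding lemma), the Collet--Eckmann expansion $\|DF^{N-1}(z_1)(1,0)^T\|\sim D_N$ from Assertion 4(i) of Subsection \ref{ss:twod}, the contraction Lemma \ref{contractive-field}(b) crushing the $e_{E-1}$ component, and the analogous control over the remaining $E'$ iterates via Lemma \ref{lambda2} and Lemma \ref{lem:stablefoliation} together yield
$$
DF_a^N(p)\,w_0\approx \beta(s)\,D_N\,w_0(F^N(p))+O(b^{cN}),\qquad \|DF_a^N(p)\,e^{(N)}\|=O(b^{cN}).
$$

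Since the basis $(w_0,e^{(N)})$ is nearly orthogonal ($W^u$ is $C^2(b)$ by Lemma \ref{unstable}; $e^{(N)}$ has slope $\simeq 2ax/\sqrt b$ by Subsection \ref{stablefoliation}), the operator norm is controlled by the two singular values:
$$
\|DF_a^N(p)\|\le|\beta(s)D_N|+O(b^{cN})\le\tfrac{5a}{2}\cdot\tfrac{1}{10\,D_N}\cdot D_N+O(b^{cN})=\tfrac{a}{4}+o(1)<1
$$
for $a<2$. Hence $F_a^N$ is itself a Lipschitz contraction on $\mathcal D_N$ with ratio strictly below $1/2$, and the lemma follows with any $k\ge 1$, giving ratio $\le(a/2)^k$. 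The main technical obstacle is making this pointwise bound uniform over the full two-dimensional $\mathcal D_N$ rather than only on the one-dimensional $\gamma\subset W^u$: one must control the perpendicular displacement $\eta$ and verify that the $(w_0,e^{(N)})$-splitting at $p$ is mapped by $DF_a^N$ into a comparable splitting at $F_a^N(p)\in\mathcal D_N$, both of which follow from Lemmas \ref{lemma6.4} and \ref{contractive-field}(a) applied with the escape and contraction parameters already fixed in the construction.
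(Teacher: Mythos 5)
There is a genuine gap: your central claim that $\|DF_a^N(p)\|<1$ uniformly on $\mathcal D_N$ is false, and with it the conclusion that $k=1$ suffices. The error is in the line ``$DF_a^N(p)\,w_0\approx\beta(s)\,D_N\,w_0(F^N(p))+O(b^{cN})$''. The coefficient $\beta(s)$, with $|\beta(s)|\le\frac{5a}{2}|s|$, is the horizontal component of the \emph{tangent vector of the image curve} $F(\gamma)$ at arclength $s$; it measures how the curve $\gamma$ (hence the region) is folded and shortened, and it is what the construction uses to prove the set inclusion $F^N(\mathcal D_N')\subset\mathcal D_N'$. It is \emph{not} the coefficient of $DF_a^N(p)$ acting on the unit horizontal vector $w_0$ at the point $p=\gamma(s)$. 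That vector is expanded to norm $\sim D_N$ along the excursion, and the fold at the return only annihilates its \emph{horizontal} component (reducing it to $\le\tfrac12$, because the return lands within $\lesssim D_N^{-1}$ of the critical point); it simultaneously deposits a component along the contracting direction $e_n$ at the image point of size up to $\sim b^{\theta}D_N$ for some $\theta\le 3/2$ (for the H\'enon map, the entry $b$ of $DF$ times the expanded norm $D_N$). Since $D_N\ge e^{\kappa N}$ and $N$ is in no way bounded by $\log(1/b)$, this transversal component is in general $\gg 1$, so $\|DF_a^N\|\gg 1$ and $F_a^N$ itself is not a contraction.

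This is precisely why the paper's proof tracks the pair of coefficients in the splitting $v=\alpha\, e_n+\beta\, w_0$ through the transition matrix
$A=\begin{pmatrix}5^{N-n}b^n & D_N\\ 5^{N-n}b^n & \tfrac12\end{pmatrix}$:
the huge off-diagonal entry $D_N$ records exactly the stable component your estimate drops. The matrix is essentially triangular, so its spectral radius is $\le\tfrac12$, but the large $e_n$-component produced at one return is destroyed only during the \emph{next} period by the factor $5^{N-n}b^n$; one must iterate until the accumulated $(\tfrac12)^k$ on the horizontal part beats the single factor $D_N$, which is why the paper takes $k$ with $(\tfrac12)^kD_N^2<1$, i.e.\ $k\sim 2\log_2 D_N\sim N$, rather than $k=1$. (A secondary point: near the critical point $z_0$ the contracting direction has slope $\approx 2ax/\sqrt b$ with $x\approx 0$, so your frame $(w_0,e^{(N)})$ at $p\in\mathcal D_N$ is nearly degenerate rather than nearly orthogonal; the paper works at $z\in\mathcal D_N'$ near the critical \emph{value}, where the splitting is uniformly transversal.)
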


\begin{proof}
Take an arbitrary point $z\in\mathcal D'_{N}(a)$ and as in Subsection
\ref{ss:splitting}, consider the unit vector $$v=\alpha_0 e_n(z)+\beta_0w_1,$$ where $w_0=\begin{brsm}1\\0\end{brsm}$
and $e_n(z)$ is the contracting direction of
order $n=\left[\frac{N}{10}\right]$ at $z$. 
Consider the decomposition of $DF^N(z)v$ as $$DF^N(z)v=\alpha_0 DF^N(z)e_n(z)+\beta_0 w_{N+1}.$$
Observe that, at the first return time $N$, $e_n(z)$ is mapped to $DF^{N}(z)e_n(z)$ with 

\begin{equation}\label{contres}
\left\|DF^N(z)e_n(z)\right\|\leq 5^{N-n}b^n.
\end{equation}

Let us decompose $\alpha_0 DF^N(z) e_n(z)$ as 

$$\alpha_0 DF^N(z) e_n(z)=\alpha_1^s e_n\left(F^N(z)\right)+\beta_1^s
w_1,$$ where, by (\ref{contres}), $|\alpha_1^s|,|\beta_1^s|\leq
5^{N-n}b^n |\alpha_0|$.

Observe now that $\left\|DF^Nw_1\right\|= D_N$. As a  consequence
$$DF^N(z)\beta_0 w_0=\alpha_1^u e_n\left(F^N(z)\right)+\beta_1^u
w_0,$$
where $|\alpha_1^u|\leq D_N|\beta_0|$ and
$|\beta_1^u|\leq\frac{5}{10}\frac{1}{D_N}D_N |\beta_0|$.
Using the  notation $\alpha_\nu=(\alpha_\nu^u,\alpha_\nu^s)$,
$\beta_\nu=(\beta_\nu^u,\beta_\nu^s)$, it follows that 

 $$\left\{
 \begin{matrix}
 |\alpha_1|&\leq&|\alpha_1^s|+|\alpha_1^u|&\leq&5^{N-n}b^n|\alpha_0|+D_N|\beta_0|,\\
 |\beta_1|&\leq&|\beta_1^s|+|\beta_1^u|&\leq& 5^{N-n}b^n|\alpha_0|+\frac{5}{10}|\beta_0|.
 \end{matrix}
 \right.$$

 Let $A$ be the matrix 
Observe that $A$ has spectral radius at most $\frac{1}{2}$.
 Finally we choose $k>0$ such that $\left(\frac{1}{2}\right)^kD_N^2<1$. Then $A^k$ is a contraction and therefore also $DF^{Nk}$ is a contraction. 
\end{proof}

\section{Capturing of a new critical point}\label{sec:capturing}
The next step in the construction is to create a new attractor for the
same parameter values of maps with a sink, see Section
\ref{sink}. This attractor can be another sink or a strange
attractor. In order to do so, we need to select another critical point
and follow its evolution for the same parameter values as those of the  
first sink constructed in the previous section.


It is
important that we can use the binding critical points for the intitial
critical point. By chosing its distance appropropriately
$z_\nu(\omega)$  will
follow the intitial critical point and the new critical point  will
still be bound to the first at its first return time $N$. At this time
there will be a secondary bound period after which the secondary
critical point again is bound. After the third bound period we will
essentially be in a situation corresponding to the intial inductive
situation in \cite{BC2}, \cite{MoraViana}. Using the machinery of
\cite{BC2}, we will prove that the new critical point also will reach
an escape situation. At this point we will be able to choose
parameters which go through an unfolding of a homoclinic
tangency. Following  \cite{PalisTakens} and \cite{MoraViana}, this
will allow to create a new Henon-like family and to consequently  set
up the inductive procedure. More precisely, to this new Henon-like
family, one could apply Section 3 to create a new sink or \cite{MoraViana} to
create a strange attractor. 
\begin{figure}[h]
\centering
\includegraphics[width=0.9\textwidth]{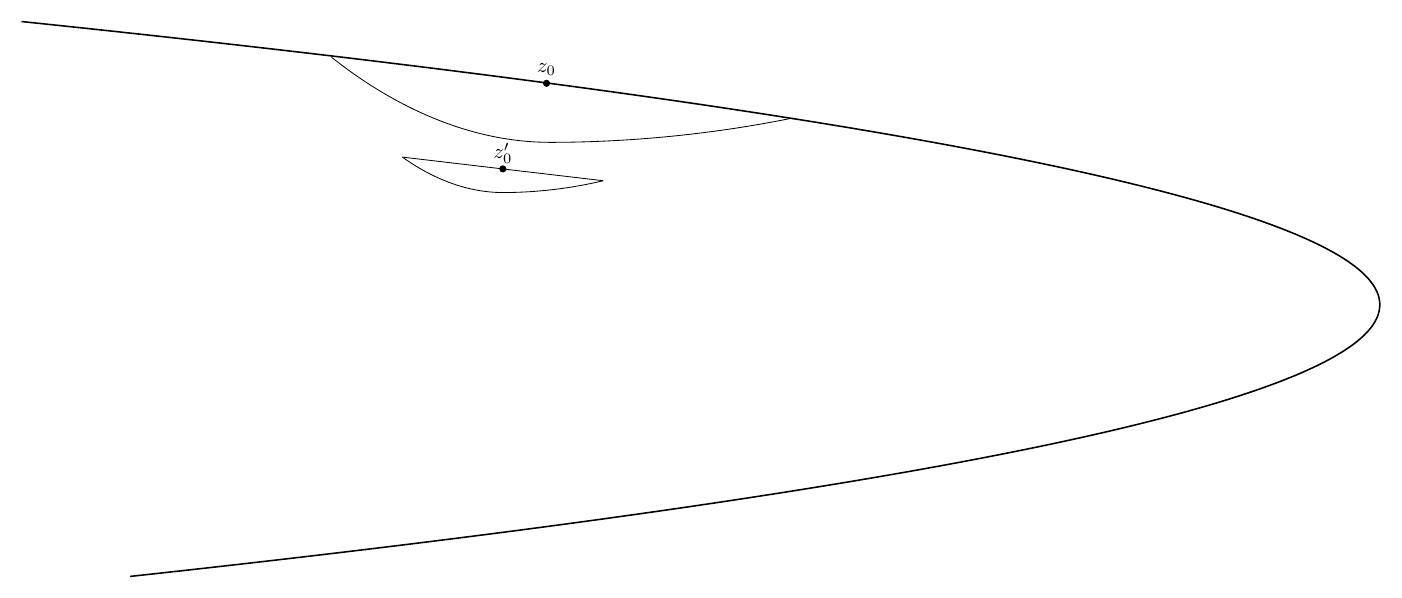}
\caption{Capturing of the second critical point}
\label{Fig3New}
\end{figure}

Our aim is first to capture a new critical point $z'_0$ at a specific distance
to $z_0$. We will show that the critical point $z_0$ and the
segment $W^u(\hat z)$ are accumulated by leaves of $W^u(\hat z)$ which
contain other critical points.
Fix $a\in\omega=\tilde{\omega}_0''$ and let $z_0=z_0(\omega)$ be a
critical point. We select a segment $L$ of the unstable manifold of
length $2\sigma^{n_1}$ aroung $\hat{z}'$, see Lemma \ref{stablemanifold}, where $n_1$ is a prescribed 
integer. By Lemma \ref{stable-foliation} and Lemma
\ref{contractive-field} it follows that the image $F^{n_2}(L)$ has
length $\approx 2\sigma^{n_1}\cdot(2a)^{n_2}$. By adjusting $n_1$ and
$n_2$, we obtain a sequence of long leaves $\gamma_j$ which accumulate
on the first leg of $W^u(\hat{z})$ restricted to $-\frac12\leq x \leq \frac12$.

This is formulated in the next lemma, where ${\rm dist}_{\rm
	v}(\hat{z}_0,z_0)$ denotes the vertical distance between the leaves of the unstable manifold containing the critical points $\hat{z}_0$ and $z_0$.

\begin{lem}\label{lem:newcapture} There are constants $C_1$, $C_2$ such that
	for  all $j\geq 16$ there is a
	critical point $\hat{z_0}$ and a corresponding segment $\hat{\gamma}^u$ containing $\hat{z_0}$

	\begin{equation}\label{eq:spacingofwuleaves}
	C_1\left(\frac{\hat{d}}{2a}\right)^{j+1}\leq \text{\rm dist}_{\rm
		v}(\hat{z}_0,z_0)\leq C_2\left(\frac{\hat{d}}{2a}\right)^{j}
	\end{equation}
where $\hat{d}=\det DF(\hat z)$.	
\end{lem}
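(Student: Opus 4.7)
The plan is to realize the accumulating leaves of $W^u(\hat{z})$ as forward iterates of a carefully chosen subsegment of $W^u(\hat{z})$ near the transverse homoclinic point $\hat{z}'$ supplied by Lemma \ref{stablemanifold}, and to read off the spacing in \eqref{eq:spacingofwuleaves} directly from the stable contraction rate $|\lambda_s|=\hat{d}/|\lambda_u|$, which is comparable to $\hat{d}/(2a)$.

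First I would fix the downward leg $\gamma^s_a$ of $W^s(\hat{z})$ and pick a $C^2(b)$ segment $L_0\subset W^u(\hat{z})$ of definite size, centred at $\hat{z}'$, which crosses $\gamma^s_a$ transversally with slope bounded below by $K/\sqrt{b}$ (Lemma \ref{stablemanifold}). Using the quantitative inclination lemma in the form of Lemma \ref{lambda2}, together with the norm and angle propagation estimates of Lemma \ref{lemma6.4}, there is a time $m_0$ determined by $L_0$ such that $F^{m_0}(L_0)$ is a long $C^2(b)$ curve which shadows the first leg of $W^u(\hat{z})$ in the window $|x|\leq 1/2$ at a definite vertical distance $d_0>0$.

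Next I would iterate $j$ further times. Away from the turning region, the dynamics near the first leg is essentially one-dimensional, expanding at rate $|\lambda_u|\approx 2a$ along the leg and contracting at rate $|\lambda_s|$ in the transverse direction, with $|\lambda_u\lambda_s|=\hat{d}$. Consequently $F^{m_0+j}(L_0)$ is a long $C^2(b)$ curve shadowing the first leg of $W^u(\hat{z})$ at vertical distance comparable to $d_0\cdot|\lambda_s|^j$, hence comparable to $(\hat{d}/(2a))^j$. The angle and norm propagation of Lemma \ref{contractive-field}(a) and the stable-foliation picture of Lemma \ref{stable-foliation} guarantee that, once $j\geq 16$, this distance lies in the interval $[C_1(\hat{d}/(2a))^{j+1}, C_2(\hat{d}/(2a))^j]$. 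On each such leaf $\hat{\gamma}^u:=F^{m_0+j}(L_0)$ I would define the critical point $\hat{z}_0$ exactly as for $z_0$ on the first leg: by Lemma \ref{unstable} the forward image $F_a(\hat{\gamma}^u)$ is an approximate parabola of $C^2(b)$-type; Lemma \ref{lem:stablefoliation} supplies a quadrilateral foliated by integral curves of $e^{(n)}$ which contains the fold; and the tangency of this vector field with $\hat{\gamma}^u$ pulled back one step defines $\hat{z}_0$.

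The hard part will be the two-sided bound. The upper bound $C_2(\hat{d}/(2a))^j$ is essentially automatic from $j$ iterates of stable contraction applied to the initial vertical displacement $d_0$. The lower bound $C_1(\hat{d}/(2a))^{j+1}$ requires choosing $L_0$ so that its normal component to $\gamma^s_a$ is bounded below by a definite constant, and then propagating this lower bound through $j$ iterates using the sharp comparison in Lemma \ref{lemma6.4}(a), which requires $\sqrt{b}$ to be small with respect to $\kappa$. A secondary subtlety is to verify that the newly-defined $\hat{z}_0$ on $\hat{\gamma}^u$ is not confused with the fold of a different generation of $W^u(\hat{z})$ that happens to pass through the same region; this is where the hypothesis $j\geq 16$ plays its role, namely it ensures that $(\hat{d}/(2a))^j$ is strictly smaller than the contraction-direction tolerances provided by Lemma \ref{contractive-field}, so that the same contractive field $e^{(n)}$ works simultaneously for $z_0$ and for $\hat{z}_0$ and determines them unambiguously.
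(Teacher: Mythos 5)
Your proposal follows essentially the same route as the paper: the accumulating leaves are forward images of subsegments of $W^u(\hat z)$ near the transverse homoclinic point $\hat z'$, and the spacing \eqref{eq:spacingofwuleaves} is read off from the saddle's eigenvalues $\sim 2a$ and $\sim \hat d/(2a)$, since the orbits producing each leaf spend essentially all their time in the linearization domain of $\hat z$. The only imprecision is that the $j$-th leaf is not obtained by iterating a fixed $L_0$ for $j$ extra steps after it already shadows the first leg (where the transverse contraction is no longer $\lambda_s$), but by re-selecting, for each $j$, the subsegment of $L$ at distance $\sim (2a)^{-j}$ from $W^s(\hat z)$ so that it lingers $j$ more iterates near the saddle — the paper's "adjusting $n_1$ and $n_2$".
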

\begin{proof} The exact estimates of \eqref{eq:spacingofwuleaves} is obtained since most of the time is spent in the linearization domain of the saddle point $\hat{z}$ where the eigenvalues are $\sim 2a$ and $\sim \hat{d}/2a$  
	
\end{proof}
\subsection{The new critical point}

Observe that, for each $n$, $\gamma_n$ and $\mathcal F^s_p$ intersects
in a unique point, $z'_0$ and that $p$ depends on $n$. Pick $n$ so
that the vertical
distance $$d_v(\gamma_u,\gamma_n)=d_n=\frac{1}{D^{\eta}_N}$$ for a
suitable $\eta$ satisfying $1<\eta<2$ to be chosen later. 
Moreover, by Lemma \ref{contractive-field}, $(b)$, there exists a constant $K$ close to $1$ so that
$$
\frac{1}{K}\leq\frac{\max_{\pi_1\gamma_n}\left|h_u(x)-h_n(x)\right|}{\min_{\pi_1\gamma_n}\left|h_u(x)-h_n(x)\right|}\leq K
$$
where $h_u$ and $h_j$ are the graphs of $\gamma_u$ and $\gamma_n$ and $\pi_1\gamma_n$ is the projection of $h_n$ on the $x$-axe.
\begin{lem}
Suppose that the horizontal distance satisfies 
$$
d_h(\gamma_u,\gamma_n)=d_n,
$$
then 
$$
d_h(z_0,z_0^{(n)})\leq\sqrt{d_n}
$$
\end{lem}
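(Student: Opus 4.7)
The plan is to exploit the fact that near the critical points both $\gamma_u$ and $\gamma_n$ have an approximately parabolic shape (they are parts of later generations of $W^u(\hat z)$ whose folds contain the critical points, by Lemma \ref{unstable}), together with the tangency definition of the critical point. The bound $\sqrt{d_n}$ in the conclusion is the signature of this parabolic geometry: a transverse perturbation of size $d_n$ moves the vertex of a parabola in the tangential direction by an amount of order $\sqrt{d_n}$.

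First I would choose coordinates adapted to the contractive foliation at $z_0$. By Lemma \ref{unstable} and the $C^2(b)$ bounds on the legs of $W^u(\hat z)$, in these coordinates $\gamma_u$ can be written locally around $z_0$ as $y = \alpha(x-x_0)^2 + O(b^t)$, where $\alpha$ is of order one and $x_0$ is the $x$-coordinate of $z_0$. Using Lemma \ref{contractive-field}(b) and the estimate on $\max/\min |h_u - h_n|$ recalled just above the statement, $\gamma_n$ has the same parabolic form $y = \alpha(x - x_n^{(0)})^2 + C_n + O(b^t)$ with the same leading coefficient $\alpha$ up to a factor close to one, and with $C_n$ and $x_n^{(0)}-x_0$ small perturbations that are controlled in terms of $d_n$.

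Next I would use the tangency definition of the critical point: each critical point $z_0^{(n)}\in\gamma_n$ is (up to negligible corrections) the point where the tangent to $\gamma_n$ is parallel to the most contractive vector field $e_k$, which in our adapted coordinates is the condition that the derivative of the parabola vanishes. This gives $x_n^{(0)}$ as the abscissa of $z_0^{(n)}$, and the relation
$$
\alpha\bigl(x_0 - x_n^{(0)}\bigr)^2 \; = \; C_n + O(b^t)
$$
for the value of $\gamma_n$ at the $x$-coordinate of $z_0$. By assumption the horizontal distance between the curves at this location is $d_n$, which (together with the $C^2(b)$ control) forces $|C_n|\le K\,d_n$, and hence solving the above quadratic gives $|x_0-x_n^{(0)}|\le \sqrt{d_n/\alpha}\le \sqrt{d_n}$ up to constants that can be absorbed.

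The main obstacle will be to make the parabolic normal form precise simultaneously for $\gamma_u$ and $\gamma_n$ with matched quadratic coefficients, since $\gamma_n$ comes from a much later generation than $\gamma_u$; this needs the quantitative closeness of stable directions given by Lemma \ref{lemma6.4} and Lemma \ref{contractive-field}, together with the separation of scales $d_n = 1/D_N^\eta$, $1<\eta<2$, which is exactly what makes the $b^t$ remainders negligible compared with $\sqrt{d_n}$.
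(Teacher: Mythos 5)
The paper's own proof is a one-line citation to Lemma 5 of Section 2.3.1 in \cite{BenedicksYoung1}, so the question is whether your argument reproduces the mechanism behind that lemma. It does not: the geometric picture you start from is the wrong one. The curves $\gamma_u$ and $\gamma_n$ carrying $z_0$ and $z_0^{(n)}$ are the nearly horizontal $C^2(b)$ legs of $W^u(\hat z)$ over $|x|\le \tfrac12$, i.e.\ graphs $h_u,h_n$ with $|h'|,|h''|\le Cb^t$; the approximate parabolas of Lemma \ref{unstable} occur at the critical \emph{values} near $x=1$, not at the critical points near $x=0$. A critical point is not the vertex of a fold of its own leaf; it is the point where the tangent of the flat leaf coincides with the contractive direction $e_k$, essentially the solution of $2ax=h'(x)$. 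Consequently the relation $\alpha(x_0-x_n^{(0)})^2=C_n+O(b^t)$ has no basis. Even if one straightens the $e_k$-foliation so that each leaf becomes a parabola of curvature $\approx 2a$ with vertex at its critical point, the two parabolas agree only up to $O(b^t)$, and $b^t$ is a \emph{fixed} constant once $b$ is chosen, while $d_n=D_N^{-\eta}$ is exponentially small in $N$. Thus the $O(b^t)$ remainder is enormously larger than both $d_n$ and $\sqrt{d_n}$ --- the opposite of your closing claim that the separation of scales makes the $b^t$ errors negligible --- and the proposed quadratic equation yields no bound at all.

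The actual source of the square root is dynamical. The two inputs are: (i) the tangent directions of $\gamma_u$ and $\gamma_n$ at points with the same $x$-coordinate differ by an angle at most $C\sqrt{d_n}$; this is the $(\sqrt{\sigma})^{\nu}$ angle estimate of Lemma \ref{lemma6.3}(b) and Lemma \ref{lemma6.4}(b) applied to the two unstable leaves, whose orbits shadow each other exponentially, and the halving of the exponent there is precisely where $\sqrt{d_n}$ is born; (ii) along a $C^2(b)$ leaf the angle between its tangent and $e_k$ grows essentially linearly, with rate $\approx 2a$, in the horizontal distance from the critical point of that leaf (Lemma 5.5 and Corollary 5.7 of \cite{BC2}; equivalently, differentiate $2ax-h'(x)$). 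Evaluating at the point of $\gamma_n$ vertically corresponding to $z_0$ and combining (i) and (ii) with the weak $y$-dependence of $e_k$ from Lemma \ref{contractive-field} gives $2a\, d_h(z_0,z_0^{(n)})\le C\sqrt{d_n}$, which is the claim. Your plan does invoke Lemma \ref{lemma6.4}, but only to control error terms; it must instead carry the main term.
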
 
\begin{proof}
This is a reformulation of Lemma $5$, Section $2.3.1$ of \cite{BenedicksYoung1} and the same proof applies also in our setting.
\end{proof}
\begin{lem}\label{lem:secondbd}
At time $N$, $z'_N(\omega)=F^{N}(z'_0)$ is located in horizontal position to $z_0$. Moreover there exists a constant $K$ close to $1$ so that
$$
\frac{1}{K}d_h(z_0,z'_N)\leq d_h(z_N,z'_N)\leq K d_h(z_0,z'_N).
$$
Furthermore
$$
\frac{1}{K_1}D_N^{1-\eta}\leq d_h(z_0,z'_N)\leq K_1 D_N^{1-\eta}
$$
for some constant $K_1$ close to $1$.
\end{lem}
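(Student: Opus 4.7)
The plan is to control the orbit of $z'_0$ through its bound period to $z_0$, exploit the quadratic behaviour at the critical point to convert the initial horizontal separation $\sqrt{d_n}$ into a linear separation $d_n$ after one iterate, transport this by $DF^{N-1}$ along the critical orbit of $z_0$, and finally compare the endpoint with $z_0$ using Lemma \ref{returnlemma}.

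First I would verify that $z'_0$ is bound to $z_0$ for at least $N$ iterates. The previous lemma gives $d_h(z_0,z'_0)\leq\sqrt{d_n}=D_N^{-\eta/2}$, and since $D_N\gtrsim e^{\kappa N}$ the horizontal separation is exponentially small in $N$, so the bound-period criterion from Subsection \ref{ss:splitting} delivers a bound period of length at least $N$. Throughout this bound period the modulus and angle distortion estimates \eqref{eq:modulus} and \eqref{eq:angle2} for the $w_\nu^*$ vectors of $z'_0$ relative to those of $z_0$ are valid, with the distortion sum $\sum_j\Delta_j/d_j$ bounded because $\Delta_j$ is exponentially small in $j$ while $d_j\geq e^{-\alpha j}$ with $\alpha$ small compared with the exponential rate of decay of $\Delta_j$.

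Next I would carry out the quadratic amplification at time $1$. Writing $F(z_0+h)-F(z_0)=DF(z_0)h+Q(h)$, the linear contribution along the tangent direction of $W^u(\hat z)$ vanishes at the critical point (this is the defining property of $z_0$), so that the quadratic term $Q(h)$ dominates and yields $|F(z'_0)-F(z_0)|\asymp d_h(z_0,z'_0)^2\asymp d_n$. The remaining $N-1$ iterates expand this displacement essentially along the $w_\nu^*$-direction: by the splitting algorithm together with Lemma \ref{par-phase-dist1}, the growth factor from time $1$ to time $N$ is comparable to $D_N/D_1\asymp D_N$, which gives $|z'_N-z_N|\asymp d_n\cdot D_N=D_N^{1-\eta}$. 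Since by construction $z'_0$ is placed as a binding point to $z_0$, the return time $N$ is a free return of the orbit of $z'_0$ and $z'_N$ is horizontally aligned with $z_N$; together with the angle estimate \eqref{eq:angle2} this shows that $d_h(z_N,z'_N)\asymp D_N^{1-\eta}$.

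Finally I would combine this with Lemma \ref{returnlemma}(iii), which provides $d_h(z_0,z_N)\leq (50D_N)^{-1}$. Since $\eta<2$ we have $D_N^{-1}=o(D_N^{1-\eta})$, so the triangle inequality yields $d_h(z_0,z'_N)=d_h(z_N,z'_N)(1+o(1))$; this gives both the horizontal alignment of $z'_N$ with $z_0$ and the two comparability statements, with $K$ and $K_1$ close to $1$ once $D_N$ is large. The principal technical obstacle is the quadratic amplification step: one must verify that the displacement from $z_0$ to $z'_0$, which has a horizontal component of size $\sqrt{d_n}$ and a vertical component of size $d_n$ between the two leaves of $W^u(\hat z)$, is squared by $F$ in the right way, and that the $C^2(b)$-parabolic shape of $W^u(\hat z)$ from Lemma \ref{unstable} does not generate an unexpected cancellation in $Q(h)$. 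Checking this amounts to repeating the bound-period analysis of \cite{BC2} and \cite{MoraViana} for the specific pair $(z_0,z'_0)$, controlling the initial horizontal/vertical split via Lemma \ref{contractive-field}.
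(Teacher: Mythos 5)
Your overall architecture — establish that $z'_0$ stays bound to $z_0$ up to time $N$, produce a displacement of size $d_n=D_N^{-\eta}$ after one iterate, transport it by the bound-period distortion estimates so that it is multiplied by $\|w_N\|\asymp D_N$, and then compare with $z_0$ via Lemma \ref{returnlemma}(iii) using $D_N^{-1}=o\bigl(D_N^{1-\eta}\bigr)$ for $\eta<2$ — is the same as the paper's. The gap is in the step that produces the two-sided estimate $|z'_1-z_1|\asymp d_n$. You obtain it from the quadratic amplification $|F(z'_0)-F(z_0)|\asymp d_h(z_0,z'_0)^2\asymp d_n$, but the preceding lemma gives only the \emph{upper} bound $d_h(z_0,z'_0)\leq\sqrt{d_n}$; the horizontal offset between the two critical points is not bounded below by $\sqrt{d_n}$ and may be far smaller (even zero). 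Consequently your mechanism yields only $d_h(z_N,z'_N)=O\bigl(D_N^{1-\eta}\bigr)$ and loses the lower bound $\frac{1}{K_1}D_N^{1-\eta}\leq d_h(z_0,z'_N)$, which is the part of the lemma that is actually needed later (it guarantees that the captured orbit eventually separates from the first one at a controlled rate).

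The correct source of the lower bound is the \emph{vertical} separation $d_n$ between the leaves $\gamma_u\ni z_0$ and $\gamma_n\ni z'_0$, which enters \emph{linearly}, not quadratically: since $DF=\begin{pmatrix}A&B\\C&D\end{pmatrix}$ sends an essentially vertical displacement to an essentially horizontal one of comparable size, the inter-leaf gap $d_n$ becomes, after one iterate, a component of size $\asymp d_n$ along $\begin{brsm}1\\0\end{brsm}$, which lies in the expanding direction and is then multiplied by $\|w_{N-1}\|\asymp D_N$. The paper implements this by joining $z_0$ to $z'_0$ with a curve $\Gamma_0$, decomposing its tangent vector as $\tau=\alpha(y)\,e_N+\beta(y)\begin{brsm}1\\0\end{brsm}$, and showing that along the vertical segment from $z_0$ to $\gamma_n$ one has $\frac{1}{K}d_n\leq\int_{y'_n}^{y_n}\beta(y)\,dy\leq K\,d_n$; the piece of the path running along $\gamma_n$, of horizontal extent $\leq\sqrt{d_n}$, contributes only an $O\bigl((\sqrt{d_n})^2\bigr)=O(d_n)$ correction via the quadratic fold, which is exactly the cancellation issue you flag at the end of your sketch. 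So the roles should be reversed relative to your write-up: the linear transport of the vertical gap is the main term giving both bounds, and the quadratic behaviour at the critical point is only used to control the error coming from the horizontal offset.
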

\begin{proof}
Let $\Gamma_0$ be a curve joining $z_0$ and $z'_0$ and let $\Gamma_1$
be its image joining $z_1$ and $z'_1$ close to the critical value. On
$\Gamma_0$, using Subsection \ref{ss:splitting}, we decompose the tangent vector as
$$
\tau(z)=\alpha(y)e_N(z)+\beta(y)\begin{brsm}1\\0\end{brsm}
$$
with $z=(x,y)\in\Gamma_0$. Consider now the vertical segment from $z_0$ to $\gamma_n$ and let $y_n,y'_n$ be the $y$-coordinates of its end points. Then 
$$
\frac{1}{K}d_n\leq\int^{y_n}_{y'_n}\beta(y)dy\leq K d_n
$$ 
with $K$ a constant close to $1$. Use the notation $w_j=DF^j(z_0)\begin{brsm}1\\0\end{brsm}$ and apply the distortion estimates during the bound period for $w_j$, see Lemma $10.2$ in \cite{MoraViana}, which gives
$$
\frac{1}{K}D_N\leq\left\|w_N\right\|\leq K D_N.
$$
Furthermore 
$$
\frac{1}{K}\frac{1}{D^{\eta}_N}\leq d_n\leq K \frac{1}{D^{\eta}_N}.
$$
This proves the last inequality of the lemma. 
\end{proof}
\smallskip
Observe now that, by Corollary $5.7$ in \cite{BC2}, $w_N$ and the
tangent vector $\tau_N$ are aligned with $\gamma_u$ forming an angle
smaller than $d^4_n$. Note that Lemma $5.5$ and Corollary $5.7$ in
\cite{BC2} do not depend on the special form of the map and applies
also in our context. As final remark, one can notice that the
distortion during the bound period are stated in the case of phase
space dynamics. Moreover they are valid also in the parameter
dependent setting because of the uniform comparison between the $x$
and $a$-derivatives, see Corollary \ref{cor:pardist}.
\paragraph{The second bound period from time $N$ to time $2N$.} Note
that, for $\eta$ close to $2$, $z'_{2N}(\omega)$ will still be bound
to $z_N$ and that $z'_N(\omega)$ is located in horizontal position
with respect to $z_0$. We repeat the same procedure as in Lemma \ref{lem:secondbd}. Join $z_0$ and $z'_N(\omega)$ by a curve $\Gamma'_0$ and decompose the tangent vector of $\Gamma'_1=F(\Gamma'_0)$ as 
$$
\tau(s)=A(s)e_N(s)+B(s)\begin{brsm}1\\0\end{brsm},
$$
where $B(s)$ satisfies $\frac{3a}{2}s\leq B(s)\leq\frac{5a}{2}s $, see Lemma $9.6$ in \cite{MoraViana} and Assertion $4(c)$ in \cite{BC2}. Again by the bound distortion lemma in \cite{MoraViana} (Lemma $10.2$), $d(z_N,z'_{2N}(\omega))$ and $d(z_0,z'_{2N}(\omega))$ can be estimated from below and above using
$$
\frac{1}{K}s^2D_N\leq\left|\left(\int_0^sB(t)dt\right)w_N\right|\leq Ks^2 D_N
$$
where $s=d(z_0,z'_{2N}(\omega))$. A similar statement for points in horizontal position appear in \cite{BC2}, Assertion $4$, $(b)$ and $(c)$ and in \cite{MoraViana}, Corollary $10.7$. We conclude that
\begin{itemize}
\item[(a)] $d(z_0,z'_{2N}(\omega))$ is comparable with a fixed constant to $\left(D^{1-\eta}_N\right)^2D_N=D_N^{3-2\eta}$,
\item[(b)] $|z'_{2N}(\omega)|$ is comparable to $|z'_{N}(\omega)|D^{1-\eta}_N D_N$, which is comparable to $D^{1-\eta}_N$.
\end{itemize}

\smallskip
Let us now study the period when  $z'_{2N+\nu}(\omega)$, $\nu\geq 0$,
is bound  to $z_{0}(\omega)$. 

We define the preliminary binding period $p_1$ as the maximal integer so that, for all $\nu\leq p_1$,
$$
\left|z'_{2N+\nu}(\omega)-z_{\nu}\right|\leq e^{-\beta\nu}.
$$
In principle $p_1$ could be infinite, but this is not the case.
\begin{lem}\label{lem:binding}
The preliminary binding period $p_1<\infty$.
\end{lem}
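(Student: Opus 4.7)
The plan is to derive a contradiction from the assumption $p_1 = \infty$, by exploiting the Collet--Eckmann expansion along the critical orbit of $z_0$, which eventually forces $z'_{2N+\nu}$ away from $z_\nu$ faster than $e^{-\beta\nu}$.

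First I would record the initial data from items (a) and (b) preceding the lemma: after the second bound period, $z'_{2N}(\omega)$ lies in horizontal position to $z_0$ at a distance comparable (up to a fixed constant) to $s_0 := D_N^{3-2\eta}$. Since $\eta$ is close to $2$ and $D_N \geq e^{\kappa N}$, $s_0$ is small but \emph{strictly positive and quantitatively bounded below}, namely $s_0 \gtrsim e^{-(2\eta-3)\kappa N}$. Assume towards contradiction that $p_1 = \infty$, i.e. $|z'_{2N+\nu} - z_\nu| \leq e^{-\beta\nu}$ for every $\nu \geq 0$.

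Next I would apply the distortion results of the splitting algorithm (Subsection \ref{ss:splitting}, equations \eqref{eq:modulus} and \eqref{eq:angle2}, and Lemma 10.2 of \cite{MoraViana}) to the pair $\zeta_0 = z_0$, $\zeta_0' = z'_{2N}$, both regarded as bound to $z_0$ during $[0,\nu]$. The hypothesis $\sum_{j=1}^{\nu}\Delta_j/d_j(z_0) \leq 1/C_0$ is satisfied because by Assertion 4(ii) of \cite{BC2} one has $d_j(z_0) \geq e^{-\alpha j}$, while $\Delta_j \leq e^{-\beta j}$ with $\beta > \alpha$, so the geometric series is dominated by a constant. Consequently $|w_\nu^*(z'_{2N})|$ and $|w_\nu^*(z_0)|$ are comparable, with the angle between them controlled by $b^{1/4}\Delta_\nu$. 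Integrating the tangent field along a horizontal curve of length $s_0$ joining $z_0$ to $z'_{2N}$ and pushing forward by $F^\nu$, the argument of Corollary 10.7 of \cite{MoraViana} (or Assertion 4(b)--(c) of \cite{BC2}) yields
\begin{equation*}
|z_\nu - z'_{2N+\nu}| \gtrsim s_0 \cdot |w_\nu^*(z_0)| \gtrsim s_0 \, e^{\kappa \nu} \simeq D_N^{3-2\eta}\, e^{\kappa \nu}.
\end{equation*}

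Comparing with the binding bound $e^{-\beta\nu}$, we find that the binding inequality is violated as soon as
\begin{equation*}
\nu > \nu_0 := \frac{(2\eta - 3)\,\log D_N}{\kappa + \beta},
\end{equation*}
which is a finite number since $\log D_N \lesssim N$. Hence $p_1 \leq \nu_0 < \infty$, contradicting $p_1 = \infty$. The main technical obstacle is precisely verifying the applicability of the bound-period distortion lemma throughout the range $[0, p_1]$, i.e. ensuring that the accumulated displacement $\Delta_j$ stays within the regime $\Delta_j \ll d_j(z_0)$ so that \eqref{eq:modulus} and \eqref{eq:angle2} remain in force; this however is immediate from the assumed binding inequality $\Delta_j \leq e^{-\beta j}$ together with the basic assumption $d_j(z_0) \geq e^{-\alpha j}$ with $\alpha \ll \beta$. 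The use of Corollary \ref{cor:pardist} guarantees that the estimate passes from phase to parameter dependence uniformly on $\omega$.
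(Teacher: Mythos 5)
Your argument is essentially the paper's: the paper first proves Lemma \ref{lem:quadratic}, which gives the displacement at time $\nu$ as comparable to $\rho^2\|w_\nu\|$ with $\rho$ the initial horizontal separation, and then concludes $p_1<\infty$ because $\|w_\nu\|\geq e^{\kappa\nu}$ grows exponentially while the binding threshold $e^{-\beta\nu}$ decays; you run the same comparison, phrased as a contradiction. One slip worth correcting: your displacement lower bound is linear in the initial separation, $|z_\nu-z'_{2N+\nu}|\gtrsim s_0\|w_\nu^*(z_0)\|$, whereas the correct estimate — and the one actually delivered by Assertion 4(b)--(c) of \cite{BC2}, Corollary 10.7 of \cite{MoraViana}, and Lemma \ref{lem:quadratic} here — is quadratic, $\gtrsim s_0^2\|w_\nu\|$, because the tangent-vector splitting near the fold carries the factor $|B(s)|\sim a|s|$ which, integrated over the horizontal curve of length $s_0$, produces $\sim a s_0^2$ rather than $s_0$. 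This does not affect finiteness of $p_1$ (an exponentially growing quantity still overtakes $e^{-\beta\nu}$), but it changes your explicit $\nu_0$ by a factor of $2$ in the exponent of $D_N$, and the quadratic form of the estimate is what is actually used in the subsequent analysis of the free period. Also, your claim that $\sum_j\Delta_j/d_j(z_0)\leq 1/C_0$ follows ``immediately'' from $\Delta_j\leq e^{-\beta j}$ and $d_j\geq e^{-\alpha j}$ is too quick: that ratio only gives $\sum_j e^{-(\beta-\alpha)j}$, which is bounded but not necessarily below $1/C_0$ for the small values of $\beta-\alpha$ in use; the standard verification uses that $\Delta_j$ starts at the (very small) initial separation and is controlled by the quadratic growth estimate itself, not merely by the binding inequality.
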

\begin{proof}
The proof of this fact will follow after the proof of Lemma
\ref{lem:quadratic}.   

\end{proof}

\begin{lem}\label{lem:quadratic}
Let $\rho=\left|z'_{2N+\nu}(\omega)-z_{\nu}\right|$. If $\nu\geq\nu_0$ is outside of all folding periods, then
\begin{equation}\label{prelboundper}
\frac{3a}{2}\rho^2\left\|w_{\nu}\right\|\leq \left|z'_{2N+\nu}(\omega)-z_{\nu}\right|\leq \frac{5a}{2}\rho^2\left\|w_{\nu}\right\|,
\end{equation}
where $w_{\nu}=DF^{\nu}(z_0)w_0$.
\end{lem}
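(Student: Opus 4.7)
The plan is to exploit the quadratic folding that occurs near the critical point $z_0$ at time one, and then track the resulting vector under $DF^{\nu-1}$ using the splitting algorithm of Subsection \ref{ss:splitting} together with the bound-period distortion estimates. I read the definition of $\rho$ as the initial horizontal separation $\rho=|z_0'-z_0|=|z_{2N}'-z_0|$ at the start of this new binding period (the formula displayed just above Lemma \ref{lem:binding} becomes trivial otherwise, and the proof of Lemma \ref{lem:secondbd} is already structured this way with the role of $s$).

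First I would join $z_0$ to $z_{2N}'(\omega)$ by a short (nearly horizontal) curve $\Gamma_0$ of arclength $\rho$, and look at its image $\Gamma_1 = F(\Gamma_0)$. Since $\Gamma_1$ is a small arc in the fold near the critical value, Lemma 9.6 in \cite{MoraViana} (and Assertion 4(c) in \cite{BC2}) gives the tangent decomposition
\[
\tau_1(s) = A(s)\, e_N(s) + B(s)\, w_0, \qquad w_0=\begin{brsm}1\\0\end{brsm},
\]
with $\tfrac{3a}{2}s \leq B(s) \leq \tfrac{5a}{2}s$ and with $A(s)$ bounded. This is the quadratic fold: the horizontal component of $\tau_1$ is of order $s$ (not of order $1$), which is precisely what produces the $\rho^2$ factor upon integration.

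Next, push the decomposition forward by $DF^{\nu-1}$. By Lemma \ref{contractive-field}(b) the contribution of the contracting direction satisfies $\|DF^{\nu-1} e_N(s)\| \leq (4K/\kappa)(K^2 b/\kappa^2)^{\nu-1}$, hence for $\nu \geq \nu_0$ large and $b$ small it is negligible compared with $\|w_\nu\|$, which grows exponentially by the inductive Collet--Eckmann hypothesis. On the other hand the horizontal vector $w_0$ is mapped to $w_\nu(s) = DF^{\nu}(z_0(s)) w_0$, and the bound-period distortion estimate \eqref{eq:modulus} in Subsection \ref{ss:splitting} (equivalently Lemma 10.2 of \cite{MoraViana}) gives $\|w_\nu(s)\|/\|w_\nu(0)\|\in[1/C,C]$ uniformly in $s\in[0,\rho]$. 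Integrating tangent vectors along $\Gamma_\nu=F^\nu(\Gamma_0)$ therefore yields
\[
|z_{2N+\nu}'-z_\nu| \;=\;\left\|\int_0^\rho \tau_\nu(s)\, ds\right\|
\;=\;\left(\int_0^\rho B(s)\,ds\right)\|w_\nu\|\;(1+o(1)),
\]
and $\int_0^\rho B(s)\,ds\in[\tfrac{3a}{4}\rho^2,\tfrac{5a}{4}\rho^2]$. The hypothesis that $\nu$ lies outside all folding periods is used here in two ways: it guarantees that $w_\nu$ has slope $O(b^t)$, so that the horizontal magnitude controls the full distance; and it prevents cancellations in the splitting algorithm from corrupting the decomposition. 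Adjusting constants by absorption into the distortion factor $(1+o(1))$ then gives the asserted inequality with the stated constants $\tfrac{3a}{2}$ and $\tfrac{5a}{2}$, after taking $\nu_0$ large enough to absorb the initial transient.

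The main obstacle is precisely the bookkeeping of the error terms: one must verify that the contracting contribution $A(s)\,DF^{\nu-1}e_N(s)$, the angle drift of $w_\nu$ off the horizontal, and the distortion of $\|w_\nu\|$ along $\Gamma_0$ are all small relative to the main quadratic term $\tfrac{3a}{2}\rho^2 \|w_\nu\|$. All three smallness statements rely on the binding hypothesis $|z_{2N+\nu}'-z_\nu|\leq e^{-\beta\nu}$ (which bounds the quantity $\Theta_\nu$ entering \eqref{eq:modulus}) and on $\nu$ being outside of folding periods; with these in hand the computation is routine and also feeds directly into the proof of Lemma \ref{lem:binding}, since the lower bound $\tfrac{3a}{2}\rho^2\|w_\nu\|$ eventually exceeds $e^{-\beta\nu}$ by the Collet--Eckmann growth of $\|w_\nu\|$, forcing $p_1<\infty$.
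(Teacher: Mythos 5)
Your proposal is correct and follows essentially the same route as the paper's proof: the horizontal curve $\Gamma_0$ joining $z_0$ to $z'_{2N}$, the decomposition $\tau_1(s)=A(s)e+B(s)\begin{brsm}1\\0\end{brsm}$ with $\tfrac{3a}{2}s\le|B(s)|\le\tfrac{5a}{2}s$, push-forward by $DF^{\nu-1}$ via the splitting algorithm, and integration over $s\in[0,\rho]$. Your reading of $\rho$ as the initial separation $|z'_{2N}-z_0|$ is the intended one, and the constant mismatch you flag ($\tfrac{3a}{4}\rho^2\|w_\nu\|$ from the integration versus $\tfrac{3a}{2}$ in the statement) is present in the paper itself.
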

\begin{proof}
We introduce an horizontal curve $\Gamma_0$ joining $z_0$ and $z'_{2N}$ with tangent vector $\tau(s)$. The lengh of $\Gamma_{\nu}=F^{\nu}(\Gamma_0)$ is equal to 
$$
\int_0^{\rho}\left\|DF^{\nu}(\Gamma_0(s))\tau_0(s)\right\|ds.
$$
We decompose 
$$
\tau_1(s)=A(s)e_{\nu-1}+B(s)\begin{brsm}1\\0\end{brsm},
$$
and then 
$$
\tau_{\nu}(s)=A(s)DF^{\nu-1}(\Gamma_0(s))e_{\nu-1}+B(s)DF^{\nu-1}(\Gamma_0(s))\begin{brsm}1\\0\end{brsm},
$$
where, by Lemma \ref{prelboundper}
\begin{equation}\label{Bsbounds}
\frac{3a}{2}s\leq \left|B(s)\right|\leq \frac{5a}{2}s,
\end{equation}
see Section $8$ in \cite{MoraViana}. We apply the splitting algoritm from Section $8$, $(i)-(v)$ in \cite{MoraViana} to $DF^{\nu-1}(\Gamma_0(s))$.
If $v$ is outside of it follows from \eqref{Bsbounds} and integrating that 
$$
\frac{3}{4}a\rho^2\left\|w_v\right\|\leq\int_0^{\rho}\left\|\tau_v(s)\right\|ds
\leq \frac{5}{4}a\rho^2\left\|w_v\right\|.
$$
We conclude that Lemma \ref{lem:quadratic} holds.

\end{proof}

{\em Proof of Lemma \ref{lem:binding}}.
By the basic assumption which is part of the induction, see Assertion 4
(ii) in Subsection \ref{ss:twod},

$$
d(z_v(a),\mathcal C)\geq e^{-\alpha v},
$$
and $\rho=d(z_v(a),\mathcal C)$. Since by the induction $||w_\nu||\geq
e^{\kappa \nu}$, $\nu=1,2,\dots,n$, it follows that $p_1<\infty$.
\demo

Suppose now at the time $p_1$ 
$$
\left\|z'_{2N+p_1+1}(\omega)-z_{2N+p_1+1}(\omega)\right\|\geq e^{-\beta(p_1+1)}.
$$

We follow an argument from \cite{BC2}, Subsection 6.2. 
It follows from the basic assumption, see Assertion 4
(ii) in Subsection \ref{ss:twod}, that 
$$
d(z_v(a),\mathcal C)\geq e^{-\alpha v}
$$
that the deepest and longest bound period for $z_j$ satisfies $\tilde p_1\leq 4\alpha p_1$. The next level bound period satisfies $\tilde p_2\leq 4\alpha\tilde p_1$. As consequence the lenght of the combined bound period of $z_{p_1}$ will be less than 
$$
\sum_\nu {\tilde p_{v}}\leq 4\alpha p_1+(4\alpha)^2p_1+\dots=\frac{4\alpha}{1-4\alpha}p_1.
$$
This means that at the time $p$,
$$
3\rho^2\left\|w_p\right\|\geq e^{-\beta p_1}\frac{1}{4^{4\alpha p_1(1-4\alpha)}}.
$$
But $p_1\leq p\leq \left(1+\frac{4\alpha}{1-4\alpha}\right)p_1$. If we chose $\beta=10\alpha$ as in \cite{BC2} we obtain
\begin{equation}\label{firstp}
3\rho^2\left\|w_p\right\|\geq e^{-\frac{3}{4}\beta p_1} 
\end{equation}
 and also
 \begin{equation}\label{firstp2}
3\rho^2\left\|w_p\right\|\geq \rho^2 e^{-\beta p}.
\end{equation}
We can choose $\beta_1$ satisfying
$$
\frac{3}{4}\beta\leq\beta_1\leq\beta
$$
so that we have the estimate
$$
\rho^2\left\|w_p\right\|\geq C^{-1}e^{-\beta_1 p}.
$$
Let us also denote $D_p=\left\|w_p\right\|$. This means that with $p$ as in \ref{firstp2}
$$
C^{-1}e^{-\beta_1 p}\leq D_p\left(D_N^{1-\eta}\right)^2\leq e^{-\beta_1 p}.
$$
On the other hand 
$$
e^{(c_1-\alpha) p}\leq D_p\leq e^{c_1 p}
$$
so we obtain that 
$$
C^{-1}D_p^{-\beta_2}\leq D_p\left(D_N^{1-\eta}\right)^2\leq CD_p^{-\beta_2},
$$
where $\frac{\beta_1}{c_1}\leq\beta_2\leq \frac{\beta_1}{c_1-\alpha}$. Hence 
$$
C^{-1}D_N^{\frac{2(\eta-1)}{1+\beta_2}}\leq D_p\leq CD_N^{\frac{2(\eta-1)}{1+\beta_2}}.
$$
Note that the estimate 
$$C^{-1}D_p^{-\beta_2}\leq\rho^2D_p\leq CD_p^{-\beta_2}$$ 
implies that 
$$C^{-1/2}D_p^{-\frac{1}{2}\beta_2}\leq \rho D_p^{\frac{1}{2}}\leq C^{1/2}D_p^{-\frac{1}{2}\beta_2}$$
and we obtain that 
$$
\left|z'_{2N+p}(\omega)\right|\sim \left|z'_{2N}(\omega)\right|2apD_p\sim 2aD_{N}^{1-\eta}D_p^{\frac{1}{2}-\frac{1}{2}\beta_2}.
$$
We now choose $\eta=\frac{3}{2}+\epsilon$. This means that 
$$
\left|z'_{2N+p}(\omega)\right|\geq 2aD_{N}^{-\frac{1}{2}-\epsilon}D_p^{\frac{1}{2}-\frac{1}{2}\beta_2}=2aD_{N}^{-\frac{1}{2}-\epsilon}D_N^{\left(\frac{1}{2}-\frac{1}{2}\beta_2\right)\frac{2\left(\frac{1}{2}+\epsilon\right)}{1+\beta_2}}.
$$
If $\epsilon=\frac{\beta_2}{2}$ we obtain that 
$2aD_{N}^{-\frac{\beta_2}{2}-\frac{\beta_2}{2}}=2aD_{N}^{-{\beta_2}}$.

We then follow the segment until the next return $2N+p+\ell$ and 
$$
\left|z'_{2N+p+\ell}(\omega)\right|\geq \text{const}\,D_{N}^{-{\beta_2}}.
$$
Since $D_{N}\geq e^{\kappa N}$, we obtain
$$
\left|z'_{2N+p+\ell}(\omega)\right|\geq \text{const}\, e^{-\kappa \beta_2N}
$$
and the free period satisfies $\ell\leq \beta_2\kappa\kappa_1^{-1}N$, where $\kappa_1$ is the Lyapunov exponent associated to the dynamics outside of $(-\delta,\delta)$. Moreover, the time $2N+p+\ell$ is less than or equal to $3N$. We can now relax the condition of the basic assumption, see Subsection \ref{ss:twod} and apply the machinery to a subinterval $\omega'\subset\omega$ which is chosen so that 
 $$
 \left|z'_{2N+p+\ell}(\omega')\right|\geq\frac{1}{4} \left|z'_{2N+p+\ell}(\omega)\right|.
 $$
As a consequence 
$$
\left|z'_{2N+p+\ell}(\omega')\right|\geq  \text{const'}\,e^{-\kappa \beta_2N}.
$$
The corresponding bound period for a return time to a position at horizontal distance $e^{-r'}$ with $r'\leq \beta_2 N$ has length smaller than or equal to $4\beta_2N<N$. In particular, we can use that the induction is valid up to time $N$ and we can repeat the argument for 
$
\left|z'_{2N+p+\ell}(\omega')\right|.
$
At the expiration time of the new bound period $p_1$, $
\left|z'_{2N+p+\ell+p_1}(\omega')\right| $ satisfies 
$$
\left|z'_{2N+p+\ell+p_1}(\omega')\right|\geq  \text{const}\,e^{r'(1-3\beta)}\left|z'_{2N+p+\ell}(\omega')\right|,
$$ see \eqref{eq:largedeviation}.
After a finite number of steps $s$, at time $n_s$ and for a parameters interval $\omega^{(s)}$, we have 
$$
\left|z_{n_s}\left(\omega^{(s)}\right)\right|\geq \frac{1}{10}.
$$
We are then in an escape situation and the argument in Section \ref{sink} applies.
\begin{figure}[h]
\centering
\includegraphics[width=0.9\textwidth]{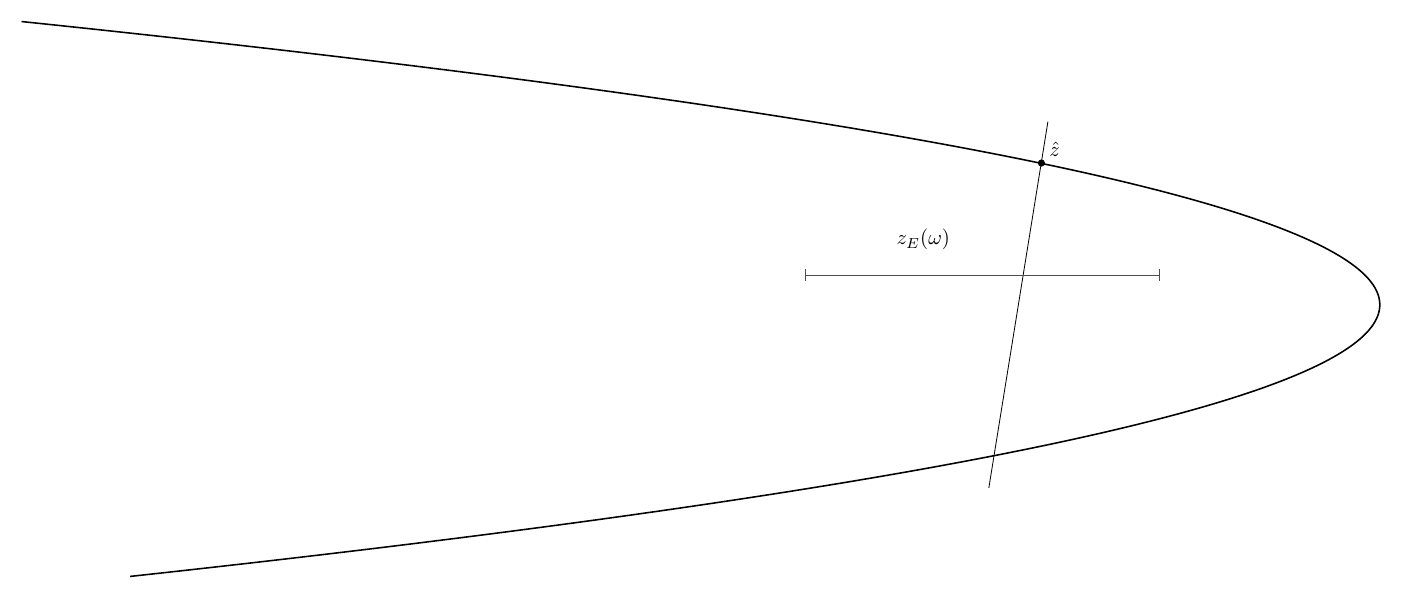}
\caption{Long escape situation for the second critical point}
\label{Fig4New}
\end{figure}
\section{Construction of a tangency}\label{sec:tangency} 
We aim to construct a non-degenerate quadratic tangency at the long
escape time $\tilde N$. We consider a parameter interval
$\tilde{\omega}$. For each $a$ in $\tilde{\omega}$ there is a critical
point $\tilde{z}_0$ and a fixed point $\hat{z}_a$
For each fixed $a\in\tilde{\omega}$ a segment $\gamma_{u}^a\subset
W^u(\hat{z}_a)$ which contains $\tilde{z}_a$ and $\hat{z}_a$.
 We aim to prove that $F_{a}^{\tilde N} (\gamma_{u}^a)$ has very high curvature near
$F_{a}^{\tilde N}(\tilde z_0(a))$. It is advantageous to study the
curvature of $F_a^{\tilde N-1}(\tilde\gamma_{u}^a)$ where
$\tilde\gamma_{u}^a$ is the curve  $F_a(\gamma_{u}^a)$ which is located close to the critical value.

We decompose the tangent vector $\tau(u)$ along $\tilde\gamma_u^a$ as 
$$\tau(u)=A(u)E_{\tilde N-1}(u)+B(u)W_{\tilde N-1}(u)$$ where 
$$\left\{\begin{matrix}
E_{\tilde N-1}(u)=e_{\tilde N-1}(\zeta_1(u))\\
W_{\tilde N-1}(u)=DF_a^{\tilde
	N-1}(\zeta_1(u))\left(\begin{matrix}
1\\0
\end{matrix}\right)
\end{matrix}\right.$$
and $\zeta_1(u)$ is the parametrization of $\tilde\gamma_{u}^a$ by arclength.

We have 
\begin{equation}\label{main}
\zeta_{\tilde N}(\rho)-\tilde z_{\tilde N}=\int_0^{\rho}\left(A(u)E_{\tilde N-1}(u)+B(u)W_{\tilde N-1}(u)\right)du.
\end{equation}
Here $\zeta_1=\zeta_1(\rho)$ is an arbitrary point on $\tilde\gamma_a^u$ at arclength $\rho$ from $\tilde {z}_1(a)$ and $\zeta_{\tilde N}(\rho)=F_a^{\tilde N-1}\left(\zeta_1(\rho)\right)$.

Differentiating \eqref{main} twice, we get 
\begin{equation}\label{main1}
\zeta'_{\tilde N}(\rho)=A(\rho)E_{\tilde N-1}(\rho)+B(\rho)W_{\tilde N-1}(\rho)
\end{equation}
and 
\begin{equation}\label{main2}
\zeta''_{\tilde N}(\rho)=A'(\rho)E_{\tilde N-1}(\rho)+A(\rho)E'_{\tilde N-1}(\rho)+B'(\rho)W_{\tilde N-1}(\rho)+B(\rho)W'_{\tilde N-1}(\rho)
\end{equation}
\begin{lem}\label{wprime} For all $\rho>0$
$$\left|W'_{\tilde N-1}(\rho)\right|\leq 25^{\tilde N-1}.$$ 
\end{lem}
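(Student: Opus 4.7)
The plan is to expand $W_{\tilde N}(\rho)$ as a telescoping product of Jacobians along the forward orbit and then apply the chain rule term by term. Set $\zeta_0(\rho) = z(\rho)$ and $\zeta_j(\rho) = F_a^j(z(\rho))$ for $1 \leq j \leq \tilde N$, so that
$$
W_{\tilde N}(\rho) \;=\; DF_a(\zeta_{\tilde N-1})\,DF_a(\zeta_{\tilde N-2})\,\cdots\,DF_a(\zeta_1)\begin{brsm}1\\0\end{brsm}.
$$
Differentiating in $\rho$, a standard product-rule computation gives
$$
W'_{\tilde N}(\rho) \;=\; \sum_{k=1}^{\tilde N-1}\Bigl(\prod_{j=k+1}^{\tilde N-1}DF_a(\zeta_j)\Bigr)\,\bigl(D^2F_a(\zeta_k)[\zeta'_k(\rho),\,\cdot\,]\bigr)\,\Bigl(\prod_{j=1}^{k-1}DF_a(\zeta_j)\Bigr)\begin{brsm}1\\0\end{brsm},
$$
with the usual convention that empty products are the identity.

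Next I would bound each factor using the uniform estimate $\|DF_a\|_{\Cuno}\leq 5$ that was already invoked at the end of Section 3; this simultaneously controls $\|DF_a\|\leq 5$ and $\|D^2F_a\|\leq 5$. Because the curve $\tilde\gamma_a$ is parameterized by arclength, $|\zeta'_0(\rho)|=1$, and the chain rule yields
$$
|\zeta'_k(\rho)| \;\leq\; \|DF_a^{\,k}(\zeta_0(\rho))\|\cdot|\zeta'_0(\rho)| \;\leq\; 5^k.
$$
Substituting into the $k$-th term gives a bound of the form $5^{\tilde N-1-k}\cdot 5\cdot 5^k\cdot 5^{k-1}\cdot 1 = 5^{\tilde N-1+k}$.

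Summing the geometric series over $k=1,\dots,\tilde N-1$ then produces
$$
|W'_{\tilde N}(\rho)| \;\leq\; \sum_{k=1}^{\tilde N-1} 5^{\tilde N-1+k} \;\leq\; \tfrac{1}{4}\,5^{\,2\tilde N-1} \;=\; \tfrac{1}{20}\,25^{\tilde N} \;\leq\; 25^{\tilde N},
$$
which is the claimed bound. I do not expect any genuine obstacle here: the only thing one has to be slightly careful about is the uniform $C^2$ control on $F_a$ (so that both $\|DF_a\|$ and $\|D^2F_a\|$ are dominated by the constant $5$ used in the statement), which is provided either by the H\'enon-like assumptions (a)--(c) of Definition \ref{henonlike} with a small adjustment of the constant, or directly by the explicit form of the map in the original H\'enon case. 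Once that is noted, the estimate is essentially an exercise in the chain rule and the bound $|\zeta'_k|\leq 5^k$ coming from arclength parameterization.
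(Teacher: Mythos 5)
Your proposal is correct and follows essentially the same route as the paper: differentiate the matrix product by the product rule, bound the derivative of each factor via the chain rule estimate $|\zeta_k'(\rho)|\leq \|DF_a\|^k$, and sum the resulting geometric series, with the uniform $C^2$ control on $F_a$ supplying the constants. The paper merely uses the slightly sharper constant $\tfrac92$ for $\|DF\|$ and cites the $O(b^{t/2})$ bound on the $C^2$ norms of the perturbation terms, but the argument is the same.
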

\begin{proof}
Observe that 
$$
W_{\tilde N-1}(\rho)=DF(x_{\tilde N-1},y_{\tilde N-1})\dots DF(x_{1},y_{1})\begin{brsm}1\\0\end{brsm}.
$$
By differentiating with respect to $\rho$ and taking the matrix norm, one gets,
$$
\left|W'_{\tilde N-1}(\rho)\right|=\sum_{i}\left(\prod_{j\neq i}\left\|DF(x_{j},y_{j})\right\|\right)\left\|P_i\right\|
$$
where $$P_i=\frac{d}{d\rho}\left[\begin{matrix}
-2ax_i+\partial_x\psi_1 & \partial_y\psi_1\\
\partial_x\psi_2& \partial_y\psi_2
\end{matrix}\right].$$ 
Since the $C^2$ norms of $\psi_1$
and  $\psi_2$ have the bound   $Cb^{t/2}$,  see \cite{MoraViana},
Section 7A, we get 
\begin{eqnarray*}
\left|W'_{\tilde N-1}(\rho)\right|\leq \sum_{i}\left[\left(\frac{9}{2}\right)^{\tilde N-2}\cdot 3\left(\frac{9}{2}\right)^{i}\right]\leq 25^{\tilde N}
\end{eqnarray*}
where we used that $\left\|W_i\right\|<\left(\frac{9}{2}\right)^{i}$ (since $\left\|DF\right\|<\frac{9}{2}$).
%
%
\end{proof}
\begin{prop}\label{bigcurvature}
Let $ |\rho_0|=\frac{\left|E_{\tilde N}(0)\right|}{\left\|W_{\tilde
      N}(0)\right\|} $, then for a suitably chosen $\rho'_0$, and  for all $\rho'_0\leq|\rho|\leq\rho_0$,  the curvature of $\zeta_{\tilde N}(\rho)$, $\kappa\left(\zeta_{\tilde N}(\rho)\right)$ satisfies the following:
$$
{2C_1}\frac{|W_{\tilde N}(\rho)|}{|E_{\tilde N}(\rho)|^2}
\geq\kappa\left(\zeta_{\tilde N}(\rho)\right)\geq \frac{C_1}{2}\frac{|W_{\tilde N}(\rho)|}{|E_{\tilde N}(\rho)|^2}$$
with $2\leq C_1\leq 4$.
\end{prop}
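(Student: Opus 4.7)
The approach is to bound the planar cross product $\zeta'_{\tilde N}\times\zeta''_{\tilde N}$ from below and $|\zeta'_{\tilde N}|$ from above on $|\rho|\leq\rho_0$, then combine via $\kappa=|\zeta'_{\tilde N}\times\zeta''_{\tilde N}|/|\zeta'_{\tilde N}|^3$. The structural input is the behaviour of $A,B$ at $\rho=0$: by the very definition of the critical point, $\tau(0)$ is parallel to the most contracting direction $E_{\tilde N}(0)$, so $B(0)=0$ and $A(0)E_{\tilde N}(0)=\tau(0)$. The quadratic-fold structure of $W^u$ at $\tilde z_0$ (the same mechanism used in Lemma~\ref{lem:quadratic}, Lemma~9.6 of \cite{MoraViana}, and Assertion~4(c) of \cite{BC2}) then gives $\tfrac{3a}{2}|\rho|\leq|B(\rho)|\leq\tfrac{5a}{2}|\rho|$ with $B'(\rho)$ essentially constant of order $a$, and $A,A'$ remain bounded.

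Expanding bilinearly via \eqref{main1}--\eqref{main2} and writing $[\cdot,\cdot]$ for the planar determinant, we get, after cancelling $[E,E]=[W,W]=0$,
$$
\zeta'_{\tilde N}\times\zeta''_{\tilde N}
=A^2[E,E']+(AB'-A'B)[E,W]+AB\bigl([E,W']+[W,E']\bigr)+B^2[W,W'].
$$
I single out $AB'[E_{\tilde N},W_{\tilde N}]$ as the dominant contribution: $|AB'|$ is of order $a$, and the transversality of $E_{\tilde N}$ (contracting direction) and $W_{\tilde N}$ (iterate of the horizontal vector) forces $|[E_{\tilde N},W_{\tilde N}]|$ to be comparable to $|E_{\tilde N}||W_{\tilde N}|$, giving size $\sim a|E||W|$. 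The remaining contributions I would control on $[-\rho_0,\rho_0]$ as follows: the $A'B[E,W]$ term carries the extra factor $|B|\leq a\rho_0=a|E|/|W|$; the $E'$-terms $A^2[E,E']$ and $AB[W,E']$ use the slow variation of the contracting direction given by Lemma~\ref{contractive-field}; the $W'$-terms $AB[E,W']$ and $B^2[W,W']$ are bounded using $|W'_{\tilde N}|\leq 25^{\tilde N}$ from Lemma~\ref{wprime}, compensated by the smallness of $|B|\leq a|E|/|W|$ (respectively $|B|^2$) together with the exponential smallness of $|E_{\tilde N}|$ as an iterate of the contracting direction.

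For the denominator, the very choice $\rho_0=|E|/|W|$ balances the two contributions of $\zeta'_{\tilde N}=AE+BW$ at $\rho=\rho_0$, so for $|\rho|\leq\rho_0$ the $E$-part dominates and $|\zeta'_{\tilde N}|\leq C|E_{\tilde N}|$ for a numerical constant $C$. Combining gives
$$
\kappa=\frac{|\zeta'_{\tilde N}\times\zeta''_{\tilde N}|}{|\zeta'_{\tilde N}|^3}\geq\frac{ca\,|E_{\tilde N}||W_{\tilde N}|}{C^3|E_{\tilde N}|^3}=\frac{ca}{C^3}\cdot\frac{|W_{\tilde N}|}{|E_{\tilde N}|^2},
$$
and tuning the constants yields the asserted $\tfrac{C_1}{2}$ with $2\leq C_1\leq 4$.

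The main obstacle is making the $W'_{\tilde N}$-terms truly negligible, since the only available bound $|W'_{\tilde N}|\leq 25^{\tilde N}$ (Lemma~\ref{wprime}) is crudely much larger than $|W_{\tilde N}|$ itself, and naive cancellation fails. What rescues the estimate is the simultaneous use of three smallness features: the vanishing of $B$ at $\rho=0$ coming from the critical-point alignment, the restriction $|\rho|\leq\rho_0=|E|/|W|$, and the exponential smallness of $|E_{\tilde N}|$ from Lemma~\ref{contractive-field}(b). A secondary subtlety is making quantitative the transversality of $E_{\tilde N}$ and $W_{\tilde N}$ at $\tilde z_{\tilde N}$, so that $|[E,W]|$ is a definite fraction of $|E||W|$ rather than much smaller; this is the one ingredient that genuinely uses that we are at an escape time $\tilde N$ where the expanding and contracting frames are well-separated.
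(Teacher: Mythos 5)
Your proposal follows essentially the same route as the paper's proof: the identical bilinear expansion of $\zeta'_{\tilde N}\times\zeta''_{\tilde N}$, the same dominant term $(AB'-A'B)\,E_{\tilde N}\times W_{\tilde N}$ estimated through the near-orthogonality of $E_{\tilde N}$ and $W_{\tilde N}$, and the same control of the remaining terms via Lemma \ref{contractive-field} for $E'_{\tilde N}$ and Lemma \ref{wprime} for $W'_{\tilde N}$, compensated by $|\rho|\leq\rho_0=|E_{\tilde N}|/|W_{\tilde N}|$ together with the exponential smallness of $|E_{\tilde N}|$. The three ``smallness features'' you single out are exactly the ones the paper uses, so there is nothing substantive to add.
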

\begin{rem}
Observe that the numbers $\frac{1}{2}$ and 2 appearing in the
curvature estimates above can be chosen arbitrarily close to 1, if $b$ is sufficiently small. 
\end{rem}
\begin{proof}
Recall that 
$$\kappa(\rho)=\frac{\left|\zeta'_{\tilde N}(\rho)\times\zeta''_{\tilde N}(\rho) \right|}{\left|\zeta'_{\tilde N}(\rho)\right|^3}.$$ We start by computing $\zeta'_{\tilde N}(\rho)\times\zeta''_{\tilde N}(\rho)$. We get 
\begin{eqnarray*}
\zeta'_{\tilde N}(\rho)\times\zeta''_{\tilde N}(\rho)&=&A(\rho)A'(\rho)E_{\tilde N-1}(\rho)\times E_{\tilde N-1}(\rho)+A(\rho)^2E_{\tilde N-1}(\rho)\times E'_{\tilde N-1}(\rho)\\
&+&A(\rho)B'(\rho)E_{\tilde N-1}(\rho)\times W_{\tilde N-1}(\rho)+A(\rho)B(\rho)E_{\tilde N-1}(\rho)\times W'_{\tilde N-1}(\rho)\\
&+&A'(\rho)B(\rho)W_{\tilde N-1}(\rho)\times E_{\tilde N-1}(\rho)+A(\rho)B(\rho)W_{\tilde N-1}(\rho)\times E'_{\tilde N-1}(\rho)\\
&+&B(\rho)B'(\rho)W_{\tilde N-1}(\rho)\times W_{\tilde N-1}(\rho)+B(\rho)^2W_{\tilde N-1}(\rho)\times W'_{\tilde N-1}(\rho)
\end{eqnarray*}
and since $E_{\tilde N-1}(\rho)\times E_{\tilde N-1}(\rho)=W_{\tilde N-1}(\rho)\times W_{\tilde N-1}(\rho)=0$
\begin{eqnarray*}
\zeta'_{\tilde N}(\rho)\times\zeta''_{\tilde N}(\rho)&=&
\left(A(\rho)B'(\rho)-A'(\rho)B(\rho)\right)E_{\tilde N-1}(\rho)\times W_{\tilde N-1}(\rho)\\
&+& A(\rho)^2E_{\tilde N-1}(\rho)\times E'_{\tilde N-1}(\rho)+B(\rho)^2W_{\tilde N-1}(\rho)\times W'_{\tilde N-1}(\rho)\\
&+&A(\rho)B(\rho)E_{\tilde N-1}(\rho)\times W'_{\tilde N-1}(\rho)+A(\rho)B(\rho)W_{\tilde N}(\rho)\times E'_{\tilde N-1}(\rho).
\end{eqnarray*}
In \cite{BC2}, Section 7.5 there are estimates of $A$, $A'$, $B$ and $B'$ in the classical H\'enon case. We have new similar estimates in the H\'enon-like case as follows:
\paragraph{Claim.} There are constants $\gamma_1$ and $C_1=2a/\gamma_0^2$
such that 
\begin{eqnarray}\label{A-B-est}
\frac{2}{\gamma_0^2}\rho\leq B(\rho)&\leq \frac{4}{\gamma_0^2}\rho,\\
B'(\rho)&=&\frac{2a}{\gamma_0^2}\frac{dy}{d\rho}+O(b)=C_1+O(b),\\
A(\rho)&=&1+O(\rho^2),\\
A'(\rho)&=&O(\rho).
\end{eqnarray}

We prove now the previous Claim. Observe that for $(x,y)$ close to the critical
value $\tilde{z}_1$  
$$
Df_a=\left(\begin{matrix}
-2ax+\psi_{1x}&\psi_{1y}\\
\psi_{2x}&\psi_{2y}
\end{matrix}\right)=\left(\begin{matrix}
-2ax+\alpha_1&\beta_1\\
\gamma_1&\delta_1
\end{matrix}\right)
$$
and 
$$
Df_a^{-1}=\frac{1}{\text{det}Df_a}\left(\begin{matrix}
\delta_1&-\beta_1\\
-\gamma_1&-2ax+\alpha_1
\end{matrix}\right).
$$
It follows that 
$$
(Df_a^{-1})^2\left(\begin{matrix}.
0\\
1
\end{matrix}\right)=\frac{1}{(\text{det}Df_a)^2}\left(\begin{matrix}
-\delta\beta_1-\beta_1(-2ax+\alpha_1)\\
\beta_1\gamma_1+(-2ax+\alpha_1)^2
\end{matrix}\right)
$$
 
This means that the most contractive direction for $(x,y)$ close to $\tilde{z}_1(a)$ has slope
$$
s\approx\frac{2ax-\alpha_1}{\beta_1}.
$$
By the construction of the local stable manifold, see \cite{BC2} pp. 110-111, it follows that there is a temporary stable foliation with slope of $\approx{2ax_1}/{\beta}$ with $x_1\approx 1$.

We claim that the image of the leg of the unstable manifold near the critical value is an approximate parabola. The unstable direction at $\hat{z}$ is given by the unstable direction of the fixed point located approximately at $\left(\begin{matrix}
1/2\\0
\end{matrix}\right)$.
The slope of $W^u$ near $\tilde z_0$ is given by 
$$
\left(\begin{matrix}
-2ax_{-1}+\alpha_{-1}&\beta_{-1}\\
\gamma_{-1}&\delta_{-1}
\end{matrix}\right)\left(\begin{matrix}
-2ax_{-2}+\alpha_{-2}&\beta_{-2}\\
\gamma_{-2}&\delta_{-2}
\end{matrix}\right)\dots\left(\begin{matrix}
-2ax_{-k}+\alpha_{-k}&\beta_{-k}\\
\gamma_{-k}&\delta_{-k}
\end{matrix}\right)v_0
$$
where $v_0$ is the slope of $W^u(\hat z)$ which is essentially horizontal. Observe that $x_{-1}$ is approximately given by $1-ax_{-1}^2=0$. The slope of $W^u$ near $\tilde z_0$ is approximately given by ${\gamma_{-1}}/{-2ax_{-1}}$.

By approximating the unstable manifold by a straight line
$$
\begin{cases}
x=x_0+t\\
y=y_0+kt
\end{cases}
$$
where $k={\gamma_{-1}}/{-2ax_{-1}}$ we have that the image of $W^u$ is
the curve $(x_1(t),y_1(t)$ with the derivative
$$
\begin{cases}
x_1'(t)=-2a(t+x_0) +\alpha_0+k\beta_0\\
y_1'(t)=\gamma_0+k\delta_0
\end{cases}
$$
so
The curvature is then given by
$\kappa(t)=|\gamma_1'(t)\times\gamma_2''(t)|/|\gamma_1'(t)|^3\approx 2a/\gamma_0^2$.

This means that, in a suitable almost orthogonal coordinate system  $(\eta_1,\xi_1)$, one can use a version of Hadamard's lemma, see Lemma 8.7 in \cite{BC2} to get that the image parabola looks approximately as 
$$
\xi_1=1-a\left(\frac{\eta_1}{\gamma_0}\right)^2.
$$

For convenience of the reader we recall here {\it Lemma 8.7,\cite{BC2}} which we just used. Let $f\in C^2(A,A+\ell)$ and suppose that
$$
|f(a)|\leq M_0,\qquad |f''(a)|\leq M_2.
$$
Then if
$$
4M_0 < \ell^2
$$
it hold that
$$
|f'(a)|\leq \sqrt{M_0}(1+M_2).
$$
This completes the proof of the Claim.
\bigskip

The following estimates hold.
\begin{eqnarray*}
\left|\left(A(\rho)B'(\rho)-A'(\rho)B(\rho)\right)E_{\tilde N-1}(\rho)\times W_{\tilde N-1}(\rho)\right|&\geq&\frac{3}{4}C_1\left|E_{\tilde N-1}(\rho)\times W_{\tilde N-1}(\rho)\right|\\
&\geq&\frac{C_1}{2}\left|E_{\tilde N-1}(\rho)\right|\left| W_{\tilde N-1}(\rho)\right|,
\end{eqnarray*}
where we used the fact that the angle between $W_{\tilde N-1}$ and $E_{\tilde N-1}$ is very close to $\pi/2$l, see formula $(9)$, Section $6$ in \cite{MoraViana}. 
By Lemma $6.8$ in \cite{MoraViana}, we get
\begin{eqnarray*}
\left|E_{\tilde N-1}(\rho)\times E'_{\tilde N-1}(\rho)\right|&\leq&\left|E_{\tilde N-1}(\rho)\right| \left| E'_{\tilde N-1}(\rho)\right| \\
&\leq&\left|E_{\tilde N-1}(\rho)\right| \left(K_1 b\right)^{\tilde N-4}
\end{eqnarray*}
with $K_1>0$.
By Lemma \ref{wprime} we have
\begin{eqnarray*}
\left|B(\rho)^2 W_{\tilde N-1}(\rho)\times W'_{\tilde N-1}(\rho)\right|&\leq&\left|W_{\tilde N-1}(\rho)\right|\frac{4^2}{\gamma_0^2}\rho^2 25^{\tilde N-1}
\\
&\leq&\left|W_{\tilde N-1}(\rho)\right|\left|E_{\tilde
       N-1}(\rho)\right|\cdot\frac{16}{\gamma_0^2}\left(\frac{\left|E_{\tilde
       N-1}(\rho_0)\right|^2}{\left|W_{\tilde N-1}(\rho_0)\right|^2 |E_{\tilde{N}-1}(\rho)|}25^{\tilde N-1}\right)\\
&\leq&\frac{1}{100}\left|W_{\tilde N-1}(\rho)\right|\left|E_{\tilde N-1}(\rho)\right|,
\end{eqnarray*}.
We have used the distorsion estimate for $W$-vectors, see \cite{MoraViana},
Lemma 10.2. to conclude that $W_{\tilde{N}-1}(\rho)$ and
$W_{\tilde{N}-1}(\rho_0)$ are comparable, the estimate that
$|W_{\tilde{N}-1}(\rho)|\cdot |E_{\tilde{N}-1}(\rho)|\approx
b^{t(\tilde{N}-1)} $ and the estimate $\left|E_{\tilde
    N-1}(\rho)\right|<\left({Kb}/{\kappa}\right)^{\tilde N-1}$

By Lemma $6.8$ in \cite{MoraViana}, 
\begin{eqnarray*}
\left|A(\rho)B(\rho) W_{\tilde N-1}(\rho)\times E'_{\tilde N-1}(\rho)\right|&\leq& \frac{8}{\gamma_0^2} |\rho| \left| W_{\tilde N-1}(\rho)\right|\left(K_1 b\right)^{\tilde N-4}
\\
&\leq& \frac{8}{\gamma_0^2}\frac{\left| E_{\tilde N-1}(\rho_0)\right|}{\left| W_{\tilde N-1}(\rho_0)\right|}\left| W_{\tilde N-1}(\rho_0)\right|\left(K_1 b\right)^{\tilde N-2}\\
&\leq&\frac{1}{100} \left| E_{\tilde N-1}(\rho_0)\right|\left| W_{\tilde N-1}(\rho_0)\right|.
\end{eqnarray*}
By Lemma \ref{wprime} we have
\begin{eqnarray*}
\left|A(\rho) B(\rho) E_{\tilde N-1}(\rho)\times W'_{\tilde N-1}(\rho)\right|&\leq&\frac{8}{\gamma_0^2} |\rho| \left|E_{\tilde N-1}(\rho)\right| 25^{\tilde N-1}
\\
&\leq&\frac{8}{\gamma_0^2}\left|W_{\tilde N-1}(\rho_0)\right|\left|E_{\tilde N-1}(\rho_0)\right|\frac{\left|E_{\tilde N-1}(\rho_0)\right|}{\left|W_{\tilde N-1}(\rho_0)\right|^2}  25^{\tilde N-1}\\
&\leq&\frac{1}{100}\left|W_{\tilde N-1}(\rho_0)\right|\left|E_{\tilde N-1}(\rho_0)\right|,
\end{eqnarray*}
where we used that $|\rho|^2\leq |\rho_0|^2={\left|E_{\tilde
      N}(\rho_0)\right|^2}/{\left|W_{\tilde N}(\rho_0)\right|^2} $
and $\left|E_{\tilde N-1}(\rho)\right|<\left({Kb}/{\kappa}\right)^{\tilde N-1}$, $K,\kappa>0$, see formula $(5)$ of Section $6$ in \cite{MoraViana}.

The term that dominates is $A(\rho)B'(\rho)E_{\tilde{N-1}}(\rho)\times
W_{\tilde{N-1}}(\rho)$. The proof of the lemma is concluded by combining the previous five estimates. 
\end{proof}
\subsection{Quadratic Tangency}
We prove that in a long escape situation a quadratic tangency appears.
\begin{prop}\label{prop:tangency}
Let $z_E(\omega)$ be a curve segment  of critical values in an escape
situation that intersect {$\gamma^s$}, the leg of $W^s(\hat{z})$
pointing downwards.
Then there exists a unique $a_0\in\omega$ such that the tangency between {$\gamma^s_{a_0}$} and {$\gamma^u_{a_0}$} is quadratic. 
\end{prop}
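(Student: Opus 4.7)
The plan is to reduce the tangency condition to a scalar equation in $a$ whose monotonicity forces a unique zero. Near the critical value $z_E(a)$, introduce local coordinates $(X,Y)$ in which $\gamma^s_a$ is nearly vertical (Lemma \ref{stablemanifold}) and hence can be written as a graph $X=G_a(Y)$ with $|G_a'|, |G_a''|\leq C\sqrt{b}$, while by Lemma \ref{unstable} and Proposition \ref{bigcurvature} the curve $\gamma^u_a$ is graphical near its vertex, $X=F_a(Y)$, with $F_a''(Y)\geq c\,\kappa(a)$ and $\kappa(a)\geq c|W_{\tilde N}|/|E_{\tilde N}|^2$ enormous. Set $h_a(Y):=F_a(Y)-G_a(Y)$; since $h_a''(Y)\geq \tfrac{1}{2}\kappa(a)>0$, $h_a$ is strongly convex and admits a unique minimizer $Y^*(a)$ depending smoothly on $a$. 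A quadratic tangency of $\gamma^u_a$ and $\gamma^s_a$ is exactly the condition
\[
\Phi(a):=h_a(Y^*(a))=0,
\]
for then $h_{a_0}(Y^*)=h_{a_0}'(Y^*)=0$ while $h_{a_0}''(Y^*)\geq \tfrac12\kappa(a_0)\neq 0$, which is the non-degeneracy of the contact.

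Using $h_a'(Y^*(a))=0$ the chain rule yields $\Phi'(a)=\dfrac{\partial F_a}{\partial a}(Y^*(a))-\dfrac{\partial G_a}{\partial a}(Y^*(a))$. The first term is the horizontal shift rate of the vertex of $\gamma^u_a$, which by Lemma \ref{par-phase-dist1} and Corollary \ref{cor:pardist} is comparable to $\pi_1\dot z_E(a)$. Since $z_E(\omega)$ is a $C^2(b)$-curve with $\pi_1 z_E(\omega)\supset[3/8,5/8]$, the tangent is nearly horizontal and $\pi_1\dot z_E$ has constant sign on $\omega$; after applying the distortion estimate of Corollary \ref{cor:pardist} its pointwise size is bounded below by $c|\omega|^{-1}$. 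On the other hand $\partial G_a/\partial a$ is uniformly bounded by the $C^1$-dependence of $\gamma^s_a$ on $a$ in Lemma \ref{stablemanifold}. Hence, for $|\omega|$ small as guaranteed by the escape structure, $\Phi'(a)\neq 0$ with constant sign on $\omega$, so $\Phi$ is strictly monotone.

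For existence, at the $a_*\in\omega$ provided by the hypothesis $z_E(\omega)\cap\gamma^s\neq\emptyset$, the vertex of $\gamma^u_{a_*}$ lies on $\gamma^s_{a_*}$ and a short local computation using $F_{a_*}(Y^{\mathrm{tip}})=G_{a_*}(Y^{\mathrm{tip}})$ and $F_{a_*}'(Y^{\mathrm{tip}})=0$ gives $\Phi(a_*)\approx -(G_{a_*}'(Y^{\mathrm{tip}}))^2/(2F_{a_*}''(Y^{\mathrm{tip}}))<0$. At one endpoint of $\omega$ the vertex is displaced horizontally by $\Theta(1)$ from $\gamma^s$ by the escape condition, so $\gamma^u_a\cap\gamma^s_a=\emptyset$ there and $\Phi>0$. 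Strict monotonicity together with the intermediate value theorem produces a unique $a_0\in\omega$ with $\Phi(a_0)=0$, and the tangency at $a_0$ is quadratic by the lower bound on $h_{a_0}''$.

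The main obstacle is the transversality estimate $\Phi'\neq 0$: it requires matching the parameter-space motion of the vertex against its phase-space tangent direction via the splitting algorithm of Subsection \ref{ss:splitting}, the bound-period distortion for the $w_\nu^*$ vectors, and Lemma \ref{par-phase-dist1} together with Corollary \ref{cor:pardist}. The key technical point is to show that the dominant horizontal component of $\dot z_E(a)$ inherited from the escape is not washed out by the contributions of the folding periods or by the contractive component $E_{\tilde N}$ that is picked up at the fold described in Proposition \ref{bigcurvature}.
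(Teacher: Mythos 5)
Your argument is correct, and it is in fact more complete than the proof given in the paper. The paper's own proof addresses only the non-degeneracy of the contact: it observes that a tangency with the steep leaf $\gamma^s$ (slope $\sim -C/\sqrt{b}$ by Lemma \ref{stablemanifold}) can only occur at arclength $\rho\approx -\frac{|E_{\tilde N}|}{2C|W_{\tilde N}|}\sqrt{b}$ from the tip, which lies inside $(-\rho_0,\rho_0)$, so the curvature lower bound of Proposition \ref{bigcurvature} applies there while the curvature of $\gamma^s$ is essentially zero (Remark \ref{rem:curv}); existence and uniqueness of $a_0$ are left implicit. Your scalar reduction $\Phi(a)=\min_Y\bigl(F_a(Y)-G_a(Y)\bigr)$ packages the same curvature input --- your bound $h_a''\geq\tfrac12\kappa(a)$ is Proposition \ref{bigcurvature} read off near the vertex, and your minimizer $Y^*(a)$ sits at the same $O\bigl(\sqrt{b}\,|E_{\tilde N}|/|W_{\tilde N}|\bigr)$ distance from the tip as the paper's $\rho$ --- but it additionally delivers existence and uniqueness via the envelope identity $\Phi'(a)=\partial_aF_a(Y^*)-\partial_aG_a(Y^*)$ together with the domination of $\pi_1\dot z_E\sim|\omega|^{-1}$ over the $O(1)$ motion of the stable leaf; this is the sweeping/monotonicity mechanism the paper uses elsewhere (Lemma \ref{homoclinic}, the proof of Lemma \ref{returnlemma}) but does not spell out in this proof. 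Two points deserve a sentence each in a final write-up: first, the orientation in the existence step --- the fold opens in a definite horizontal direction, so $\Phi>0$ only at the endpoint of $\omega$ for which the vertex is displaced to the concave side of $\gamma^s$, which the normalization of Lemma \ref{lem:long_escape_sit} and Lemma \ref{homoclinic} makes automatic; second, the identification of $\partial_aF_a(Y^*)$ with $\pi_1\dot z_E(a)$ up to controlled distortion, which you correctly flag and which is exactly what Lemma \ref{par-phase-dist1} and Corollary \ref{cor:pardist} provide since $Y^*$ lies within the window $\rho_0$ of the vertex.
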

\begin{rem}\label{rem:curv}
Actually, the curvature of {$\gamma^s_{a_0}$} is close to zero while the curvature of $\gamma^u_{a_0}$ is close to its maximal which is $2\frac{|W_N|}{|E_N|^2}$ within a factor close to $1$.
\end{rem}
\begin{proof}
By Proposition \ref{bigcurvature}, the $\rho$ which makes the slope
equal to $-{C}/{\sqrt b}$ is
roughly $$\rho=-\frac{|E_N|}{2C|W_N|}\sqrt b.$$ Observe that this
$\rho$ satisfies the estimate $|\rho|\geq\rho_0$, so we avoid the
exact tip of the parabola like image of the unstable manifold. We use
the bounds in Proposition \ref{bigcurvature}for the curvature and the
angle between $E_{\tilde{N}}(0)$ and $W_{\tilde{N}}(0)$ is
$\frac{\pi}{2}$. Using that $||W_{\tilde{N}}||\leq 25^{\tilde{N}}$
(Lemma \ref{wprime}) and $||DE_{\tilde{N}}||\leq C\sqrt{b}$
(\cite{MoraViana}, Lemma 6.6), the statement follows.  

\end{proof}
\begin{figure}[h]
\centering
\includegraphics[width=0.9\textwidth]{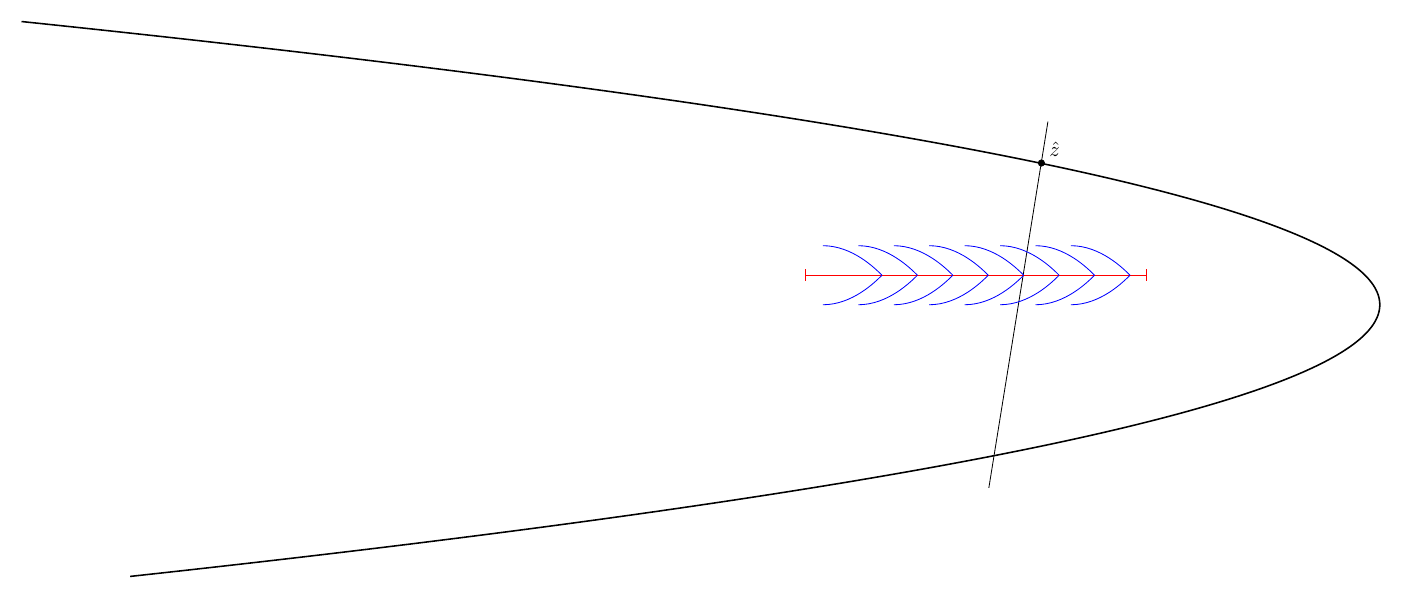}
\caption{Quadratic tangency}
\label{Fig5New}
\end{figure}
\section{Proof of theorems \ref{A}, \ref{B} and \ref{C}}
The proof of theorems \ref{A}, \ref{B} and \ref{C} is done by
induction. From sections \ref{sink} and \ref{sec:tangency} we selected
maps with a sink and a new tangency. We reapply now Section \ref{sink}
to get a second sink and Section \ref{sec:tangency} to get a new
tangency. One could stop this process after $k$ steps. At this moment
one would have $k$ sinks and a new tangency. This tangency will then
be used to create a strange attractor using \cite{MoraViana} and give
the proof of Theorem \ref{A}.
Alternatively, one could continue the process infinitely many times to
get infinitely many sinks. This leads to the proof of Theorem \ref{B}.
The inductive procedure is formulated in the next proposition.
\begin{prop}\label{mainprop}
There exists $K>0$ such that, for all $k=0,1,\dots , K$, there are
parameters intervals $\omega_k$ with $\omega_k\subset\omega_{k-1}$, so
that, for all $a\in\omega_k$, there is a $\mathcal C^2(b)$ curve
$\gamma_k(a)\subset W^u(\hat z)$ with
$z_k(a)\in\gamma_k(a)$. Moreover, for all $k=0,1,\dots, K$ there are
regions $\mathcal D_{N_k}(a)$ with $\mathcal D_{N_j}(a)\cap \mathcal
D_{N_i}(a)=\emptyset$ for all $i\neq j$ such that $\mathcal
D_{N_k}(a)$ is bounded by $\gamma_k(a)$ and parabolic leaves of
$W^s_{\text{loc}}$ and it contains a unique sink.
\end{prop}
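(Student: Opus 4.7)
The plan is to prove Proposition \ref{mainprop} by induction on $k$, chaining together the constructions of Section \ref{sink} and Section \ref{sec:capturing} in an alternating fashion. For the base case $k=0$, I take $\omega_0 := \tilde\omega_0''$ from Lemma \ref{returnlemma}, the curve $\gamma_0(a)$ to be the left leg of $W^u(\hat z)$ containing the initial critical point $z_0(a)$, $N_0 := N$, and $\mathcal{D}_{N_0}(a)$ the invariant quadrilateral constructed in Section \ref{sink}. Lemma \ref{contraction} guarantees that this region contains a unique attracting periodic orbit.

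For the inductive step, assume the statement has been established up to index $k-1$ with nested intervals $\omega_0 \supset \cdots \supset \omega_{k-1}$ and disjoint trapping regions $\mathcal{D}_{N_0}(a), \dots, \mathcal{D}_{N_{k-1}}(a)$. Using Lemma \ref{lem:newcapture}, I select a new leaf of $W^u(\hat z)$ accumulating on $\gamma_0$ at a vertical distance $d_n = D_{N_{k-1}}^{-\eta}$ (with $1 < \eta < 2$ chosen as in Section \ref{sec:capturing}); this leaf contains a new critical point $z_0^{(k)}$. I then track this critical point through the two bound periods analyzed in Section \ref{sec:capturing}, first bound to $z_0$ and then to $z_N$, until the orbit reaches a long escape situation at some time $\tilde N_k$ for a suitable subinterval. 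The escape estimate following \eqref{firstp2} ensures that this happens in finitely many steps.

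Once in the escape situation, I apply the entire machinery of Section \ref{sink} verbatim to the new critical point: the $\lambda$-lemma (Lemma \ref{lambda2}) provides a further subinterval $\omega_k \subset \omega_{k-1}$ and a return time $N_k$ so that $z_{N_k}(\omega_k)$ is a $C^2(b)$ curve of length $\sim D_{N_k}^{-1}$ returning close to $z_0^{(k)}$ with horizontal alignment, and then the quadrilateral construction of the present Section \ref{sink} produces an invariant region $\mathcal{D}_{N_k}(a)$ around $z_0^{(k)}$ containing a unique sink (Lemma \ref{contraction}). The curve $\gamma_k(a)$ is the subsegment of $W^u(\hat z)$ of length $2\rho$ around $z_0^{(k)}$ used in that construction.

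The main obstacle is the disjointness claim $\mathcal{D}_{N_j}(a) \cap \mathcal{D}_{N_i}(a) = \emptyset$ for $i \neq j$. To handle this, I would observe that each region $\mathcal{D}_{N_k}(a)$ has diameter at most a constant times $D_{N_k}^{-1}$, whereas by Lemma \ref{lem:newcapture} the leaves of $W^u(\hat z)$ carrying distinct critical points $z_0^{(j)}$ are separated by vertical distances of order $(\hat d /(2a))^{n_j}$, and the choice of spacing parameter $d_n = D_{N_{k-1}}^{-\eta}$ with $\eta < 2$ ensures that for $b$ small enough, the trapping regions are vertically separated by a definite amount much larger than their diameters. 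The secondary point, needed to push $K$ large (and needed for Theorem \ref{B}), is to verify inductively that at each step $\omega_k$ retains positive one-dimensional Lebesgue measure; this follows from the parameter--phase comparison of Lemma \ref{par-phase-dist1} and Corollary \ref{cor:pardist}, which show that the parameter interval $\omega_k$ extracted at the escape situation is of size comparable to $D_{\tilde N_k}^{-1}$ within distortion bounds independent of $k$.
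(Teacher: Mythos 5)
Your induction scheme and base case match the paper, but the inductive step takes a shortcut that hides the real content. The paper does \emph{not} apply Section \ref{sink} ``verbatim'' to the new critical point $z_0^{(k)}$ in the original coordinates. Instead, once the orbit of $z_0^{(k)}$ reaches an escape situation whose image intersects $W^s(\hat z)$, the paper invokes the unfolding of the resulting homoclinic tangency (Section \ref{sec:tangency}) and the Palis--Takens/Mora--Viana renormalization to produce a \emph{new renormalized H\'enon-like family}; the $(k+1)$-st sink is then obtained by applying the Section \ref{sink} construction to that renormalized family, and the new escape situation for the next step comes from rerunning Section \ref{sec:capturing} there. Your proposal omits the tangency and the renormalization entirely, and this is a genuine gap: the return mechanism of Lemma \ref{returnlemma} (escape segment crossing $W^s(\hat z)$, $\lambda$-lemma stretching along $W^u(\hat z)$, capture by the stable foliation) delivers a $C^2(b)$ curve of length $\sim D_N^{-1}$ returning in horizontal position to the \emph{first} critical point $z_0$ on the first leg $G_1$. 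For $z_0^{(k)}$, which sits on a high-generation leaf at vertical distance only $D_{N_{k-1}}^{-\eta}$ from $\gamma_0$, you give no argument that the returning curve lands within $O(D_{N_k}^{-1})$ of that particular leaf with the required geometry; producing such a return is exactly what the renormalization near the tangency accomplishes, and asserting it ``verbatim'' begs the question.

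Two smaller points. First, your disjointness argument (diameters $\lesssim D_{N_k}^{-1}$ versus leaf spacing from Lemma \ref{lem:newcapture}) is reasonable in spirit, but in the paper's construction the successive regions $\mathcal D_{N_k}$ live near the successive tangency points in renormalized coordinates, so their disjointness is essentially automatic from the nesting of the renormalization domains rather than from a direct vertical-separation estimate; if you insist on working in the original coordinates you must also check that the orbit of $z_0^{(k)}$ up to its escape time avoids the earlier trapping regions $\mathcal D_{N_j}$, $j<k$ (the choice $\eta=\tfrac32+\epsilon$ making $d(z_0,z'_{2N})\sim D_N^{-\beta_2}\gg D_N^{-1}$ is what saves this at the first return, and you should say so). Second, the measure statement in your last paragraph is needed for Theorems \ref{A} and \ref{B} but is not part of Proposition \ref{mainprop}; in the paper it is handled through the linearity of the parameter renormalizations, not through Corollary \ref{cor:pardist} alone.
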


\begin{proof} We proceed by induction and the case of one sink appears
  in Section \ref{sink}.
Assume that we have already constructed $k$ sinks and that a
parameter interval $\omega^{(k)}$ corresponding to the critical point
$z^{(k+1)}_0$ is in escape situation and intersects $W^s(\hat{z})$. We
now have an unfolding of a homoclinic tangency as in Palis-Takens
\cite{PalisTakens} and \cite{MoraViana}. We can then do the
renormalization procedure associated to this unfolding as in these
papers and we obtain a new renormalized H\'enon-like family. This
allows us to create a new sink as in Section 3, and we obtain also a
new escape situation following the argument in Section \ref{sec:capturing}.
\end{proof}

{\it Proof of Theorem \ref{A}.} The proof is a small modification of that of
Proposition \ref{mainprop}. The only difference is that, at the time $k$, instead of construct a new sink one can create a strange attractor as in \cite{MoraViana} at the homoclinic unfolding.\demo

\medskip
{\it Proof of Theorem \ref{B}.} The proof is a minor modification of that of {Theorem \ref{A}}. The only difference is that instead of
switching to construction of a strange attractor after $k$ steps, we
continue to construct more and more sinks. We obviously obtain
Newhouse parameters in the limit. 
Note that the renormalizations take parameters of a specific H\'enon-like family linearly to new renormalized parameters of the corresponding H\'enon-like family. For each renormalization of order $k$, we get a set $A'_k$ of parameters in the renormalized H\'enon-like family of maps with $k$ sinks. We denote by $A_k$ the pullback of $A'_k$ containing parameters of the original H\'enon-like family. Consider now a non-empty closed subset of $A_k$, $B_k$ and denote by $B'_k$ the push-forward of $B_k$. We do at this point, another renormalization and we get a sequence of inclusions
$$
A_1\supset B_1\supset\dots\supset A_k\supset B_k\supset A_{k+1}\supset\dots.
$$ The intersection $$
\bigcap_{k=1}^{\infty}A_k
$$ is then non-empty and so is then the set of maps with infinitely many sinks.


\demo

{\it Proof of Theorem \ref{C}.} This result is a direct consequence of
Theorem \ref{A} and Theorem \ref{B}, since the H\'enon family is a
special example of a H\'enon-like family.\demo

\section{Construction of two coexisting strange attractors}
In this section we prove the existence of two strange attractors for a
parameter set of positive Lebesgue measure within the classical
H\'enon family.

We first outline the proof.
The idea is to find parameters with two coexisting homoclinic
tangencies. To do this we consider two very close critical points
which are in escape situation simultaneously. We must chose them very
carefully so that their images are at suitable distance at the escape
situation. To do this we have to chose carefully their initial distance and the time they spend in the hyperbolic region
outside of $(-\delta,\delta)$. We will create one true tangency for
the first critical point at the point $a_0$ and then we create a
tangency for a second  critical point but for a different parameter
value $a_0'$. Both critical points will have associated parameter sets
of positive one-dimensional Lebesgue measure with different strange
attractors. These parameter sets will intersect if the parameters with
the respective strange attractor are abundant at the respective points.  

\medskip
We return to the construction of the first critical point $z_0$ and
the corresponding long escape situation of Section
\ref{sec:long_escape_situation}. We fix $b<b_0$ and by  Lemma
\ref{lem:long_escape_sit} we see that there is a subinterval
$\tilde{\omega}_0$ such that $z_E(\tilde{\omega}_0)$ is in a long
escape situation.

We now construct a second critical point $\tilde{z}_0$. The construction
is similar to the corresponding one in Section \ref{sec:capturing}. The
difference is that $\tilde{z}_0$ will be chosen much closer to $z_0$
vertically than $\tilde{z}_0$ is to $z_0$ and its distance can be
chosen exponentially well spaced, see \eqref{eq:spacingofwuleaves}. 
From Lemma \ref{lem:newcapture}, choose $j$ and the corresponding $\hat{z}_0$ so that $j$ is the
minimal integer so that  for all
$a\in\tilde{\omega}_0$ at time $E$, $\tilde{z}_E$ is still bound to
$z_E$.




  \medskip
\subsection{Proof of Theorem \ref{D}}

We start with the construction of the first critical point $z_0$ and follow it until the first escape situation which appears at time $T_0$. 

Close to $z_0$ we have a number of critical points which are inter-spaced as follows.

\begin{prop}\label{critpts} Close to the critical point $z_0$ we 
have a sequence of critical points $\tilde{z}_0^{(j)}$ so that 
with $\lambda_1$ denoting the unstable eigenvalue, 
$$
|z_0-\tilde{z}_0^{(j)}|\leq C\left(\frac{b}{|\lambda_1|}\right)^j
$$
and
$$
C'\frac{b}{|\lambda_1|}|\tilde{z}_0^{(j)}-\tilde{z}_0^{(j-1)}|\leq |\tilde{z}_0^{(j+1)}-\tilde{z}_0^{(j)}|\leq C''\frac{b}{|\lambda_1|}|\tilde{z}_0^{(j)}-\tilde{z}_0^{(j-1)}|.
$$
\end{prop}
\begin{proof} The stable manifold at the fixed point $\hat{z}$
intersects the first leg of the unstable manifold in a homoclinic point $z_h$. Segments around $z_h$ are captured towards $\hat{z}$
and by the Lambda lemma these segments are accumulated on the unstable manifold, in particular at $z_0$. The behaviour is dominated by the behaviour at the fixed point $\hat{z}$ and is dominated by the stable eigenvalue $\lambda_2=b/\lambda_1$.
\end{proof}

We want to chose a $j$ so that $\tilde{z}_0^{(j)}$ is a suitable distance to $z_0$ so that at an escape time 
$E$ booth the point $z_0$ and $\tilde{z}_0^{(j)}$ escapes.

Consider a subinterval $\omega$ in the parameter space that escapes at time $T_0$. We can accomplish that $|z_0(\omega)| \sim\frac{1}{10}$. Now chose a second critical point  $\tilde{z}_0^{(j)}(a)$ for $a\in\omega$. Denote the initial distance between $z_0(a)$ and 
$\tilde{z}_0^{(j)}(a)$ for a fixed $a\in\omega$ by $d_j$. By Proposition \ref{critpts} 
it follows that 
$$
d_j=\mathcal{O}\left(\frac{b}{|\lambda_1|}\right)^j.
$$
It follows that at an escape  time $T_0$

$$
|z_{T_0}(a)-\tilde{z}_{T_0}^{(j)}(a)|\sim\left(\frac{b}{|\lambda_1|}\right)^j||W_{T_0}(a)||,
$$
where $\sim$ denotes that the quotients of the two sides are bounded above and below
with fixed constants.

We want to accomplish that at a suitable time $T_0+L$ there are simultaneous intersections of  $z_{T_0+L}(\omega')$  and  $\tilde{z}_{T_0}^{(j)}(\omega')$ with different legs of the stable manifold $W^s(\hat{z})$ for all $a\in{\omega'}$.
To achieve this we study the  distribution of the vertical segments of $W^s$. We consider the tent map
$$
y\mapsto 1- 2|y|
$$
which is conjugate to the full quadratic map
$$
1-2x^2
$$
The preimages of the fixed point $y=\frac{1}{3}$ of  the tent map are located in
$$
y_{k,\nu}=\frac{k}{3\cdot 2^\nu},\qquad
k=-3\cdot 2^\nu-1,\dots,3\cdot 2^\nu+1.
$$
These points correspond to $x_{k,\nu}=\sin \frac{\pi}{2} y_{k,\nu}$ and
by continuity this is approximately true for all $a$ close to 2. To
each $x_{\nu,k}$ there corresponds an almost vertical branch $\gamma_{k,\nu}$ of $W^s$
 and, by chosing $j$ and $L$ appropriately, we
will have the situation that $\tilde{z}_0^{(j)}(a_0)$ is located
between  $\gamma_{k,\nu}$ and $\gamma_{k+1,\nu}$ for suitable chosen
$k$ and $\nu$. This follows by the following argument. Suppose that
for a given orbit $z_{E+j}(\omega')$, $j\geq 0$, moves outside
$(-\delta,\delta)\times{\mathbb R}$. Note that $||Df|| \leq 5$. By
Lemma 4.5 in \cite{BC2} it follows that the slope $s_{T+j}$ of
$w_{T_0+j}$ satisfies $|s_{T_0+j}|\leq b/\delta$ if we restrict to a suitable
parameter interval $\omega'$. 

After restricting $\omega'$ further if necessary we can obtain that
for some $j=L$, say  $z_{T_0+j}(\omega')$, stretches across one
$\gamma_{k,\nu}$ and the stable leg of $W^s(\hat{z})$. Denote the
intersection points by $a_0'$  and $a_0''$ respectively.

We will need information about the local behavior at the image.
It follows from Propostion \ref{bigcurvature} that there are  points of tangencies for parameters
$\tilde{a}_0'$, $\tilde{a}_0'']$ close to  $a_0'$   respectively.





Let us consider the homoclinic tangencies that appears in Proposition
\ref{prop:tangency} for the parameters $as=\tilde{a}'_0$ and
 $as=\tilde{a}''_0$ at time $T_0+L$

Suppose that the common tangency occurs for a parameter $a_0$. We
consider the normalization argument in \cite{PalisTakens}. 

The curvature is given, by
$$
Q_1=\frac{|W_N^1(\rho)|}{|E_N^1|^2\left({1+\left(\frac{|B(\rho)||W_N^1|}{|E_N^1|^2}\right)^2}\right)^{3/2}}
$$

The maps $\varphi_\mu^N$ are written in coordinates
$$
(1+x,y)\mapsto (0,1)+(H_1(\mu,x,y),H_2(\mu,x,y))
$$

with

 \begin{align*}
 H_1(\mu,x,y)&=v\cdot x^2+\mu+w y+\tilde{H}_1(\mu,x,y)\\
 H_2(\mu,x,y)&=u\cdot y+\tilde{H}_2(\mu,x,y).
 \end{align*} 

They define $N$ dependent on
reparametrization of the parameter $\mu$ and a $\mu$-dependent change of
coordinates 
renormalizations. The parameter renormalization is given by
$$
\overline{\mu}=\sigma^{2N}\cdot \mu+w\cdot\kappa^N\cdot\sigma^{2N}-\sigma^N.
$$

In the renormalized coordinates the parameter interval is
$[\overline{\mu}'_0,2]$. We write
\begin{equation}\label{decomp'}
[\overline{\mu}'_0,2]=\bigcup_{r\geq r_0} J'_r=J'_{r,\ell}
\end{equation}

with $J'_{r,\ell}$ disjoint and $|J'_{r,\ell}|=\frac{1}{r^2}|J_r'|$.

We do a similar decompostion of  $[\overline{\mu}''_0,2]$.

\begin{equation}\label{decomp''}
[\overline{\mu}''_0,2]=\bigcup_{r\geq r_0} J''_r=J''_{r,\ell}
\end{equation}

To each of the decompostions \eqref{decomp'} and \eqref{decomp''}
we get two decompostions

\begin{equation}\label{adecomp'}
[\tilde{a}'_{0,0},\tilde{a}_0']=\bigcup_{r\geq r_0} \omega'_r=\omega'_{r,\ell},
\end{equation}

and

\begin{equation}\label{adecomp''}
[\tilde{a}''_{0,0},\tilde{a}_0'']=\bigcup_{r\geq r_0} \omega''_r=\omega''_{r,\ell}.
\end{equation}

There is a uniform distorsion bound for the parameter maps $a'\mapsto
\nu'$ and $a''\mapsto\nu''$

For each $\omega=\omega_{r,\ell}'$ we do the parameter selection to
create a strange attractor as in \cite{MoraViana}.

Let $n$ be an essential
free return time and let $E_n(z_0)$ be the set
kept at time $n$ where we take into accont the parameter deletions
because of the (BA) conditions and the large deviation estimate. 
By formula (1) in Section 12 in \cite{MoraViana} the measure of the deleted
set $\omega\setminus E_n(\omega)$ satisfies
\begin{equation}\label{eq:del-set1}
m(\omega\setminus E_n(\omega))\leq B_0 e^{-\alpha_0 n} m(\omega),
\end{equation}  
where $B_0$ and $\alpha_0$ depend on  $K,\alpha,\beta$ and $\delta$
but not on $N$ and $b$. 

We define
\begin{equation}\label{eq:del-set2}
E_n(\omega)=E_{n-1}(\omega)\setminus \left(\bigcup_{z_0}
  (\omega\setminus E_n(z_0))\right).
\end{equation}  
The number of critical points is
$$
\#C_n\leq 4 \left(\frac{K}{\rho_0}\right)^{\theta n}
$$
by Section 12 in \cite{MoraViana}.

This leads to
\begin{align*}
m(E_{n-1}\setminus E_n)&\leq 4B_0 4
                         \left(\frac{K}{\rho_0}\right)^{\theta n}e^{-\alpha_0n} m(\omega)\\
  = &4B_0\left(\left(\frac{K}{\rho_0}\right)^{\theta
      }e^{-\alpha_0}\right)^nm(\omega)\\
  &\leq 4B_0e^{-\alpha_0n/2}m(\omega)
\end{align*}
and
\begin{equation}\label{eq:deleted-par}
  m(\bigcup_{n\geq N}(E_{n-1}\setminus E_n))\leq \sum_{n\geq N}4B_0e^{-\alpha_0n/2}m(\omega). 
\end{equation}    
This means that for $N$ sufficiently large only a small proportion of $\omega$
will be deleted.

We now turn to the construction of simultanous attractors.

The first
attractor will be contructed as above and the second attractor will
be chosen corresponding to intervals $\omega''=\omega''_{r,\ell}$.

We now prove the coexistence of two attractors. To the critical point
$z_0$ there corresponds a parameter interval $\Omega_0=[{a}_{0,0}',{a}'_0]$
and to $\tilde{z}_0$ there corresponds the parameter interval
$\Omega_0''=[a_{0,0}'',a_0'']$. The subintervals $\omega'_{r',\ell'}$
and  $\omega''_{r'',\ell''}$ will have intersections for suitable
chosen $r',\ell',r'',\ell''$, even for a number of adjacent
$(r',\ell')$ and  $\omega''_{r'',\ell''}$. Because of the estimate 
\eqref{eq:deleted-par} the corresponding sets $E'_{r',\ell'}$ and  $E''_{r'',\ell''}$
will have a nonempty intersection.




We also have to verify that the two attractors are distict. This
follows since the attractors can be chosen to be arbitrarily well
localized and close to the different homoclinic tangencies. 

\medskip

We can now finish the proof of the main theorem of the section.

\medskip
{\it Proof of Theorem \ref{D}.} 
Consider a $b$ interval $(b_1,b_2)$, $b_2>b_1>0$, and $b_2$
sufficiently small. For each $b\in (b_1,b_2)$ there is a set $E_b$ of
positive Lebesgue measure so that there are two strange attractors and
the result follows by Fubinis' theorem.

\end{document}